\documentclass[12pt]{article}

\usepackage{amsmath, amsthm, amssymb, mathrsfs}
\usepackage{url}

\makeatletter \g@addto@macro\bfseries{\boldmath} \makeatother

\numberwithin{equation}{section}
\newtheorem{theo}{Theorem}[section]

\newtheorem{coro}[theo]{Corollary}
\newtheorem{lemm}[theo]{Lemma}

\def\aut{\mathrm{Aut}}
\def\atp{\mathrm{Atp}}
\def\ptp{\mathrm{Par}}
\def\grp{\mathcal{P}}
\def\id{\varepsilon}
\def\cyc{\mathscr{C}}

\renewcommand{\ge}{\geqslant}
\renewcommand{\le}{\leqslant}

\providecommand{\nequiv}{\not\equiv}

\def\tdot{{\cdot}}

\def\idl{\mathscr{D}}
\def\O{{\cal O}}
\def\sym{{\cal S}}
\def\Z{\mathbb{Z}}

\def\tref#1{Theorem~\ref{#1}}
\def\Tref#1{Table~\ref{#1}}
\def\lref#1{Lemma~\ref{#1}}
\def\cref#1{Corollary~\ref{#1}}

\def\sref#1{Section~\ref{#1}}

\DeclareMathOperator{\fix}{Fix}
\DeclareMathOperator{\lcm}{lcm}

\def\narrowcols{ \addtolength{\arraycolsep}{-1.5pt}\begin{footnotesize} }
\def\normalcols{ \end{footnotesize}\addtolength{\arraycolsep}{1.5pt} }
\def\verynarrowcols{ \addtolength{\arraycolsep}{-3pt}\renewcommand{\arraystretch}{0.8}\begin{scriptsize} }
\def\verynormalcols{ \end{scriptsize}\addtolength{\arraycolsep}{3pt}\renewcommand{\arraystretch}{1.25}}

\def\ttwo{\circ}
\def\tthree{\bullet}

\def\tspacer{{\vrule height 2.25ex width 0ex depth0ex}}
\def\bspacer{{\vrule height 0ex width 0ex depth1.0ex}}

\usepackage{enumitem}
\def\be{\begin{enumerate}[topsep=2pt,partopsep=0pt,itemsep=2pt,parsep=0pt]}

\begin{document}

\title{Autoparatopisms of Quasigroups and Latin Squares\footnote{Research supported by ARC grant FT100100153}}

\author{Mahamendige Jayama Lalani Mendis and Ian M.\ Wanless\\
\small School of Mathematical Sciences \\[-0.75ex]
\small Monash University\\[-0.75ex]
\small VIC 3800 Australia\\
\small {\tt\{jayama.mahamendige,ian.wanless\} @monash.edu} 
}

\date{}
\maketitle

\begin{abstract}
{\em Paratopism} is a well known action of the wreath product
$\sym_n\wr\sym_3$ on Latin squares of order $n$.  A paratopism that
maps a Latin square to itself is an {\em autoparatopism} of that Latin
square.  Let $\ptp(n)$ denote the set of paratopisms that are an
autoparatopism of at least one Latin square of order $n$.  We prove a
number of general properties of autoparatopisms. Applying these
results, we determine $\ptp(n)$ for $n\le17$. We also study the
proportion of all paratopisms that are in $\ptp(n)$ as
$n\rightarrow\infty$.

\medskip\noindent
AMS Subject Classifications: 05B15 05E18 20N05
\end{abstract}

\section{Introduction}\label{s:intro}

Symmetry is one of the most important concepts in mathematics.  Latin
squares are two dimensional analogues of permutations that play a
pivotal role in areas as diverse as group theory, finite geometry,
statistical designs and coding theory, as well as in recreations such
as sudoku \cite{DKIII,LM98}. In this paper we investigate a
fundamental question: What symmetries can a Latin square have? See
\cite{Stones} for a survey of earlier results related to this topic,
stretching all the way back to Euler's seminal work. In particular, we
note that symmetry has played a critical role in enumerations such as
\cite{EW,HulpkeKaskiOstergard,McKayMeynertMyrvold2007} and is helpful
for creating Latin squares with desirable properties (see e.g.~the
survey \cite{diagcyc}). For work on computing the symmetries of a
Latin square, see \cite{Kot12,Kot14,McKayMeynertMyrvold2007}.

A Latin square of order $n$ is an $n \times n$ array containing $n$
symbols such that each symbol appears once in each row and each
column. Typically we will take $[n]=\{1,2,\dots,n\}$ to be the set of
symbols and also index rows and columns by the elements of $[n]$. The
element in the $i^{\rm th}$ row and $j^{\rm th}$ column of a Latin
square $L$ is denoted by $L(i,j)$. The set
$O(L)=\{(i,j,L(i,j)):i,j\in[n]\}$ of $n^2$ ordered triples is called
the \emph{orthogonal array representation of $L$}. The elements of
$O(L)$ will be called the \emph{triples} or \emph{entries} of $L$. A
\emph{quasigroup} $Q$ is a non-empty set together with a binary
operation `$\star$' such that for all $a, b \in Q$, there exist unique
$x, y \in Q$ satisfying $a \star x=b$ and $y \star a=b$. The operation
table of any quasigroup is a Latin square, and every Latin square can
be obtained in this way.
A simple but useful example is
the cyclic square $\cyc_n$, defined by $\cyc_n(i,j)\equiv i+j-1\mod n$,
which corresponds to the cyclic group. 

Let $\theta=(\alpha, \beta, \gamma) \in \sym_n^3$, where $\sym_n$ is
the symmetric group acting on $[n]$. A new Latin square $L^{\theta}$
is obtained by permuting rows, columns and symbols of a Latin square
$L$ by $\alpha$, $\beta$ and $\gamma$ respectively. That is, $L^{\theta}$ is
the Latin square defined by
$L^{\theta}(i,j)=L(i\alpha^{-1}, j\beta^{-1})\gamma$,
where we adopt the convention that permutations act from the right.
The map $\theta$ is known as an \emph{isotopism} and
$L^{\theta}$ is said to be \emph{isotopic} to $L$. If $L^{\theta}=L$,
then $\theta$ is called an \emph{autotopism} of $L$. Observe that
$\theta$ is an autotopism of $L$ if and only if
$L(i,j)\gamma=L(i\alpha,j\beta)$ for all $i,j \in [n]$. If
$\theta=(\alpha, \alpha, \alpha)$ and $L^{\theta}=L$, then $\alpha$ is
said to be an \emph{automorphism} of $L$, because it is an automorphism
of the associated quasigroup.

Define $\grp_n$ to be the wreath product $\sym_n\wr\sym_3$. We denote
a typical element $\sigma\in\grp_n$ by $(\alpha,\beta,\gamma;\delta)$
where $\theta=(\alpha,\beta,\gamma)\in\sym_n^3$ and $\delta\in\sym_3$.
This element acts on a Latin square $L$ of order $n$ to produce 
another Latin square $L^\sigma$ of the same order, where
$O(L^\sigma)$ is obtained by applying the permutation $\delta$
to the triples in $O(L^\theta)$.
For example, if $(x,y,z)$ is a triple of $L$ then
\[
(x,y,z)\sigma=
\begin{cases}
(y\beta, x\alpha, z\gamma), \text{ if }\delta=(12), \\
(z\gamma, x\alpha, y\beta), \text{ if }\delta=(123). 
\end{cases}
\]
The map $\sigma$ is known as a \emph{paratopism}. If $L^{\sigma}=L$,
then $\sigma$ is called an \emph{autoparatopism} of $L$. Throughout this
paper we use $\id$ to denote the identity permutation of the appropriate
degree. The group of isotopisms is a normal subgroup of $\grp_n$, corresponding
to the case when $\delta=\id$.

The groups of all automorphisms, autotopisms and autoparatopisms of
$L$ will be denoted by $\aut(L)$, $\atp(L)$ and $\ptp(L)$
respectively. We use $\aut(n)$ (respectively $\atp(n)$, $\ptp(n)$) 
to denote the set
of all elements of $\sym_n$ (respectively $\sym_n^3$, $\grp_n$) which
are an automorphism (respectively autotopism, autoparatopism) of
at least one Latin square of order $n$.

The primary aim of this paper is to better understand $\ptp(n)$. 
We establish a number of necessary conditions for $\sigma$ to be in
$\ptp(n)$. We find, by computation, that these necessary conditions
are sufficient when $n \le 17$. The analogous task for $\atp(n)$ was 
carried out in \cite{Stones} and our approach follows a
similar direction to that paper, at least initially. 
For an earlier catalogue of $\atp(n)$ for $n\le 11$ see \cite{Fal12} 
and for related work on partial Latin squares, see \cite{Fal13}.

Our paper is structured as follows. 
In \sref{s:tools} we introduce the basic tools, notation and terminology
with which we will study autoparatopisms.
In \sref{s:basic} we provide a number of general conditions that
autoparatopisms necessarily satisfy.
In \sref{s:12} and \sref{s:123} we consider those autoparatopisms
$(\alpha,\beta,\gamma;\delta)$ for which $\delta$ is respectively
a 2-cycle and a 3-cycle. Finally, in \sref{s:end} we draw the different
strands together. We determine $\ptp(n)$ for $n\le17$ and provide some
nice contrasts between autotopisms and more general autoparatopisms.

\section{Some basic tools and terminology}\label{s:tools}

\subsection{Cycle structures}

Every $\alpha\in \sym_n$ decomposes into a product of disjoint cycles,
where we consider fixed points to be cycles of length $1$.  
We denote the set of fixed points of $\alpha$ by $\fix(\alpha)$.
We say $\alpha$ has the \emph{cycle structure} $c_1^{\lambda_1}\tdot
c_2^{\lambda_2}\cdots c_m^{\lambda_m}$ if there are $\lambda_i$ cycles
of length $c_i$ in the unique cycle decomposition of $\alpha$ and
$c_1>c_2>\cdots>c_m\ge 1$.  Hence $n=\sum c_i\lambda_i$. If
$\lambda_i=1$, we may write $c_i$ instead of $c_i^1$ in the cycle
structure. If $i$ is a point moved by a particular cycle $C$ then we
say that $i$ is in $C$ and write $i\in C$. We use $o(C)$ to denote the
length of a cycle $C$ (in other words, the size of its orbit). We
write $o_\alpha(i)=c$ to denote that $i$ is in some cycle $C$ of the
permutation $\alpha$ for which $o(C)=c$.

A permutation in $\sym_n$ (and our expression of it)
is \emph{canonical} if (i) it is written as
a product of disjoint cycles, including 1-cycles corresponding to
fixed points, (ii) the cycles are ordered according to their length,
starting with the longest cycles, (iii) each $c$-cycle is of the form
$(i,i+1,\ldots,i+c-1)$, with $i$ being referred to as the {\em leading
symbol} of the cycle, and (iv) if a cycle with leading symbol $i$ is
followed by a cycle with leading symbol $j$, then $i < j$. For
each possible cycle structure there is precisely one canonical 
permutation with that cycle structure and there is a unique way
to represent it as a product of disjoint cycles.

The task of understanding $\ptp(n)$ is substantially simplified by
the following two results from \cite{MW15}.

\begin{lemm}\label{l:conjauto}
  Suppose $\sigma_1$ and $\sigma_2$ are conjugate in $\grp_n$. Then
  $\sigma_1 \in \ptp(n)$ if and only if $\sigma_2 \in \ptp(n)$.
\end{lemm}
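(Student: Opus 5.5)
The plan is to show that if $\sigma_2=\tau^{-1}\sigma_1\tau$ for some $\tau\in\grp_n$ and $\sigma_1\in\ptp(L)$ for a Latin square $L$ of order $n$, then $\sigma_2\in\ptp(L^\tau)$. Since $\tau$ maps Latin squares of order $n$ to Latin squares of order $n$, this gives $\sigma_1\in\ptp(n)\Rightarrow\sigma_2\in\ptp(n)$. The converse then follows by symmetry, because $\sigma_1=\tau\sigma_2\tau^{-1}$ is also a conjugate.

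The first step is to set up the action carefully. As defined in the paper, each $\sigma\in\grp_n$ acts on the set $[n]^3$ of triples, with the isotopism $\theta$ applied first and then the coordinate permutation $\delta$. This is a right action of $\grp_n$ on $[n]^3$: $(x)(\sigma\tau)=((x)\sigma)\tau$, which is exactly how the wreath product multiplication is defined. It induces a right action on subsets, namely $O(L)\sigma=\{t\sigma:t\in O(L)\}$, and by definition $O(L^\sigma)=O(L)\sigma$. We must also check that $L^\sigma$ is again a Latin square. Permuting within coordinates clearly preserves the Latin property. Permuting coordinates does too, because $O(L)$ is a Latin square precisely when any two coordinates of its triples determine the third, and this condition is invariant under reordering the coordinates.

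The second step is the computation:
\[
O\bigl((L^\tau)^{\sigma_2}\bigr)=O(L)\tau\tau^{-1}\sigma_1\tau=O(L)\sigma_1\tau=O(L^{\sigma_1})\tau=O(L)\tau=O(L^\tau).
\]
Hence $(L^\tau)^{\sigma_2}=L^\tau$, so $\sigma_2$ is an autoparatopism of the Latin square $L^\tau$.

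The only real obstacle is confirming that the composition convention for $(\alpha,\beta,\gamma;\delta)$ matches the wreath product multiplication, so that the action is a genuine group action rather than an anti-action. If the convention turned out to give an anti-action, the same argument would go through using $\tau^{-1}$ in place of $\tau$. Either way, conjugacy classes are closed under this switch, so the conclusion is unaffected.
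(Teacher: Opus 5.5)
Your proof is correct, and it is the standard transport-of-structure argument: from $\sigma_1\in\ptp(L)$ you get $\tau^{-1}\sigma_1\tau\in\ptp(L^\tau)$ via $O(L)\tau\tau^{-1}\sigma_1\tau=O(L)\tau$, and you correctly check that $\grp_n$ acts on $[n]^3$ and preserves the Latin property. The paper gives no proof of its own here and simply imports the lemma from \cite{MW15}; your fallback of replacing $\tau$ by $\tau^{-1}$ under an anti-action convention is also valid, so nothing is missing.
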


We will use $\nu_1\sim\nu_2$ to denote that two
permutations $\nu_1,\nu_2$ are conjugate in $\sym_n$; in other words,
$\nu_1$ and $\nu_2$ have the same cycle structure. Note also that in
the next result we consider fixed points to be cycles (of length 1).

\begin{theo}\label{t:conj}
Suppose $\sigma_1=(\alpha_1,\alpha_2,\alpha_3;\delta_1)\in\grp_n$ and 
$\sigma_2=(\beta_1,\beta_2,\beta_3;\delta_2)\in\grp_n$. Then 
$\sigma_1$ is conjugate to $\sigma_2$ in $\grp_n$ if and only if
there is a length preserving
bijection $\eta$ from the cycles of $\delta_1$
to the cycles of $\delta_2$ with the following property:
If $\eta$ maps a cycle $(a_1\cdots a_k)$ to $(b_1\cdots b_k)$ 
then $\alpha_{a_1}\alpha_{a_2}\cdots\alpha_{a_k}\sim
\beta_{b_1}\beta_{b_2}\cdots\beta_{b_k}$.
\end{theo}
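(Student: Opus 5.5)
This is the standard description of conjugacy classes in a wreath product $\sym_n\wr\sym_3$. The plan is to fix the multiplication rule, identify a conjugacy invariant (the ``cycle products''), and then prove sufficiency by normalising representatives.

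First I fix the multiplication law. With the action above, a paratopism $(\alpha_1,\alpha_2,\alpha_3;\delta)$ sends a triple $(x_1,x_2,x_3)$ to the triple whose $(i\delta)$-th coordinate is $x_i\alpha_i$. Composing two such maps gives
\[
(\alpha_1,\alpha_2,\alpha_3;\delta)(\beta_1,\beta_2,\beta_3;\delta')=(\alpha_1\beta_{1\delta},\alpha_2\beta_{2\delta},\alpha_3\beta_{3\delta};\delta\delta').
\]
The proof rests on the following invariant. For each cycle $(a_1\cdots a_k)$ of $\delta$, with $a_{j+1}=a_j\delta$, define the cycle product $\alpha_{a_1}\alpha_{a_2}\cdots\alpha_{a_k}$. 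Iterating the law shows that $\sigma^k$ has first coordinate in position $a_1$ equal to exactly this product. Changing the starting point $a_1$ of the cycle replaces the product by a cyclic rotation, which is a conjugate in $\sym_n$. So the conjugacy class of the cycle product attached to each cycle of $\delta$ is well defined.

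For necessity, suppose $\tau=(\gamma_1,\gamma_2,\gamma_3;\pi)$ and $\sigma_2=\tau^{-1}\sigma_1\tau$. Projecting to $\sym_3$ gives $\delta_2=\pi^{-1}\delta_1\pi$, so $\pi$ maps cycles of $\delta_1$ to cycles of $\delta_2$ preserving length; I take $\eta$ to be this map. Next I compute the coordinates of $\tau^{-1}\sigma_1\tau$ from the law. Each coordinate of the conjugate has the form $\gamma_{a}^{-1}\alpha_{a}\gamma_{a\delta_1}$, re-indexed via $\pi$. Multiplying these along a cycle makes the $\gamma$'s telescope, leaving $\gamma_{a_1}^{-1}(\alpha_{a_1}\cdots\alpha_{a_k})\gamma_{a_1}$. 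This is conjugate to the cycle product for $\sigma_1$, as required.

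For sufficiency, I first conjugate by $(\id,\id,\id;\pi)$, where $\pi$ realises $\eta$ (possible since $\eta$ is length preserving, hence the cycle types of $\delta_1,\delta_2$ agree). This reduces to the case $\delta_1=\delta_2=\delta$ with $\eta$ the identity, and I may assume each cycle is written from a chosen start $a_1$. Then I normalise $\sigma_1$ by conjugating with a pure isotopism $(\gamma_1,\gamma_2,\gamma_3;\id)$. Along a cycle $(a_1\cdots a_k)$, choose $\gamma_{a_1}=\id$, and choose $\gamma_{a_2},\dots,\gamma_{a_k}$ successively so that the new coordinates in positions $a_1,\dots,a_{k-1}$ become $\id$. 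The telescoping identity forces the coordinate in position $a_k$ to be the cycle product. Thus every element is conjugate to one with $\id$ in all positions of each cycle except the last, which holds $\alpha_{a_1}\cdots\alpha_{a_k}$.

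To finish, take two such normal forms whose last coordinates are conjugate, say by $\rho$. Conjugating by the isotopism with $\rho$ in every position of that cycle keeps the $\id$'s as $\id$'s and conjugates the last entry by $\rho$. Hence the normal forms coincide and $\sigma_1\sim\sigma_2$. The main obstacle is bookkeeping: getting the indexing of the multiplication law (left versus right action, $i\delta$ versus $i\delta^{-1}$) consistent so that the telescoping goes through. I would fix it once by checking against the $(12)$ and $(123)$ examples above.
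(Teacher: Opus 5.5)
Your proof is correct. The paper does not prove this theorem itself; it quotes it from \cite{MW15}, so there is no in-paper argument to compare with. What you give is the standard proof of the conjugacy criterion for wreath products: you show the cycle products are invariant by telescoping, then reach a normal form with $\id$ in every position of each cycle of $\delta$ except the last.

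Your bookkeeping is right. Your composition law reproduces the paper's $(x,y,z)\sigma=(z\gamma,x\alpha,y\beta)$ for $\delta=(123)$. It also gives $\sigma^3=(\alpha\beta\gamma,\beta\gamma\alpha,\gamma\alpha\beta;\id)$, exactly as used in the proof of Corollary~\ref{cy:pow2para}. This check matters, because for a $3$-cycle the reversed product need not be conjugate to the correct one. Take $\gamma=(\alpha\beta)^{-1}$ with $\alpha\beta\ne\beta\alpha$: then $\alpha\beta\gamma=\id$, but $\gamma\beta\alpha=\beta^{-1}\alpha^{-1}\beta\alpha\ne\id$.

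The points you leave implicit are harmless. The permutation realising $\eta$ is $\pi\colon a_j\mapsto b_j$, which does satisfy $\pi^{-1}\delta_1\pi=\delta_2$. In the last step you need a separate $\rho_C$ for each cycle $C$ of $\delta$. That is fine: conjugating by an isotopism changes the entry at position $a$ to $\gamma_a^{-1}\alpha_a\gamma_{a\delta}$, which involves only positions in the same cycle, so the cycles can be normalised independently.
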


It follows from the above two results that any autoparatopism
$(\alpha,\beta,\gamma;\delta)$ is conjugate to an autoparatopism of
the form $(\alpha,\beta,\gamma;\id)$, $(\id,\beta,\gamma;(12))$ or
$(\id,\id,\gamma;(123))$. The first of these possibilities has
been well studied in \cite{Stones}, so we will concentrate mostly
on the second and third possibilities. Moreover, in these cases
the only salient consideration is the cycle structures of $\beta$
and $\gamma$. For this reason, we will often assume without
loss of generality that these permutations are canonical.

\subsection{Cell orbits}

We now discuss a notion that proved useful for studying $\atp(n)$ in
\cite{Stones}. In that work, the term {\em cell orbit} is used to
describe the set of cells of a Latin square in an orbit induced by an
autotopism. The concept is a useful one because the cell orbit is
determined by the autotopism and is independent of the contents of the
cells. The same property holds for autoparatopisms of the form
$(\alpha,\beta,\gamma;(12))$, so we also discuss cell orbits in this
case. Formally, suppose that
$\sigma=(\alpha,\beta,\gamma;(12))\in\ptp(L)$ for some Latin square
$L$. Then a cell orbit of $\sigma$ on $L$ is the projection onto the
first two coordinates of an orbit under the action of $\sigma$ on
$O(L)$. In most cases, $\sigma$ and $L$ will be implied by context
and we simply refer to ``cell orbits''.

\begin{lemm}\label{l:orb12}
  Suppose that $\sigma=(\id,\beta,\gamma;(12)) \in \ptp(L)$ for a
  Latin square $L$, where $\beta=\beta_1\beta_2\cdots\beta_p$ is a
  canonical permutation. Let $d_i=o_\beta(\beta_i)$ for $1\le i\le p$.
  Define $M_{ij}$ to be the $d_i\times d_j$ block of $L$ consisting of
  rows with indices in $\beta_i$ and columns with indices in $\beta_j$.

\be
\item[(i)] In each block $M_{tt}$ where $d_t$ is odd, there is one
  cell orbit with length $d_t$ and there are $(d_t -1) /2$ cell orbits
  with length $2d_t$.

\item[(ii)] In each block $M_{tt}$ where $d_t$ is even there are
  $d_t/2$ cell orbits with length $2d_t$.

\item[(iii)] If $s\ne t$ then there are
  $\gcd(d_s,d_t)$ cell orbits through $M_{st} \cup M_{ts}$, each with
  length $2\lcm(d_s,d_t)$. One half of each cell orbit lies in $M_{st}$
  and the other half lies in $M_{ts}$. 
\end{enumerate}
\end{lemm}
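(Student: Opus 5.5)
The plan is to compute how $\sigma$ acts on cells, since the cell orbits depend only on $\sigma$ and not on the contents of $L$. A triple $(x,y,z)$ is sent by $\sigma=(\id,\beta,\gamma;(12))$ to $(y\beta,x,z\gamma)$. So on cells the induced map is $\tau:(x,y)\mapsto(y\beta,x)$. Applying it twice gives $\tau^2:(x,y)\mapsto(x\beta,y\beta)$. Hence $\tau^{2k}(x,y)=(x\beta^k,y\beta^k)$ and $\tau^{2k+1}(x,y)=(y\beta^{k+1},x\beta^k)$. The cell orbits are therefore exactly the orbits of $\langle\tau\rangle$ on $[n]^2$, and I will count them block by block.

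First I will note that $\tau$ maps $M_{st}$ into $M_{ts}$, because $y\beta$ lies in the same cycle as $y$. So $M_{st}\cup M_{ts}$ is a union of cell orbits, and when $s\ne t$ each orbit alternates between the two blocks. That gives half of each orbit in $M_{st}$ and half in $M_{ts}$. The even powers $\tau^{2k}$ act on $M_{st}$ as the diagonal action of $\beta$ on $\beta_s\times\beta_t$. That action has orbits of length $\lcm(d_s,d_t)$, and there are $d_sd_t/\lcm(d_s,d_t)=\gcd(d_s,d_t)$ of them. An odd power of $\tau$ never fixes a cell of $M_{st}$ when $s\ne t$, since it moves the cell to another block. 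So each $\tau$-orbit meets $M_{st}$ in exactly one $\tau^2$-orbit and has length $2\lcm(d_s,d_t)$. There are $2d_sd_t/(2\lcm)=\gcd(d_s,d_t)$ such orbits, which proves (iii).

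For a diagonal block $M_{tt}$ I will identify the cycle $\beta_t$ with $\Z_d$, where $d=d_t$, so that $\beta$ acts as $i\mapsto i+1$. Then $\tau(x,y)=(y+1,x)$. It is convenient to use the invariant $\Delta=y-x$, taken mod $d$. We have $\tau^2$ preserving $\Delta$, while $\tau$ sends $\Delta$ to $x-y-1=-\Delta-1$. Each $\tau^2$-orbit is the set of the $d$ cells with a fixed $\Delta$. So a $\tau$-orbit is the union of the classes $\Delta$ and $-\Delta-1$, and it has length $d$ or $2d$ according to whether these two classes coincide. They coincide exactly when $2\Delta\equiv-1\pmod d$.

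If $d$ is odd, the congruence $2\Delta\equiv-1$ has exactly one solution, namely $\Delta=(d-1)/2$. That gives one orbit of length $d$, and the remaining $d-1$ classes pair up into $(d-1)/2$ orbits of length $2d$, which is (i). If $d$ is even, there is no solution, so all $d$ classes pair up into $d/2$ orbits of length $2d$, which is (ii). The only step needing care is this diagonal-block count, and in particular the invariant $\Delta$. The rest is routine bookkeeping, and a quick check that the lengths add up to $d^2$ and $2d_sd_t$ confirms the counts.
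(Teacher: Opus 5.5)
Your proof is correct and takes essentially the same route as the paper. The paper also splits each orbit into the even-power part (its set $A=\{(i\beta^l,j\beta^l)\}$, your $\tau^{2k}$) and the odd-power part (its set $B=\{(j\beta^m,i\beta^{m-1})\}$, your $\tau^{2k+1}$). It notes these lie in different blocks when $s\ne t$, and in a diagonal block they coincide exactly for the cells $(i,i\beta^{(d_t-1)/2})$ when $d_t$ is odd. Your difference invariant $\Delta$ and the explicit orbit counts make precise the ``coincide or disjoint'' dichotomy and the counting that the paper leaves implicit.
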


\begin{proof}
  Suppose $1\le s,t\le p$.
  The orbit of the cell $(i,j)$ in the block $M_{st}$ 
  can be divided into two subsets $A$ and $B$, where
\begin{align*}
  A &= \{(i\beta^l, j\beta^l): 1 \le l\le\lcm(d_s,d_t)\},\\
  B &= \{(j\beta^m, i\beta^{m-1}): 1\le m\le\lcm(d_s,d_t)\}.
\end{align*}
Observe that $|A|=\lcm(d_s,d_t)=|B|$.

First suppose that $s=t$. Then $A$, $B$ are subsets of the same
block, $M_{tt}$. Let $j=i\beta^q$ for some $q \in \{0,1,\ldots,
d_t-1\}$. Then $j\beta^m=i$
when $m=d_t-q$. So $i\beta^{m-1}=i\beta^{d_t-q-1}=j$ if
$d_t-q-1=q$, that is, when $q=(d_t-1)/2$. Such an integer
$q$ exists if and only if $d_t$ is odd. Hence $A$ and $B$ coincide in
the orbit of the cell $(i, i\beta^{(d_t-1)/2})$, so the length
of that orbit is $d_t$. All cells in 
$\{(i,i\beta^{(d_t-1)/2}):i\in\beta_t\}$ belong to the same cell orbit. 
The length of all other cell orbits is $2d_t$ when
$d_t$ is odd because $A$ and $B$ are disjoint. Similarly,
when $d_t$ is even, all cell orbits of $M_{tt}$ have length $2d_t$.

Finally, suppose that $s\ne t$. 
Then $A\subseteq M_{st}$ and $B\subseteq M_{ts}$
and hence the length of each cell orbit is $2\lcm(d_s, d_t)$.
\end{proof}

Example: Let $\sigma=(\id, \beta, \gamma;(12)) \in \grp_n$ where
$\beta$ is the canonical permutation with cycle structure $5^2$. Three
different cell orbits of $\sigma$ are represented by $\star$, $\ttwo$
and $\tthree$ in the following diagram.
\[
\verynarrowcols
        \begin{array}{|ccccc|ccccc|}
        \hline
         \ttwo&\cdot&\star&\cdot&\ttwo&\cdot&\cdot&\cdot&\tthree&\cdot\\
        \ttwo&\ttwo&\cdot&\star&\cdot&\cdot&\cdot&\cdot&\cdot&\tthree\\
        \cdot&\ttwo&\ttwo&\cdot&\star&\tthree&\cdot&\cdot&\cdot&\cdot\\
        \star&\cdot&\ttwo&\ttwo&\cdot&\cdot&\tthree&\cdot&\cdot&\cdot\\
        \cdot&\star&\cdot&\ttwo&\ttwo&\cdot&\cdot&\tthree&\cdot&\cdot\\
        \hline
        \cdot&\tthree&\cdot&\cdot&\cdot&\cdot&\cdot&\cdot&\cdot&\cdot\\
        \cdot&\cdot&\tthree&\cdot&\cdot&\cdot&\cdot&\cdot&\cdot&\cdot\\
        \cdot&\cdot&\cdot&\tthree&\cdot&\cdot&\cdot&\cdot&\cdot&\cdot\\
        \cdot&\cdot&\cdot&\cdot&\tthree&\cdot&\cdot&\cdot&\cdot&\cdot\\
        \tthree&\cdot&\cdot&\cdot&\cdot&\cdot&\cdot&\cdot&\cdot&\cdot\\
        \hline
    \end{array}\ .
\endgroup\]

Next we consider autoparatopisms of the form
$(\id, \id, \gamma;(123))$.
In this case the term ``cell orbit'' is no longer appropriate since
all three coordinates in a triple affect its orbit. Hence we will
simply refer to orbits.

\begin{lemm}\label{l:orb123}
Suppose that $\sigma=(\id,\id,\gamma;(123)) \in \ptp(L)$ for a
Latin square $L$, where $\gamma=\gamma_1\gamma_2\cdots\gamma_p$ is a
canonical permutation. Let $d_i=o_\gamma(\gamma_i)$ for $1\le i\le p$. 
Define $M_{ij}$ to be the $d_i\times d_j$ block of $L$ consisting of rows 
with indices in $\gamma_i$ and columns with indices in $\gamma_j$.

Suppose $i\in\gamma_a$, $j\in\gamma_b$, $k\in\gamma_c$ and let $\O$ be
the orbit of the triple $(i,j,k)$ under $\sigma$.
If $\big|\{a,b,c\}\big| > 1$ then $\O$
has length $3\lcm(d_a,d_b,d_c)$, divided equally between three different blocks 
$M_{ab}$, $M_{ca}$, and $M_{bc}$. If $a=b=c$, then $\O$
lies entirely within $M_{aa}$. All such 
orbits have length $3d_a$ except that there may be one orbit of
length $d_a$ when $3\nmid d_a$.
\end{lemm}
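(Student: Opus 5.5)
We compute the action of $\sigma$ explicitly. By the formula for $\delta=(123)$, a triple $(x,y,z)$ is mapped to $(z\gamma,x,y)$. Applying $\sigma$ three times to $(i,j,k)$ gives
\[
(i,j,k)\mapsto(k\gamma,i,j)\mapsto(j\gamma,k\gamma,i)\mapsto(i\gamma,j\gamma,k\gamma),
\]
so $\sigma^3$ acts as $\gamma$ on every coordinate. Hence $\O$ is the union of the three sets
\begin{align*}
X&=\{(i\gamma^l,j\gamma^l,k\gamma^l)\},\\
Y&=\{(k\gamma^{l+1},i\gamma^l,j\gamma^l)\},\\
Z&=\{(j\gamma^{l+1},k\gamma^{l+1},i\gamma^l)\},
\end{align*}
with $l$ ranging over $\Z$. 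Each set has size $\lcm(d_a,d_b,d_c)$, because $(i\gamma^l,j\gamma^l,k\gamma^l)$ returns to $(i,j,k)$ exactly when $l$ is a multiple of that lcm, and the same holds for $Y$ and $Z$. The cells of $X$, $Y$, $Z$ lie in the blocks $M_{ab}$, $M_{ca}$, $M_{bc}$ respectively.

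\textbf{Case $|\{a,b,c\}|>1$.} We first check that the three blocks are pairwise distinct as ordered pairs. For example, $M_{ab}=M_{ca}$ forces $a=c$ and $b=a$, which gives $a=b=c$. The other coincidences are handled the same way. So $X$, $Y$, $Z$ are disjoint. Within $X$ the triples are distinct for $0\le l<\lcm(d_a,d_b,d_c)$, and likewise in $Y$ and $Z$. Therefore $|\O|=3\lcm(d_a,d_b,d_c)$, split equally among the three blocks.

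\textbf{Case $a=b=c$.} Write $d=d_a$. All three sets lie in $M_{aa}$, and $\sigma^3$ acts with order exactly $d$ on $\O$. The orbit length of $(i,j,k)$ under $\langle\sigma\rangle$ is the least $m$ with $(i,j,k)\sigma^m=(i,j,k)$.
\begin{itemize}
\item If $3\mid m$, then $\gamma^{m/3}$ fixes $i$, so $d\mid m/3$ and $m=3d$.
\item If $3\nmid m$, then $\sigma^m$ fixes $(i,j,k)$, and so does $\sigma^{\gcd(m,3d)}$. Since $\gcd(m,3d)$ is not divisible by $3$, the triple is fixed by $\sigma^{3d}$ and by $\sigma^{m}$, hence by $\sigma^{g}$ where $g=\gcd(m,3d)$. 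Also $m$ divides the order $3d$ of $\sigma$ restricted to $M_{aa}$, so $m\mid 3d$ and $3\nmid m$ give $3\nmid d$ and $m\mid d$. Fixity under $\sigma^{m}$ means $(i,j,k)$ equals an element of $Y$ or $Z$, say $(i,j,k)=(k\gamma^{t+1},i\gamma^t,j\gamma^t)$. This gives $j=i\gamma^t$, $k=i\gamma^{2t}$ and $i=i\gamma^{3t+1}$, so $3t+1\equiv0\pmod d$; the $Z$ case is analogous with $3t+2\equiv0$.
\end{itemize}
So a short orbit requires $3\nmid d$ and consists of triples $(i,i\gamma^t,i\gamma^{2t})$ with $t$ the solution of $3t\equiv -1\pmod d$, or the corresponding $Z$-type triples. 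These turn out to be the same set, checked by direct substitution. Such a set has length $d$, since $m$ is a multiple of the order $d$ of $\gamma$ on $i$.

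It remains to show that at most one such orbit occurs in $L$, and this is where the hypothesis $\sigma\in\ptp(L)$ is used. The short-orbit triples are determined by $i$, so the candidate triples $(i,i\gamma^t,i\gamma^{2t})$ for $i\in\gamma_a$ form one candidate orbit. For each of the two residues $3t\equiv-1$ and $3t\equiv -2 \pmod d$ there is one orbit, but the two orbits give the same set of cells. In particular, the first coordinates $i$ and second coordinates $i\gamma^{t}$ of one candidate determine cells that coincide with those of the other. Since $L$ has one symbol per cell, at most one of them lies in $O(L)$. Verifying this coincidence of cells while the third coordinates differ is the step I expect to be the main obstacle. I would settle it by comparing the $Y$-type solution $t$ with the $Z$-type solution $t'$ modulo $d$.
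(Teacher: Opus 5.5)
Your setup and the case $|\{a,b,c\}|>1$ are correct and match the paper. The paper splits $\O$ into the same three $\sigma^3$-orbits, lying in $M_{ab}$, $M_{ca}$ and $M_{bc}$.

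The problem is your last paragraph, where you try to show that $M_{aa}$ contains at most one short orbit. You claim there that the candidates for $3t\equiv-1$ and $3t'\equiv-2\pmod d$ occupy the same cells but carry different symbols. You then plan to discard one of them because $L$ has one symbol per cell. That step would fail, because the two candidates do not differ at all. From $3t\equiv-1$ the $Z$-type parameter is $t'\equiv 2t$, and then $2t'+1=t+(3t+1)\equiv t\pmod d$. So the $Z$-type triple $(i,i\gamma^{2t'+1},i\gamma^{t'})$ is literally $(i,i\gamma^t,i\gamma^{2t})$. This is your own earlier, correct remark that the two descriptions give the same set, which the final paragraph then contradicts. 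A smaller slip: $m\mid 3d$ and $3\nmid m$ give only $m\mid d$. The conclusion $3\nmid d$ comes instead from $d\mid 3t+1$ (or $d\mid 3t'+2$).

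The repair is shorter than what you planned. Every triple with all coordinates in $\gamma_a$ whose $\sigma$-orbit has length less than $3d$ lies in $S=\{(i,i\gamma^t,i\gamma^{2t}):i\in\gamma_a\}$. This set $S$ is a single orbit of length $d$. Indeed, $(i,i\gamma^t,i\gamma^{2t})\sigma=(i\gamma^{2t+1},i,i\gamma^t)=(i',i'\gamma^t,i'\gamma^{2t})$ with $i'=i\gamma^{2t+1}$, since $-2t-1\equiv t\pmod d$. Also $\sigma^3$ carries the first coordinate around all of $\gamma_a$.

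So there is exactly one candidate short orbit in $M_{aa}$, whether or not its triples belong to $O(L)$. The hypothesis $\sigma\in\ptp(L)$ and the Latin property play no role in the ``at most one'' claim. This is precisely the paper's argument: it exhibits the unique candidate $(i,i\gamma^{m-1},i\gamma^{-m})$ with $d\mid 3m-2$, which is your $S$ with $t=m-1$.
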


\begin{proof}
  Let $(i, j, k)$ be a triple of $L$ such that $i, j, k$ are from
  cycles $\gamma_a, \gamma_b, \gamma_c$ respectively. The orbit
  of the triple $(i, j,k)$ of $L$ can be divided into $3$ subsets
  $A,B,C$, where
\begin{align*}
 A &= \{(i\gamma^l, j\gamma^l, k\gamma^l):1 \le l \le \lcm(d_a, d_b)\},
\\
 B &= \{(k\gamma^m, i\gamma^{m-1}, j\gamma^{m-1}):1 \le m \le \lcm(d_a, d_b)\},
\\
 C &= \{(j\gamma^r, k\gamma^r, i\gamma^{r-1}):1 \le r \le \lcm(d_a, d_b)\}.
\end{align*}
Observe that $|A|=|B|=|C|=\lcm(d_a,d_b)$ and
$A\subseteq M_{ab}$, 
$B\subseteq M_{ca}$, $C\subseteq M_{bc}$.
If $\big|\{a,b,c\}\big|>1$ then $M_{ab}$, $M_{ca}$ and $M_{bc}$ are
three different blocks. In this case 
the length of the orbit is $3\lcm(d_a,d_b)$.

Now suppose $i,j,k$ are from the same cycle of $\gamma$ and let the length
of that cycle be $d$. In this case $M_{ab}=M_{ca}=M_{bc}$. 
Viewing $A$, $B$, $C$ as orbits of $\sigma^3$ it becomes clear that either
they all coincide or they are pairwise disjoint.
If there exists an integer $m$ satisfying
$k\gamma^m=i$, $i\gamma^{m-1}=j$ and
$j\gamma^{m-1}=k$ then $i\gamma^{3m-2}=i$. If $3\mid d$ then $d\nmid 3m-2$
so $A,B,C$ are disjoint. If $3\nmid d$ there is a unique $m$ in the range
$1\le m\le d$ such that $d\mid 3m-2$. For that value of $m$, if
$(i,i\gamma^{m-1},i\gamma^{-m})$ is a triple of $L$ it will have an orbit
of length $d$. This orbit will contain all triples of the form 
$(i,i\gamma^{m-1},i\gamma^{-m})$ where $i\in\gamma_a$.
\end{proof} 

Example: Suppose $\sigma=(\id, \id, \gamma;(123)) \in \ptp(L)$ where
the cycle structure of $\gamma$ is $5^2\tdot1$. Two orbits, having 
respective lengths $5$ and $15$, are shown in the following figure.
\[
\narrowcols
        \begin{array}{|ccccc|ccccc|c|}
        \hline
        \cdot&\cdot&\cdot&2&\cdot&\cdot&\cdot&\cdot&\cdot&\cdot&\cdot\\
        \cdot&\cdot&\cdot&\cdot&3&\cdot&\cdot&\cdot&\cdot&\cdot&\cdot\\
        4&\cdot&\cdot&\cdot&\cdot&\cdot&\cdot&\cdot&\cdot&\cdot&\cdot\\
        \cdot&5&\cdot&\cdot&\cdot&\cdot&\cdot&\cdot&\cdot&\cdot&\cdot\\
        \cdot&\cdot&1&\cdot&\cdot&\cdot&\cdot&\cdot&\cdot&\cdot&\cdot\\
        \hline
        \cdot&\cdot&\cdot&\cdot&\cdot&\cdot&\cdot&\cdot&\cdot&\cdot&\cdot\\
        \cdot&\cdot&\cdot&\cdot&\cdot&\cdot&\cdot&\cdot&\cdot&\cdot&\cdot\\
        \cdot&\cdot&\cdot&\cdot&\cdot&\cdot&\cdot&\cdot&\cdot&\cdot&\cdot\\
        \cdot&\cdot&\cdot&\cdot&\cdot&\cdot&\cdot&\cdot&\cdot&\cdot&\cdot\\
        \cdot&\cdot&\cdot&\cdot&\cdot&\cdot&\cdot&\cdot&\cdot&\cdot&\cdot\\
        \hline 
        \cdot&\cdot&\cdot&\cdot&\cdot&\cdot&\cdot&\cdot&\cdot&\cdot&\cdot\\
        \hline
    \end{array}
\quad\quad
\begin{array}{|ccccc|ccccc|c|}
        \hline
        \cdot&\cdot&6&\cdot&\cdot&\cdot&\cdot&\cdot&3&\cdot&\cdot\\
        \cdot&\cdot&\cdot&7&\cdot&\cdot&\cdot&\cdot&\cdot&4&\cdot\\
        \cdot&\cdot&\cdot&\cdot&8&5&\cdot&\cdot&\cdot&\cdot&\cdot\\
        9&\cdot&\cdot&\cdot&\cdot&\cdot&1&\cdot&\cdot&\cdot&\cdot\\
        \cdot&10&\cdot&\cdot&\cdot&\cdot&\cdot&2&\cdot&\cdot&\cdot\\
        \hline
        \cdot&\cdot&\cdot&\cdot&2&\cdot&\cdot&\cdot&\cdot&\cdot&\cdot\\
        3&\cdot&\cdot&\cdot&\cdot&\cdot&\cdot&\cdot&\cdot&\cdot&\cdot\\
        \cdot&4&\cdot&\cdot&\cdot&\cdot&\cdot&\cdot&\cdot&\cdot&\cdot\\
        \cdot&\cdot&5&\cdot&\cdot&\cdot&\cdot&\cdot&\cdot&\cdot&\cdot\\
        \cdot&\cdot&\cdot&1&\cdot&\cdot&\cdot&\cdot&\cdot&\cdot&\cdot\\
        \hline 
        \cdot&\cdot&\cdot&\cdot&\cdot&\cdot&\cdot&\cdot&\cdot&\cdot&\cdot\\
        \hline
    \end{array}\ .
\endgroup\]

We call the cell orbits of length $d_t$ in \lref{l:orb12}(i) 
and the orbits of length $d_a$ in \lref{l:orb123}
{\em short orbits}. They will play a crucial
role in our subsequent work. 

\subsection{Contours}\label{Sc:Contours}

A contour is a tool introduced in \cite{Stones} to help define a
Latin square with a particular autotopism. 
A {\em contour} is a partial
Latin square containing exactly one filled cell in each cell orbit.
The whole
Latin square can then be recovered from knowledge of the autotopism.
We will find it convenient to employ this idea for autoparatopisms of
the form $(\alpha,\beta,\gamma;(12))$.
It is entirely routine to check that a contour works, that is,
produces a Latin square with the desired autoparatopism.  In all
cases, we leave this somewhat tedious checking to the reader when we
describe a contour.

When constructing contours we always assume that permutations are canonical.
Suppose that $\gamma=\gamma_1\cdots\gamma_c$.
For $1\le k\le c$, let $t_k=1+\sum_{j<k}o(\gamma_j)$ be the leading
symbol of $\gamma_k$. If $o(\gamma_k)>1$ we specify an orbit containing
symbols from $\gamma_k$ with the notation $C(i,j)=t_k$, which means that
the contour contains symbol $t_k$ in cell $(i,j)$. If $o(\gamma_k)=1$ we
instead write $C(i,j)=\infty_{k'}$ where $k'$ is used to index 
$\fix(\gamma)$. 

\section{General conditions}\label{s:basic}

In this section we determine some elementary necessary conditions 
that autoparatopisms must satisfy. We start by adapting several
results from \cite{Stones}, beginning with this lemma from that paper.

\begin{lemm}\label{l:lcmatp}
Let $\theta=(\alpha,\beta,\gamma)$ be an autotopism of a
Latin square $L$. If $o_\alpha(i)=a$ and $o_\beta(j)=b$, then
$o_\gamma(L(i,j))=c$, where
$\lcm(a,b)=\lcm(b,c)=\lcm(a,c)=\lcm(a,b,c)$.
\end{lemm}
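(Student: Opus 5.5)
The plan is to compare the periods of the three coordinates of an entry under powers of the autotopism. Let $k=L(i,j)$ with $o_\gamma(k)=c$. Since $\theta$ is an autotopism, $L(i,j)\gamma^m=L(i\alpha^m,j\beta^m)$ for every integer $m\ge0$. This follows by induction from the characterisation $L(i,j)\gamma=L(i\alpha,j\beta)$ stated in the introduction. The whole argument rests on this identity together with the Latin property: in any fixed row, each symbol appears in exactly one column.

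The key step is to show that each of the three orders divides the lcm of the other two; we prove $c\mid\lcm(a,b)$ first. Put $m=\lcm(a,b)$. Then $i\alpha^m=i$ and $j\beta^m=j$, so $k\gamma^m=L(i,j)=k$, and hence $c\mid m$. For $b\mid\lcm(a,c)$ we take $m=\lcm(a,c)$. Now $i\alpha^m=i$ and $k\gamma^m=k$, so $L(i,j\beta^m)=k=L(i,j)$. Since a symbol occurs only once in row $i$, we get $j\beta^m=j$, and so $b\mid m$. The case $a\mid\lcm(b,c)$ is symmetric, using the uniqueness of $k$ in column $j$.

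Finally, the three divisibility relations give the equal lcm's. From $c\mid\lcm(a,b)$ we get $\lcm(a,b,c)=\lcm(a,b)$. Likewise $b\mid\lcm(a,c)$ gives $\lcm(a,b,c)=\lcm(a,c)$, and $a\mid\lcm(b,c)$ gives $\lcm(a,b,c)=\lcm(b,c)$.

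I do not expect a real obstacle. The only point needing care is the step that uses the Latin property to force $j\beta^m=j$, and that the iterated autotopism identity holds with the paper's right-action convention. The latter is immediate from $L^\theta=L$, because $\theta^m$ is also an autotopism.
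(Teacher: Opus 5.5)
Your proof is correct and takes essentially the paper's approach. The paper quotes this lemma from earlier work without proof, but its proofs of the analogues \lref{l:lcm12} and \lref{l:lcm123} use your argument: apply a suitable power of the symmetry to obtain an entry agreeing with $(i,j,L(i,j))$ in two coordinates, then conclude that the third coordinate is fixed.
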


Analogous conditions for autoparatopisms are as follows:

\begin{lemm}\label{l:lcm12}
Let $\sigma=(\alpha, \beta, \gamma;(12))$ be an autoparatopism of
a Latin square $L$. Suppose $(i,j,k)$ is a triple of $L$. 
If $o_{\alpha\beta}(i)=a$ and $o_{\beta\alpha}(j)=b$ and $o_\gamma(k)=c$ then
\[
\lcm(2a, 2b)=\lcm(2a, c)=\lcm(2b, c)=\lcm(2a, 2b, c).
\]
\end{lemm}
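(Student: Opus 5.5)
The plan is to reduce to \lref{l:lcmatp} by squaring. Since $(12)$ has order $2$, the square of $\sigma$ is an isotopism. Using the action on triples, $(x,y,z)\sigma=(y\beta,x\alpha,z\gamma)$. Applying $\sigma$ twice gives
\[
(x,y,z)\sigma^2=(x\alpha\beta,\,y\beta\alpha,\,z\gamma^2).
\]
So $\sigma^2=(\alpha\beta,\beta\alpha,\gamma^2;\id)$ is an autotopism of $L$. I will check the order of composition against the paper's right-action convention, but either way the first coordinate is a product of $\alpha$ and $\beta$ and the second is the product in the other order.

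The first step is a direct orbit-length computation for the triple $(i,j,k)$ under $\sigma$. The orbit of $(i,j,k)$ under $\sigma^2$ has length $\lcm(a,b,c')$, where $c'=o_{\gamma^2}(k)$. The orbit under $\sigma$ has length either equal to that or double it. I would rather work directly with the quantity $N$, the length of the $\sigma$-orbit of the triple.

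Every $\sigma^{2m}$ fixes the triple exactly when $\lcm(a,b,c)$-type conditions hold. Concretely, $\sigma^{2m}$ fixes $(i,j,k)$ iff $a\mid m$, $b\mid m$ and $c\mid 2m$. Odd powers $\sigma^{2m+1}$ swap the first two coordinates, so they fix the triple only if $i$ is mapped to $j$ and $j$ is mapped to $i$ appropriately. Thus the $\sigma$-orbit length is essentially governed by $\lcm(2a,2b,c)$, up to a short-orbit factor.

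The cleaner route is to use the Latin property rather than orbit lengths alone. That is the content of the $\lcm$ conditions: any two of the coordinates determine the third. Define $e=\lcm(2a,2b)$. Then $\sigma^{e}$ is an isotopism, since $e$ is even. It acts as $((\alpha\beta)^{e/2},(\beta\alpha)^{e/2},\gamma^{e})$, and this fixes $i$ and $j$ because $a\mid e/2$ and $b\mid e/2$.

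Since $\sigma^e$ is an autotopism of $L$ fixing row $i$ and column $j$, it maps the triple $(i,j,k)$ to a triple $(i,j,k\gamma^e)$. The Latin property then forces $k\gamma^e=k$, so $c\mid e$. Hence $\lcm(2a,2b)=\lcm(2a,2b,c)$.

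Similarly, let $f=\lcm(2a,c)$ and consider $\sigma^f$. Again $f$ is even, and $\sigma^f$ fixes $i$ (as $a\mid f/2$) and fixes $k$ (as $c\mid f$). The triple $(i,j,k)$ maps to $(i,j',k)$, so the Latin property gives $j'=j$. That means $(\beta\alpha)^{f/2}$ fixes $j$, so $b\mid f/2$ and $2b\mid f$. Thus $\lcm(2a,c)=\lcm(2a,2b,c)$, and symmetrically $\lcm(2b,c)=\lcm(2a,2b,c)$.

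The main care point is verifying that $\sigma^{2m}$ acts on the first coordinate by $(\alpha\beta)^m$ and on the second by $(\beta\alpha)^m$, so that these match the hypotheses $o_{\alpha\beta}(i)=a$ and $o_{\beta\alpha}(j)=b$ rather than the reverse. If the composition order in the paper's convention turns out to be reversed, I will note that $\alpha\beta$ and $\beta\alpha$ are conjugate, but the pointwise orbit lengths still need the correct assignment, so I will recompute $(x,y,z)\sigma^2$ carefully from the displayed formula.
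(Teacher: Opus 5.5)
Your proposal is correct and is essentially the paper's proof. You apply the even powers $\sigma^{\lcm(2a,2b)}$ and $\sigma^{\lcm(2a,c)}$ (and symmetrically $\sigma^{\lcm(2b,c)}$), which act on $(i,j,k)$ as $((\alpha\beta)^{m},(\beta\alpha)^{m},\gamma^{2m})$ and so fix two coordinates; the third must then be fixed because distinct triples agree in at most one coordinate. Your concern about composition order is settled, since under the paper's right-action convention $(x,y,z)\sigma^2=(x\alpha\beta,\,y\beta\alpha,\,z\gamma^2)$, and the orbit-length digression can simply be dropped.
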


\begin{proof}
  It is clear that $(i, j, k){\sigma^{2p}}=(i(\alpha\beta)^p,
  j(\beta\alpha)^p, k\gamma^{2p})$, for any integer $p\ge0$.
  Therefore, 
  $(i, j, k){\sigma^{2\lcm(a, b)}}=(i,j,k\gamma^{2\lcm(a,b)})\in O(L)$.
  As $(i, j, k)\in O(L)$ we see that $k\gamma^{2\lcm(a,b)}=k$,
  so $c \mid 2\lcm(a,b)$. This
  means that $\lcm(2a, 2b)=\lcm(2a, 2b, c)$. Similarly,
\begin{align*}
 (i, j, k){\sigma^{\lcm(2a, c)}} 
&= (i(\alpha\beta)^{\lcm(2a, c)/2}, j(\beta\alpha)^{\lcm(2a, c)/2}, k\gamma^{\lcm(2a, c)})
= (i, j(\beta\alpha)^{\lcm(2a,c)/2}, k).
\end{align*}
Therefore $b \mid (\lcm(2a,c)/2)$. It follows that
$\lcm(2a,c)=\lcm(2a, 2b, c)$. A similar argument 
proves that $\lcm(2b,c)=\lcm(2a, 2b, c)$. 
 \end{proof}

\begin{lemm}\label{l:lcm123}
Let $\sigma=(\alpha, \beta, \gamma;(123))$ be an autoparatopism of
a Latin square $L$. Suppose $(i,j,k)$ is a triple of $L$. 
If $o_{\alpha\beta\gamma}(i)=a$,
$o_{\beta\gamma\alpha}(j)=b$ and $o_{\gamma\alpha\beta}(k)=c$, then
\[
\lcm(a, b)=\lcm(a, c)=\lcm(b, c)=\lcm(a, b, c).
\]
\end{lemm}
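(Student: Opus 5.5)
The plan is to imitate the proof of \lref{l:lcm12}, with $\sigma^3$ in place of $\sigma^2$. First compute $\sigma^3$ explicitly. Using the rule $(x,y,z)\sigma=(z\gamma,x\alpha,y\beta)$ for $\delta=(123)$, we get
\[
(x,y,z)\sigma^2=(y\beta\gamma,\,z\gamma\alpha,\,x\alpha\beta),\qquad
(x,y,z)\sigma^3=(x\alpha\beta\gamma,\,y\beta\gamma\alpha,\,z\gamma\alpha\beta).
\]
By induction, $(i,j,k)\sigma^{3p}=(i(\alpha\beta\gamma)^p,\,j(\beta\gamma\alpha)^p,\,k(\gamma\alpha\beta)^p)$ for all $p\ge0$. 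I will double-check this composition against the paper's right-action convention. The exact labelling of which cyclic product goes to which coordinate is the only delicate point. Whatever it turns out to be, each coordinate gets one of the three cyclic rotations of the product, and these rotations are conjugate to one another, so they all have the same cycle structure.

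Next, take $p=\lcm(a,b)$. Then $(i,j,k)\sigma^{3p}=(i,j,k(\gamma\alpha\beta)^p)$, and this triple lies in $O(L)$ because $\sigma\in\ptp(L)$. The Latin property says a row and a column determine the symbol, so $k(\gamma\alpha\beta)^p=k$. Hence $c\mid\lcm(a,b)$, which gives $\lcm(a,b)=\lcm(a,b,c)$.

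Finally, repeat the same argument with $p=\lcm(a,c)$ and then with $p=\lcm(b,c)$. For $p=\lcm(a,c)$, the first and third coordinates are fixed. A row and a symbol determine the column in a Latin square, so the second coordinate must also be fixed. This gives $b\mid\lcm(a,c)$, and hence $\lcm(a,c)=\lcm(a,b,c)$. For $p=\lcm(b,c)$, the column and symbol determine the row, so $a\mid\lcm(b,c)$, and hence $\lcm(b,c)=\lcm(a,b,c)$.

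The main obstacle is just getting the $\sigma^3$ formula right. Everything after that is the standard ``two coordinates determine the third'' argument.
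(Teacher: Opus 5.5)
Your proof is correct and is essentially the paper's argument: compute $(i,j,k)\sigma^{3p}=(i(\alpha\beta\gamma)^p,\,j(\beta\gamma\alpha)^p,\,k(\gamma\alpha\beta)^p)$, take $p=\lcm(a,b)$, $\lcm(a,c)$ and $\lcm(b,c)$ in turn, and use that any two coordinates of a triple of $L$ determine the third. Your computation of $\sigma^3$ from $(x,y,z)\sigma=(z\gamma,x\alpha,y\beta)$ is correct, so the fallback about conjugate cyclic rotations is unnecessary. That fallback would also not suffice on its own, because $a$, $b$, $c$ are orbit lengths of particular points under particular permutations, not just cycle types.
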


\begin{proof}
It is clear that $(i,j,k){\sigma^{3p}}=(i(\alpha\beta\gamma)^p,
j(\beta\gamma\alpha)^p, k(\gamma\alpha\beta)^p)$, for any integer $p\ge0$. 
Therefore
$(i,j,k){\sigma^{3\lcm(a,b)}}=(i,j,k(\gamma\alpha\beta)^{\lcm(a,b)})\in O(L)$. Hence
$c\mid \lcm(a,b)$, which means that $\lcm(a,b)=\lcm(a,b,c)$. Similarly we can
prove that $b \mid \lcm(a,c)$ and $a \mid \lcm(b,c)$. 
\end{proof} 

Let $\idl$ denote the set of ideals in the divisibility lattice of the
positive integers. In \cite{Stones} the elements of $\idl$ were called
{\em strongly lcm-closed sets}. This is because elements $\Lambda$ of
$\idl$ are characterised by the property that $a,b\in\Lambda$ if and
only if $\lcm(a,b)\in\Lambda$. If $\Lambda\in\idl$ is finite then
$\Lambda$ is the set of divisors of the maximum element in $\Lambda$.
Our next results are analogues of \cite[Thm~3.7]{Stones}.
A {\em subsquare} of a Latin square is a submatrix that is itself
a Latin square.

\begin{theo}\label{t:strlcm12}
Suppose $\Lambda\in\idl$. Let $2\Lambda=\{2x: x \in \Lambda \}$. Take
$\Lambda'=\Lambda \cup 2\Lambda$. (It can be proved that $\Lambda'\in\idl$). 
Suppose that
$\sigma=(\id, \beta, \gamma;(12))$ is an autoparatopism of a Latin
square $L$.
Define
$R_{\Lambda}=C_{\Lambda}=\{i\in[n]: o_\beta(i)\in \Lambda\}$, and
$S_{\Lambda'}=\{ i \in [n]: o_\gamma(i)\in \Lambda'\}$. 
If $R_{\Lambda}$ is
non-empty then $|R_{\Lambda}|=|C_{\Lambda}|=|S_{\Lambda'}|$ and $L$
contains a subsquare $M$ on the rows $R_{\Lambda}$, columns
$C_{\Lambda}$ and symbols $S_{\Lambda'}$.
\end{theo}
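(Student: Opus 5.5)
We apply \lref{l:lcm12} with $\alpha=\id$. Then $\alpha\beta=\beta\alpha=\beta$, so for a triple $(i,j,k)$ of $L$ with $a=o_\beta(i)$, $b=o_\beta(j)$ and $c=o_\gamma(k)$ we obtain
\[
\lcm(2a,2b)=\lcm(2a,c)=\lcm(2b,c)=\lcm(2a,2b,c).
\]
The plan is to show that the $R_\Lambda\times C_\Lambda$ block of $L$ uses only symbols from $S_{\Lambda'}$, and then to finish by counting.

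First check that $\Lambda'\in\idl$. If $x\mid 2y$ with $y\in\Lambda$, then either $x\mid y$, or $x=2x'$ with $x'\mid y$. In both cases $x\in\Lambda'$, so $\Lambda'$ is closed under divisors. For lcm-closure, the lcm of two elements of $\Lambda\cup2\Lambda$ divides $2\lcm(y,y')$ for some $y,y'\in\Lambda$, and $\lcm(y,y')\in\Lambda$, so the lcm lies in $\Lambda'$.

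Now take $i\in R_\Lambda$ and $j\in C_\Lambda$, so $a,b\in\Lambda$. The displayed identity gives $c\mid\lcm(2a,2b)=2\lcm(a,b)\in2\Lambda\subseteq\Lambda'$. Since $\Lambda'$ is closed under divisors, $c\in\Lambda'$. Hence every symbol in the block $R_\Lambda\times C_\Lambda$ lies in $S_{\Lambda'}$.

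Next we need the converse: every row $i\in R_\Lambda$ contains each symbol of $S_{\Lambda'}$ only in columns of $C_\Lambda$. Suppose $(i,j,k)\in O(L)$ with $a\in\Lambda$ and $c\in\Lambda'$. The identity $\lcm(2a,c)=\lcm(2a,2b,c)$ gives $2b\mid\lcm(2a,c)$. Write $c=y$ or $c=2y$ with $y\in\Lambda$. Then $\lcm(2a,c)$ divides $\lcm(2a,2y)=2\lcm(a,y)$, so $b\mid\lcm(a,y)\in\Lambda$, and therefore $b\in\Lambda$, i.e.\ $j\in C_\Lambda$.

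These two facts finish the argument by counting. Fix a row $i\in R_\Lambda$. Its cells in columns $C_\Lambda$ contain $|C_\Lambda|$ distinct symbols, all in $S_{\Lambda'}$, so $|C_\Lambda|\le|S_{\Lambda'}|$. Conversely, each symbol of $S_{\Lambda'}$ occurs in row $i$ in some column, which must lie in $C_\Lambda$, so $|S_{\Lambda'}|\le|C_\Lambda|$. Hence $|R_\Lambda|=|C_\Lambda|=|S_{\Lambda'}|$, where $|R_\Lambda|=|C_\Lambda|$ holds by definition. The block is then an $r\times r$ array on $r$ symbols with no repeats in any row or column, so it is a subsquare.

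The only delicate step is the converse divisibility. There we must use the middle equality $\lcm(2a,c)=\lcm(2a,2b,c)$ rather than the first one, and handle $c$ being possibly twice an element of $\Lambda$; the factor of $2$ cancels cleanly.
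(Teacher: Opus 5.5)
Your proof is correct and follows essentially the paper's route: \lref{l:lcm12} (with $\alpha=\id$) first confines the symbols of the $R_\Lambda\times C_\Lambda$ block to $S_{\Lambda'}$. It then shows that no symbol of $S_{\Lambda'}$ can occur in a row of $R_\Lambda$ outside $C_\Lambda$; the paper argues this as the contrapositive, that $b\notin\Lambda$ forces $c\notin\Lambda'$. Beyond the paper, you also verify $\Lambda'\in\idl$, which the paper leaves as an aside, and you make the final counting step explicit.
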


\begin{proof}
By definition $|R_{\Lambda}|=|C_{\Lambda}|$. Let $M$ be the submatrix
of $L$ induced by rows $R_{\Lambda}$ and columns $C_{\Lambda}$. Let
$(i,j,k)$ be any triple in $M$. Then $o_\beta(i)=a$ for
some $a \in \Lambda$, $o_\beta(j)=b$ for some $b\in\Lambda$ 
with $\lcm(2a, 2b)=\lcm(2a, c)=\lcm(2b, c)$, where $c=o_\gamma(k)$. 
Since $a,b\in \Lambda$ we know that $\lcm(a,b) \in \Lambda$. Therefore
$\lcm(2a,2b)=2\lcm(a, b) \in 2\Lambda$. Hence $\lcm (2a, c) \in
\Lambda'$. Therefore $c \in \Lambda'$ and hence
$|R_{\Lambda}|\le|S_{\Lambda'}|$.

Now consider any triple $(i,j,k)$ of $L$ for which
$o_\beta(i)=a \in \Lambda$ and $o_\beta(j)=d \notin \Lambda$. 
If $o_\gamma(k)=c$ then $\lcm(2a, 2d)=\lcm(2a,c)=\lcm(2d, c)$. 
Since $\lcm(a, d) \notin \Lambda$ we can be sure that $\lcm(2a,
2d)=2\lcm(a,d) \notin 2\Lambda$. Therefore $\lcm(2a, 2d) \notin
\Lambda'$. Hence $\lcm(2a, c) \notin \Lambda'$. But $2a \in
\Lambda'$, so $c \notin \Lambda'$. Hence $k \notin
S_{\Lambda'}$. Therefore in each row in $R_{\Lambda}$ the symbols
in $S_{\Lambda'}$ lie inside $M$. Therefore $M$ is a subsquare.
\end{proof}

For example, consider the paratopism $\sigma=(\id, \beta, \gamma;
(12))$ such that the cycle structure of $\beta$ is $18\tdot6\tdot2$
and the cycle structure of $\gamma$ is $18\tdot4^2$. Consider
$\Lambda=\{1,2\}\in\idl$. Then $\Lambda'=\Lambda \cup
2\Lambda=\{1,2,4\}$. Therefore $|R_{\Lambda}|=2$ and
$|S_{\Lambda'}|=8$. Hence by \tref{t:strlcm12}, $\sigma \notin \ptp(26)$.

We now show a more subtle use of \tref{t:strlcm12}. Suppose
$\sigma=(\id,\beta,\gamma;(12))\in\ptp(L)$ where the cycle structure
of $\beta$ is $4^2\tdot2^4$ and the cycle structure of $\gamma$ is
$8\tdot2\tdot1^6$.  Using $\Lambda=\{1,2\}$, we see that $L$ contains a
subsquare on the rows and columns indexed by the $2$-cycles of
$\beta$. As it happens, this subsquare can be constructed on the
symbols in $S_{\Lambda'}$. However, this subsquare is half the order
of $L$, which forces a subsquare, also on the symbols in
$S_{\Lambda'}$, to lie in the rows and columns indexed by the $4$-cycles
of $\beta$.  It will follow from \tref{t:fixgameven12} 
that this subsquare cannot be
built, and hence $\sigma\notin\ptp(16)$ after all.

Taking $\Lambda=\{1\}$ in \tref{t:strlcm12}, we immediately get:

\begin{coro}\label{cy:fixsbsq12}
Suppose $\sigma=(\id, \beta, \gamma;(12)) \in \ptp(L)$ for some
Latin square $L$ of order $n$.
If $1\le|\fix(\beta)|<n$ then the submatrix
$M$ whose rows and columns belong to $\fix(\beta)$ is a subsquare
of $L$, which means that $|\fix(\beta)| \le n/2$.
\end{coro}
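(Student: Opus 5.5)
This follows directly from \tref{t:strlcm12} with $\Lambda=\{1\}$; the plan is to check that the hypotheses hold and then add a standard bound on proper subsquares. First, $\{1\}$ is the ideal of divisors of $1$, so $\{1\}\in\idl$. Then $\Lambda'=\{1,2\}$, and
\[
R_{\Lambda}=C_{\Lambda}=\{i\in[n]:o_\beta(i)=1\}=\fix(\beta).
\]
The hypothesis $|\fix(\beta)|\ge1$ gives $R_\Lambda\neq\emptyset$. \tref{t:strlcm12} therefore applies, and the submatrix $M$ on rows and columns $\fix(\beta)$ is a subsquare of $L$, with symbol set $S_{\Lambda'}=\{k:o_\gamma(k)\in\{1,2\}\}$ and $|S_{\Lambda'}|=|\fix(\beta)|$. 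This proves the first assertion.

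For the bound, I would use the classical fact that a proper subsquare of order $m<n$ in a Latin square of order $n$ satisfies $m\le n/2$. The hypothesis $|\fix(\beta)|<n$ says exactly that $M$ is proper, with $m=|\fix(\beta)|$. To prove the fact, pick a column $c$ outside the columns of $M$. Each row of $M$ already contains all $m$ symbols of $M$ within $M$'s columns. So the entries of column $c$ in the $m$ rows of $M$ are $m$ distinct symbols, none of them symbols of $M$. Hence $n-m\ge m$, that is, $m\le n/2$.

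I expect no real obstacle here. The only points to watch are that $\{1\}$ is genuinely an element of $\idl$, and that we use non-emptiness of $R_\Lambda$ and properness of $M$ in the right places.
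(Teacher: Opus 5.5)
Your proposal is correct and follows the paper's route: the paper obtains this corollary simply by taking $\Lambda=\{1\}$ in \tref{t:strlcm12}. Your column-counting argument that a proper subsquare has order at most $n/2$ supplies the standard fact that the paper leaves implicit.
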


\begin{theo}\label{t:strlcm123}
  Suppose that $\Lambda\in\idl$
  and that
  $\sigma=(\id, \id, \gamma; (123))$ is an autoparatopism of a Latin
  square $L$. Define,
  $R_{\Lambda}=\{i\in[n]:o_\gamma(i)\in \Lambda\}$. 
  If $R_{\Lambda}$ is non-empty then $L$ contains a
  subsquare $M$ on the rows $R_{\Lambda}$, columns $R_{\Lambda}$ and
  symbols $R_{\Lambda}$.
\end{theo}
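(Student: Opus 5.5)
We follow the same pattern as the proof of \tref{t:strlcm12}, with \lref{l:lcm123} playing the role of \lref{l:lcm12}. For $\sigma=(\id,\id,\gamma;(123))$ the three composite permutations in \lref{l:lcm123} are $\alpha\beta\gamma=\beta\gamma\alpha=\gamma\alpha\beta=\gamma$. So for any triple $(i,j,k)$ of $L$, with $a=o_\gamma(i)$, $b=o_\gamma(j)$ and $c=o_\gamma(k)$, we have
\[
\lcm(a,b)=\lcm(a,c)=\lcm(b,c)=\lcm(a,b,c).
\]
Let $M$ be the submatrix of $L$ on rows $R_\Lambda$ and columns $R_\Lambda$.

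First, every symbol in $M$ lies in $R_\Lambda$. Take a triple $(i,j,k)$ with $i,j\in R_\Lambda$, so $a,b\in\Lambda$. Since $\Lambda\in\idl$, we get $\lcm(a,b)\in\Lambda$. The displayed equality gives $c\mid\lcm(a,b,c)=\lcm(a,b)$. Ideals of the divisibility lattice are closed under taking divisors, so $c\in\Lambda$ and hence $k\in R_\Lambda$. Thus $M$ is an $|R_\Lambda|\times|R_\Lambda|$ array whose entries all come from the $|R_\Lambda|$ symbols of $R_\Lambda$.

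Second, $M$ is a subsquare. Each row of $M$ contains $|R_\Lambda|$ distinct symbols, all drawn from a set of that same size. Hence each row of $M$ contains every symbol of $R_\Lambda$ exactly once, and likewise each column. So $M$ is a Latin square on the symbol set $R_\Lambda$.

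Unlike \tref{t:strlcm12}, no separate cardinality comparison or complementary argument is needed. Rows, columns and symbols are all indexed by the same set $R_\Lambda$, so the count matches automatically. The only step needing care is the reduction of \lref{l:lcm123} to the single permutation $\gamma$, together with the observation that $c$ divides $\lcm(a,b)$ and therefore lies in $\Lambda$.
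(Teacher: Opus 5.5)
Your proposal is correct and follows essentially the same route as the paper: specialise \lref{l:lcm123} to $\alpha=\beta=\id$, so that every orbit length is an $o_\gamma$ value. From this, $c\mid\lcm(a,b)\in\Lambda$ forces $c\in\Lambda$, so every symbol of $M$ lies in $R_\Lambda$. The counting step you spell out (each row and column of $M$ has $|R_\Lambda|$ distinct symbols drawn from a set of that size) is exactly what the paper's brief ``Hence $M$ is a subsquare'' leaves implicit.
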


\begin{proof}
Let $(i,j,k)$ be a triple of $L$, with $i,j\in R_{\Lambda}$. Then
$o_\gamma(i)=a$ and $o_\gamma(j)=b$ where $a, b \in \Lambda$. 
By \lref{l:lcm123}, $c=o_\gamma(k)$ satisfies 
$\lcm(a,c)=\lcm(b,c)=\lcm(a,b)\in \Lambda$, since $a,b\in\Lambda$. 
Therefore $c \in \Lambda$ and $k \in R_{\Lambda}$. 
Hence $M$ is a subsquare.
\end{proof}

Taking $\Lambda=\{1\}$ we immediately get:

\begin{coro}\label{cy:fixsbsq123}
Let $\sigma=(\id, \id, \gamma;(123)) \in \grp_n$ where $\gamma \neq\id$. 
If $\sigma \in \ptp(L)$ then the submatrix $M$ whose rows and
columns belong to $\fix(\gamma)$ is a subsquare of $L$, so
$|\fix(\gamma)| \le n/2$.
\end{coro}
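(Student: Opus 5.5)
The plan is to apply \tref{t:strlcm123} with $\Lambda=\{1\}$, which lies in $\idl$ because it is the set of divisors of $1$. With this choice $R_\Lambda=\{i\in[n]:o_\gamma(i)=1\}=\fix(\gamma)$. If $\fix(\gamma)$ is empty there is nothing to prove, since the bound $|\fix(\gamma)|\le n/2$ holds trivially. Otherwise \tref{t:strlcm123} gives a subsquare $M$ of $L$ on rows, columns and symbols $\fix(\gamma)$. Underneath, this is just \lref{l:lcm123}: if $i$ and $j$ are fixed by $\gamma$, the relevant orders are $a=b=1$, so $c$ divides $\lcm(a,b)=1$. Hence $c=1$ and $L(i,j)\in\fix(\gamma)$.

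Next I would show that $M$ is a proper subsquare. Because $\gamma\ne\id$, there is some point not fixed by $\gamma$, so $|\fix(\gamma)|<n$. It then remains to invoke the standard fact that a proper subsquare of order $m$ in a Latin square of order $n$ satisfies $m\le n/2$.

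For completeness I would reprove that fact as follows. Take a row $r\in\fix(\gamma)$ and a column $s\notin\fix(\gamma)$. In row $r$, the symbols of $\fix(\gamma)$ are all used inside $M$, so $L(r,s)$ is one of the $n-m$ symbols outside $\fix(\gamma)$. Consequently the $m\times(n-m)$ block formed by rows $\fix(\gamma)$ and columns outside $\fix(\gamma)$ contains only these $n-m$ symbols. Each column of this block has $m$ distinct entries, so $m\le n-m$, which gives $m\le n/2$.

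No step here is a real obstacle; the argument is a direct specialisation of \tref{t:strlcm123}. The only points needing care are checking that $\{1\}\in\idl$ and ruling out the degenerate cases in which $\fix(\gamma)$ is empty or equal to all of $[n]$. The hypothesis $\gamma\ne\id$ handles the second of these.
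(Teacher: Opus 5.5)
Your proposal is correct and matches the paper's argument: specialise \tref{t:strlcm123} to $\Lambda=\{1\}$, so that $R_\Lambda=\fix(\gamma)$, and use $\gamma\ne\id$ to make the resulting subsquare proper, hence of order at most $n/2$. The paper leaves that standard bound on proper subsquares implicit, and your counting argument for it is sound: the $m\times(n-m)$ block uses only the $n-m$ symbols outside $\fix(\gamma)$.
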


We close this subsection by noting some results which are immediate
corollaries of prior work.

\begin{lemm}\label{l:totsym}
$(\id,\id,\id;\delta)\in\ptp(n)$ for all $\delta\in\sym_3$ and 
positive integers $n$. 
\end{lemm}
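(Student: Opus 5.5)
We need a Latin square $L$ of order $n$ with $L^{(\id,\id,\id;\delta)}=L$ for every $\delta\in\sym_3$. Equivalently, $O(L)$ must be closed under all permutations of coordinates, so $L$ is \emph{totally symmetric}. Once such an $L$ exists for each $n$, $\ptp(L)$ contains $(\id,\id,\id;\delta)$ for every $\delta$, and the lemma follows.

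The plan is to find a totally symmetric quasigroup of every order $n$. The natural candidate is the operation $x\star y\equiv -x-y \pmod n$ on $\Z_n$, relabelled to $[n]$. The triples of its Cayley table are exactly the $(x,y,z)$ with $x+y+z\equiv 0\pmod n$.

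First I will check that this is a Latin square. For fixed $x$ the map $y\mapsto -x-y$ is a bijection of $\Z_n$, and symmetrically for fixed $y$.

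Second, I will check invariance. The condition $x+y+z\equiv0$ is invariant under every permutation of the three coordinates, so $O(L)\delta=O(L)$ for all $\delta\in\sym_3$. Since $\alpha=\beta=\gamma=\id$, the isotopism part acts trivially. Hence $L^{(\id,\id,\id;\delta)}=L$.

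There is no real obstacle. The only care needed is the relabelling between $\Z_n$ and $[n]$, for instance $i\mapsto i-1$. Applying the same bijection to rows, columns and symbols preserves total symmetry. Alternatively, one can note that total symmetry is preserved under conjugation by $(\nu,\nu,\nu;\id)$, which is consistent with \lref{l:conjauto}.
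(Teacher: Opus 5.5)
Your proposal is correct and is essentially the paper's proof: the paper likewise observes that a totally symmetric Latin square exists for every $n$ and gives exactly the example $L(i,j)=-i-j \bmod n$ on $\Z_n$. Your explicit checks of the Latin property, of invariance of $x+y+z\equiv 0$ under coordinate permutations, and of the relabelling to $[n]$ just fill in details the paper leaves implicit.
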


\begin{proof}
For all $n$ there is a totally-symmetric Latin square of order $n$, that is,
a Latin square whose set of triples is invariant under the natural action of
$\sym_3$.
For example, we can take $L(i,j)=-i-j\mod n$ with rows, columns and symbols
indexed by $\Z_n$.
\end{proof}

Consider the following result from \cite{Stones}.

\begin{theo}\label{t:pow2}
  Suppose that $2^a$ is the largest power of $2$ dividing $n$, where $a\ge1$. 
  Let $\theta=(\alpha, \beta, \gamma) \in \sym_n^3$, where
  the length of each cycle in $\alpha, \beta$ and $\gamma$ is
  divisible by $2^a$. Then $\theta \not\in \atp(n)$.
\end{theo}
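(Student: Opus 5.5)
We argue by contradiction, using a parity count over the cells of $L$. Suppose $\theta=(\alpha,\beta,\gamma)\in\atp(L)$ for some Latin square $L$ of order $n$. Write $n=2^a m$ with $m$ odd. Since every cycle of $\alpha,\beta,\gamma$ has length divisible by $2^a$, each of these permutations has exactly $n/c$ cycles of length $c$ summed appropriately. Their signs are therefore determined: a cycle of length $c$ has sign $(-1)^{c-1}=-1$ because $c$ is even. Hence each of $\alpha,\beta,\gamma$ has sign $(-1)^{N}$, where $N$ is its number of cycles.

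A cleaner way is to exploit $\theta^{k}$ for a suitable $k$ in order to reduce to the case where all cycles have length exactly $2^a$ times an odd number. We then pass to the odd part. Let $q$ be the odd number $\lcm$ of the odd parts of all cycle lengths, and replace $\theta$ by $\theta^{q}$. This is still an autotopism of $L$, and now every cycle of each of $\alpha^q,\beta^q,\gamma^q$ has length a power of $2$ that is at least $2^a$, since the $2$-part of a cycle length is preserved.

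Now consider the permutation $\pi=\theta^{2^{a-1}}$, or more precisely the element $\theta^{M/2}$, where $M=\lcm$ of all cycle lengths (a power of $2$, at least $2^a$). Each of its three components is then a fixed-point-free involution on $[n]$, or at least has no fixed points. The component is an involution exactly on the points whose cycle length equals $M$.

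The key counting step follows from \lref{l:lcmatp}. In each row $i$ whose $\alpha$-cycle length is $M$, consider the map $j\mapsto j\beta^{M/2}$. Cells $(i,j)$ and $(i\alpha^{M/2},j\beta^{M/2})$ are distinct and are paired. The standard argument (as in the proof of the order-$2$ analogue for Cayley tables) fixes a row $r$ with $o_\alpha(r)=M$ and considers the Latin square's row permutation. This yields a parity contradiction: the involutions have $n/2$ transpositions, and $n/2$ is odd only if $a=1$. So instead I would iterate the argument.

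My main plan is as follows. First reduce to the case where all cycle lengths are powers of $2$ via $\theta^q$. Then, by \lref{l:lcmatp}, the rows, columns and symbols with cycle length dividing $2^{b}$ form subsquares for each $b$ (the autotopism analogue of \tref{t:strlcm123}). Finally, induct on the number of distinct cycle lengths; the base case has all lengths equal to $2^e$ with $e\ge a$.

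In the base case, I count orbits. Every cell orbit has length $2^e$, so $n^2/2^e$ orbits exist, and within one row the $n$ cells meet each symbol cycle. Summing symbol occurrences in row $r$ over $\theta^{2^e}$-stable structure, I expect to derive that $n/2^e$ must be even via a sign argument. Consider the permutation $\rho_r$ of $[n]$ sending $L(r,j)\mapsto L(r\alpha^{?},\ldots)$, or compare $\mathrm{sgn}$ of the row-to-row permutations, and show the product of signs forces $2^{a+1}\mid n$, a contradiction. This sign computation is the main obstacle. If it fails, I would instead follow \cite{Stones}: count, for the permutation $\beta^{2^{e-1}}$, the cells in one row orbit, and use that $n/2^{e}$ is odd when $e=a$. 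The inductive step peels off the largest-length subsquare complement.
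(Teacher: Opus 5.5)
The paper gives no proof of this statement; it is quoted from \cite{Stones}. Your proposal therefore has to stand on its own, and it does not: the decisive case is left open, and ``follow \cite{Stones}'' is not an argument. Passing to $\theta^q$ with $q$ odd is sound, since it preserves the $2$-part of every cycle length. Your induction can also be completed, although you do not say how. Let $2^b$ be the largest cycle length, with $b>a$. If rows and columns of length less than $2^b$ both exist, \lref{l:lcmatp} shows they carry a subsquare of order $s<n$ with $2^b\mid n-s$, so $2^a$ is still the exact power of $2$ dividing $s$. If one of these sets is empty, every symbol has length $2^b$, which forces $2^{a+1}\mid n$. Two smaller points: your claim that the components of $\theta^{M/2}$ have no fixed points is false (they fix every point in a cycle shorter than $M$), and your base case is necessarily $e=a$. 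So everything reduces to $\alpha,\beta,\gamma$ all of cycle type $(2^a)^m$ with $m$ odd, and this is the real content of the theorem. Already the case $m=1$ contains Euler's theorem that $\Z_n$ has no orthomorphism when $n=2^a$. That proof is a sum modulo $n$, and its reduction modulo $2$ is vacuous once $4\mid n$.

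This is why your sign strategy cannot close the gap for $a\ge2$. Signs record only parities, and here they are all forced to be trivial: $\rho_{i\alpha}=\beta^{-1}\rho_i\gamma$ with $\mathrm{sgn}\,\beta=\mathrm{sgn}\,\gamma$, so the products of row, column and symbol signs are all $+1$. As you note yourself, such arguments only detect whether $n/2$ is odd.

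The missing idea is a $\Z_{2^a}$-valued invariant summed over orbit representatives, as in the proof of \tref{t:eqcyc}. Put $D=2^a$, take the permutations canonical, and let $\phi(x)\in\Z_D$ be the position of $x$ in its cycle. Then $\delta=\phi(j)-\phi(i)$ and $\epsilon=\phi(k)-\phi(i)$ are constant on each orbit $O$ of $\langle\theta\rangle$ on $O(L)$, and every orbit has length $D$.

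Each orbit meets the $m$ rows with $\phi=0$ exactly once, and those rows together contain every symbol $m$ times. Hence $\sum_O\epsilon(O)\equiv m\cdot mD(D-1)/2\equiv D/2\pmod D$. The same count over the columns with $\phi=0$ gives $\sum_O(\epsilon(O)-\delta(O))\equiv D/2$, so $\sum_O\delta(O)\equiv0$.

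On the other hand, each of the $m^2$ blocks of $D\times D$ cells splits into $D$ orbits on which $\delta$ runs through all of $\Z_D$. This gives $\sum_O\delta(O)\equiv m^2D/2\equiv D/2\pmod D$, a contradiction. If you weight each orbit by its length divided by $D$, the same computation works directly for arbitrary cycle lengths divisible by $D$, so your reduction and induction become unnecessary.
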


This immediately implies:

\begin{coro}\label{cy:pow2para}
  Suppose that $2^{a}$ is the largest power of $2$ dividing $n$, where
  $a\ge1$. Let $\sigma=(\alpha, \beta, \gamma;(123))\in\grp_n$, where
  the length of each cycle in $\alpha\beta\gamma$ is divisible by
  $2^a$. Then $\sigma \notin \ptp(n)$.
\end{coro}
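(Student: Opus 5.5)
The plan is to pass from $\sigma$ to its cube, which is an isotopism, and then apply \tref{t:pow2}. Suppose, for a contradiction, that $\sigma=(\alpha,\beta,\gamma;(123))\in\ptp(L)$ for some Latin square $L$ of order $n$. Then $\sigma^3\in\ptp(L)$. Since $(123)^3=\id$, the element $\sigma^3$ is an isotopism, so it is an autotopism of $L$.

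First we compute $\sigma^3$ explicitly. The proof of \lref{l:lcm123} already records that
\[
(i,j,k)\sigma^{3}=(i\alpha\beta\gamma,\ j\beta\gamma\alpha,\ k\gamma\alpha\beta).
\]
Hence $\sigma^3=(\alpha\beta\gamma,\beta\gamma\alpha,\gamma\alpha\beta;\id)$, and this lies in $\atp(L)\subseteq\atp(n)$.

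Next we check the cycle lengths. The permutations $\beta\gamma\alpha=\alpha^{-1}(\alpha\beta\gamma)\alpha$ and $\gamma\alpha\beta=(\alpha\beta)^{-1}(\alpha\beta\gamma)(\alpha\beta)$ are conjugate to $\alpha\beta\gamma$. So all three components of $\sigma^3$ have the same cycle structure as $\alpha\beta\gamma$. By hypothesis every cycle of $\alpha\beta\gamma$ has length divisible by $2^a$, so every cycle of all three components does too.

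Finally, \tref{t:pow2} says that $\sigma^3\notin\atp(n)$, which contradicts the first step. Hence $\sigma\notin\ptp(n)$. No step is a real obstacle; the only care needed is getting the order of composition in $\sigma^3$ right under the right-action convention, and the formula from \lref{l:lcm123} settles that.
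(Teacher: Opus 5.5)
Your proposal is correct and follows the paper's proof: you cube $\sigma$ to get the autotopism $(\alpha\beta\gamma,\beta\gamma\alpha,\gamma\alpha\beta)$, observe that its three components are conjugate, and then apply \tref{t:pow2}. Your explicit conjugating elements $\alpha$ and $\alpha\beta$ make precise the paper's remark that $\alpha\beta\gamma\sim\beta\gamma\alpha\sim\gamma\alpha\beta$.
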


\begin{proof} 
  If $\sigma \in \ptp(L)$ then $\sigma^3=(\alpha\beta\gamma,
  \beta\gamma\alpha, \gamma\alpha\beta;\id) \in \atp(L)$. By
  assumption, the length of each cycle in $\alpha\beta\gamma$ is divisible 
  by $2^a$. However,
  $\alpha\beta\gamma\sim\beta\gamma\alpha\sim\gamma\alpha\beta$, so
  this is a contradiction of \tref{t:pow2}.
\end{proof}

For example,
$(\id,\id,\gamma;(123))\notin\ptp(n)$ when $\gamma$ has
cycle structure $d^m$, where $d$ is even and $m$ is odd.
If instead we make $m$ even in this example then 
\cref{cy:pow2para} tells us nothing, since then $n$ is divisible
by a higher power of 2 than $d$ is.

Bryant {\em et al.}~\cite{BryantBuchananWanless2009} considered Latin squares
with cyclic automorphisms and certain additional symmetries. Composing
the automorphism with an additional symmetry immediately gives the following:

\begin{theo}\label{t:fromBBW}
Suppose $\alpha \in \sym_n$ has cycle structure $(n-f)^1\tdot1^f$
Then $\sigma=(\alpha, \alpha, \alpha;(12))\in\ptp(n)$ if 
\be
\item[(i)] $f=0$ and $n$ is odd,
\item[(ii)] $f=1$, or 
\item[(iii)] $f=2$ and $n$ is even.
\end{enumerate}
Also $\sigma=(\alpha, \alpha, \alpha;(123)) \in \ptp(n)$ if 
\be
\item[(i)] $f=0$ and $n \equiv 1,3\mod 6$, 
\item[(ii)] $f=1$ and $n \not\equiv 0\mod6$ and $n\ne10$,
\item[(iii)] $f \equiv 2\mod3$ and $n \nequiv 0\mod3$,
\item[(iv)] $f \equiv 0\mod3, f \ge 3$ and $n \nequiv 2\mod3$, or
\item[(v)] $f \equiv 1\mod3$ and $f \ge 4$.
\end{enumerate}
\end{theo}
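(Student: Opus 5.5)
The theorem follows directly from the constructions of Bryant, Buchanan and Wanless~\cite{BryantBuchananWanless2009}, combined with the observation that composing an automorphism with a compatible ``conjugate symmetry'' yields an autoparatopism. For each case I will take a Latin square $L$ of order $n$ from \cite{BryantBuchananWanless2009} that has two properties:
\begin{itemize}
\item it admits the automorphism $\alpha$ with cycle structure $(n-f)^1\tdot1^f$, so that $(\alpha,\alpha,\alpha;\id)\in\atp(L)$;
\item it has the relevant additional symmetry: symmetric, meaning $(\id,\id,\id;(12))\in\ptp(L)$, or semisymmetric, meaning $(\id,\id,\id;(123))\in\ptp(L)$, and in the totally symmetric cases both.
\end{itemize}
Since $\ptp(L)$ is a group, the product $(\alpha,\alpha,\alpha;\id)(\id,\id,\id;\delta)$ lies in $\ptp(L)$. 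Applying the wreath-product multiplication rule, the coordinates $(\alpha,\alpha,\alpha)$ are permuted among themselves by $\delta$, so the product is exactly $(\alpha,\alpha,\alpha;\delta)$. This gives $\sigma\in\ptp(n)$.

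The work therefore reduces to matching each listed case with the corresponding existence theorem in \cite{BryantBuchananWanless2009}.

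\textbf{The case $\delta=(12)$.} I will cite their results on symmetric Latin squares admitting a cyclic automorphism with $f$ fixed points, which hold exactly when:
\begin{itemize}
\item $f=0$ and $n$ is odd; for instance $\cyc_n$ relabelled as $L(i,j)=i+j$ over $\Z_n$, with $\alpha:x\mapsto x+1$, since $(i+1)+(j+1)=(i+j)+2$ after a suitable relabelling;
\item $f=1$, for all $n$;
\item $f=2$ and $n$ even.
\end{itemize}

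\textbf{The case $\delta=(123)$.} I will cite their classification of semisymmetric quasigroups (equivalently, Mendelsohn-type quasigroups satisfying $y(xy)=x$) admitting an automorphism with one cycle of length $n-f$ and $f$ fixed points. Cases (i)–(v) should be read off that classification verbatim, including the exception $n=10$ for $f=1$.

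One check is needed throughout. I must confirm that the symmetry notion used in \cite{BryantBuchananWanless2009} matches the triple action defined here, with permutations acting on the right. In particular, semisymmetry must mean invariance of $O(L)$ under the cyclic shift $(x,y,z)\mapsto(z,x,y)$. If the shift in \cite{BryantBuchananWanless2009} runs the other way, that is harmless: replacing $\delta$ by $\delta^{-1}$ gives another element of $\ptp(L)$, and the resulting paratopism is conjugate to $\sigma$ in $\grp_n$ by \tref{t:conj}, so \lref{l:conjauto} still applies.

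The main obstacle is bookkeeping rather than mathematics. It lies in accurately transcribing the case conditions from the cited paper, especially the congruence conditions modulo $3$ in the $(123)$ case.
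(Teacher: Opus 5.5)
Your proposal is correct and follows the paper's own route: take a commutative (respectively semisymmetric) quasigroup from Bryant, Buchanan and Wanless that has $\alpha$ as an automorphism, and multiply $(\alpha,\alpha,\alpha;\id)$ by $(\id,\id,\id;\delta)$ in the group $\ptp(L)$ to get $(\alpha,\alpha,\alpha;\delta)$. One small slip in your illustrative example: $x\mapsto x+1$ is not an automorphism of $L(i,j)=i+j$ itself, but it is an automorphism of the symbol-relabelled square $L(i,j)=2^{-1}(i+j)$ over $\Z_n$, which is still symmetric when $n$ is odd.
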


\section{Autoparatopisms of the form $(\alpha, \beta, \gamma; (12))$}\label{s:12}

In this section and the next we prove a number of general results
which together are sufficient to determine $\ptp(n)$ for $n\le17$.
By \tref{t:conj}, whether $\sigma=(\alpha,\beta,\gamma;(12))$ is in 
$\ptp(n)$ depends only on the cycle structure
of $\alpha\beta$ and the cycle structure of $\gamma$, so it is enough to
study paratopisms of the form $(\id, \beta, \gamma;(12))$. 
We start by proving a number of constraints on autoparatopisms of this
form. After that we study some special cases in which it is feasible 
to characterise exactly which paratopisms are autoparatopisms.

Many of the results in this section will employ the same basic
technique to bound the number of symbols which are fixed points of
$\gamma$. We concentrate on one row and consider how many columns in
that row may contain fixed symbols.  Another technique that we employ
repeatedly is to take a triple $T$, apply some power of a supposed
autoparatopism to produce a triple $T'$ that agrees in two places with $T$,
then deduce that $T'=T$. This relies on the fact that distinct triples
of a Latin square agree in at most one coordinate.

\begin{theo}\label{t:ord12}
Suppose that $(\id,\beta,\gamma;(12))\in\ptp(n)$. Let $b,c$ be respectively
the orders of $\beta,\gamma$ as elements of $\sym_n$. Then $c\mid2b$ and
if $c$ is odd then $c=b$. 
\end{theo}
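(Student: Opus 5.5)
The plan is to pass to suitable powers of $\sigma=(\id,\beta,\gamma;(12))\in\ptp(L)$ and compare triples. Two facts drive everything: distinct triples of a Latin square agree in at most one coordinate, and every symbol occurs in every row.

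\emph{First claim, $c\mid 2b$.} From the rule $(x,y,z)\sigma=(y\beta,x,z\gamma)$ we get $(x,y,z)\sigma^2=(x\beta,y\beta,z\gamma^2)$. Hence $\sigma^{2b}=(\id,\id,\gamma^{2b};\id)\in\atp(L)$, so $L(i,j)\gamma^{2b}=L(i,j)$ for every cell $(i,j)$. Every symbol occurs in $L$, so $\gamma^{2b}=\id$, which gives $c\mid 2b$.

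\emph{Second claim, $c=b$ when $c$ is odd.} Since $c$ is odd and $c\mid 2b$, we already have $c\mid b$, so it suffices to show $\beta^c=\id$. Write $c=2k+1$. Using $\sigma^{2k}=(\beta^k,\beta^k,\gamma^{2k};\id)$ followed by one more application of $\sigma$, a triple $(x,y,z)$ is sent to
\[
(x,y,z)\sigma^c=(y\beta^{k+1},\,x\beta^k,\,z\gamma^{c})=(y\beta^{k+1},\,x\beta^k,\,z).
\]
In other words, $L(y\beta^{k+1},x\beta^k)=L(x,y)$ for all $x,y$.

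Now fix a row $x$ and choose $y=x\beta^{-k-1}$, so that the image triple again lies in row $x$. This gives $L(x,x\beta^k)=L(x,x\beta^{-k-1})$. Two cells in the same row contain the same symbol, so their columns coincide: $x\beta^k=x\beta^{-k-1}$, that is, $x\beta^{2k+1}=x\beta^c=x$. This holds for every $x$, so $\beta^c=\id$ and $b\mid c$, hence $b=c$.

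The main obstacle is the second claim. The obvious autotopism $\sigma^{2c}=(\beta^c,\beta^c,\id;\id)$ does not by itself force $\beta^c=\id$. The key step is to use the odd power $\sigma^c$, which fixes symbols while swapping coordinates, and to choose the cell so that its image lands in the same row.
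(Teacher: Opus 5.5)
Your proof is correct and follows the paper's argument. You use $\sigma^{2b}$ to get $c\mid 2b$, and for odd $c$ you apply $\sigma^c$ to a triple whose image agrees with it in two coordinates. The only cosmetic difference is the choice of cell: the paper uses $(i,i\beta^{(c-1)/2})$, whose image shares its column and symbol, and argues cycle by cycle. You use $(x,x\beta^{-(c+1)/2})$, whose image shares its row and symbol, and obtain $\beta^c=\id$ directly.
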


\begin{proof}
Since $\sigma=(\id,\beta,\gamma;(12))\in\ptp(n)$ we know that
$\sigma^{2b}=(\id,\id,\gamma^{2b};\id)\in\ptp(n)$. It follows that
$\gamma^{2b}=\id$, so $c\mid2b$.

From now on suppose that $c$ is odd. Suppose that $d$ is any cycle length 
of $\beta$. It suffices to show that $d\mid c$. As $d$ was arbitrary 
this will show that $b\mid c$, which together with $c\mid 2b$ will imply
$b=c$, since $c$ is odd.

Consider a row $i$ such that $o_\beta(i)=d$ and let 
$k=L(i,i\beta^{(c-1)/2})$. As
$(i,i\beta^{(c-1)/2}, k)\in O(L)$, we know that
$O(L)$ also includes the triple
$$(i,i\beta^{(c-1)/2},k){\sigma^{c}}
=(i\beta^{c},i\beta^{(c-1)/2},k\gamma^{c})
=(i\beta^{c},i\beta^{(c-1)/2},k).$$
Hence, $i\beta^{c}=i$ which means that $d\mid c$, as required.
\end{proof}

\begin{coro}
$(\alpha,\beta,\id;(12))\in\ptp(n)$ if and only if 
$\alpha=\beta^{-1}\in\sym_n$.
\end{coro}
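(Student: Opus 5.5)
The plan is to reduce to the form $(\id,\beta',\id;(12))$ and then apply \tref{t:ord12}, with a separate construction for the converse.

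\emph{Necessity.} By \tref{t:conj}, the paratopism $(\alpha,\beta,\id;(12))$ is conjugate to $(\id,\alpha\beta,\id;(12))$: the transposition $(12)$ contributes the product $\alpha\beta$, and the fixed coordinate $3$ contributes $\id$. So by \lref{l:conjauto} it lies in $\ptp(n)$ exactly when $(\id,\alpha\beta,\id;(12))$ does. In \tref{t:ord12} take $\gamma=\id$, so its order is $c=1$. Since $c$ is odd, that theorem gives $c=b$, where $b$ is the order of $\alpha\beta$. Hence $\alpha\beta=\id$, that is, $\alpha=\beta^{-1}$.

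\emph{Sufficiency.} Suppose $\alpha=\beta^{-1}$. Then $\alpha\beta=\id$, and the same conjugacy shows that $(\alpha,\beta,\id;(12))$ is conjugate to $(\id,\id,\id;(12))$. By \lref{l:totsym} the latter lies in $\ptp(n)$; concretely, any symmetric Latin square, such as $\cyc_n$, works. \lref{l:conjauto} then finishes the proof. One can also check directly: if $L$ is symmetric, then $L^{(\alpha^{-1},\alpha^{-1},\id)}$ admits $(\alpha,\alpha^{-1},\id;(12))$.

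The only step needing care is applying \tref{t:conj} correctly, including the order of the product $\alpha\beta$ versus $\beta\alpha$. This causes no difficulty, since $\alpha\beta\sim\beta\alpha$ and the condition $\alpha\beta=\id$ is symmetric under swapping the two.
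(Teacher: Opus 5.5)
Your proof is correct and is essentially the paper's own argument: reduce via \tref{t:conj} to $(\id,\alpha\beta,\id;(12))$, then use \tref{t:ord12} with $c=1$ for necessity and \lref{l:totsym} for sufficiency. Your optional ``direct check'' has a slip: $L^{(\alpha^{-1},\alpha^{-1},\id)}$ is again symmetric and in general does not admit $(\alpha,\alpha^{-1},\id;(12))$ (with $L=\cyc_3$ and $\alpha=(1\,2)$, the triple $(1,3,1)$ maps to $(3,2,1)$, which is not a triple), whereas $L^{(\id,\alpha,\id)}$ does admit it; since that aside is not used, your argument stands.
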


\begin{proof}
  We know that $(\alpha,\beta,\id;(12))\in\ptp(n)$
  if and only if $(\id,\alpha\beta,\id;(12))\in\ptp(n)$, by \tref{t:conj}.
If $\alpha=\beta^{-1}$ then 
$(\id,\alpha\beta,\id;(12))$ is in $\ptp(n)$ by \tref{l:totsym}.
\tref{t:ord12} shows that $(\id,\alpha\beta,\id;(12))\notin\ptp(n)$ if 
$\alpha\ne\beta^{-1}$.
\end{proof}

By \tref{t:ord12},
$(\id,\beta,\gamma;(12))\notin\ptp(12)$ if $\beta$ and $\gamma$
have cycle structures $6^2$ and $3^4$ respectively. Note
that \tref{t:ord12} does not eliminate the case when
$\beta$ and $\gamma$ have cycle structures $3^4$ and $6^2$ respectively. 
Our next result does rule out that case.

\begin{theo}\label{t:cycdiv12}
  Suppose $\sigma=(\id, \beta, \gamma;(12)) \in \ptp(n)$. 
  If $\beta$ has a cycle of odd length $d$ then $\gamma$ has at least one 
  cycle whose length divides $d$.
\end{theo}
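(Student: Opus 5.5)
The natural tool is the short cell orbit supplied by \lref{l:orb12}(i). Let $\beta_t$ be a cycle of $\beta$ of odd length $d$. In the block $M_{tt}$ there is a cell orbit of length exactly $d$, namely $\{(i,i\beta^{(d-1)/2}): i\in\beta_t\}$. The plan is to look at the symbol $k=L(i,i\beta^{(d-1)/2})$ in one cell of this orbit and show that its $\gamma$-orbit length divides $d$. That gives a cycle of $\gamma$ (the one containing $k$) whose length divides $d$, which is exactly what \tref{t:cycdiv12} asserts.

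For the key computation, fix $i\in\beta_t$ and put $j=i\beta^{(d-1)/2}$, so that $(i,j,k)\in O(L)$. Applying $\sigma=(\id,\beta,\gamma;(12))$ once gives
\[
(i,j,k)\sigma=(j\beta,i,k\gamma)=(i\beta^{(d+1)/2},i,k\gamma).
\]
Applying it again gives $(i\beta,i\beta^{(d+1)/2},k\gamma^2)=(i\beta,j\beta,k\gamma^2)$. By induction, $\sigma^{2m}$ sends $(i,j,k)$ to $(i\beta^m,j\beta^m,k\gamma^{2m})$. Taking $m=d$ returns the cell to $(i,j)$, so the triple $(i,j,k\gamma^{2d})$ lies in $O(L)$. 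Since distinct triples agree in at most one coordinate, $k\gamma^{2d}=k$.

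The sharper claim is that $\sigma^d$ already fixes the triple, because the cell orbit has length $d$. One checks that $(i,j)\sigma^{d}$ returns to $(i,j)$: with $d=2m+1$, we have $\sigma^{2m}$ taking the cell to $(i\beta^m,j\beta^m)=(j,i\beta^{d-1})$, and then one more step of $\sigma$ gives $(i\beta^{d},j)=(i,j)$. So $(i,j,k)\sigma^d=(i,j,k\gamma^d)\in O(L)$, and by the same agreement argument $k\gamma^d=k$. Hence $o_\gamma(k)$ divides $d$, and the cycle of $\gamma$ through $k$ is the required one.

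The only real obstacle is doing the index bookkeeping correctly so that $\sigma^d$ (and not only $\sigma^{2d}$) returns the cell to $(i,j)$. This is exactly where oddness of $d$ is used, through the choice $j=i\beta^{(d-1)/2}$, mirroring the computation in the proof of \lref{l:orb12}. It is worth noting that using $\sigma^{2d}$ alone would only show $o_\gamma(k)\mid 2d$, which is not enough.
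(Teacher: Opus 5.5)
Your proposal is correct and is essentially the paper's argument: the paper's one-line proof observes that any symbol $k$ in the short cell orbit of length $d$ (from \lref{l:orb12}(i)) in the diagonal block of the $d$-cycle must satisfy $o_\gamma(k)\mid d$. You have simply made explicit the computation behind that remark, namely that $\sigma^d$ fixes the cell $(i,i\beta^{(d-1)/2})$, which forces $k\gamma^d=k$.
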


\begin{proof}
Any symbol $k$ in the short orbit in the block with rows and columns 
indexed by the $d$-cycle of $\beta$ must satisfy $o_\gamma(k)\mid d$.
\end{proof}

In particular, \tref{t:cycdiv12} says that if $\beta$ has fixed points
then $\gamma$ has at least one fixed point. We next consider upper
bounds on the number of fixed points that $\gamma$ may have.

\begin{theo}\label{t:fixgameven12}
  Suppose that $\sigma=(\id, \beta, \gamma; (12)) \in\ptp(n)$.
  Fix a positive integer $d$ and let $r$ be the number of cycles of 
  $\beta$ that have length $d$. 
  If $r>0$ and $f$ is the number of fixed points of $\gamma$ then
\be
\item[(i)] $f\le(r-1)d$ if $d$ is even, and 
\item[(ii)] $f\le(r-1)d+1$ if $d$ is odd.
\end{enumerate}
\end{theo}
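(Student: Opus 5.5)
The plan is to fix a single row $i$ lying in one of the $d$-cycles of $\beta$, call that cycle $D$, and count the columns of row $i$ that can contain a fixed point of $\gamma$. Each fixed symbol of $\gamma$ occurs exactly once in row $i$, so $f$ is at most the number of such admissible columns. Let $L$ be a Latin square with $\sigma\in\ptp(L)$, and suppose $(i,j,k)\in O(L)$ with $k\gamma=k$.

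First, I rule out columns outside the $d$-cycles of $\beta$. Apply \lref{l:lcm12} with $\alpha=\id$, so that $o_{\alpha\beta}=o_{\beta\alpha}=o_\beta$. Take $a=d$, let $b=o_\beta(j)$, and note $c=1$. The lemma gives $\lcm(2d,2b)=\lcm(2d,1)=\lcm(2b,1)$, hence $2d=2b$ and so $b=d$. Therefore every fixed symbol in row $i$ lies in a column belonging to one of the $r$ cycles of $\beta$ of length $d$. This leaves at most $rd$ candidate columns.

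Next, I show that within $D$ itself at most one column is available. This is the main step, and I will use the ``two triples agreeing in two coordinates'' technique. Write $j=i\beta^q$ with $0\le q<d$. Applying $\sigma$ once to $(i,j,k)$ gives the triple $(j,i\beta,k)$. Applying $\sigma^{2q}$ gives $(i\beta^q, j\beta^q, k)=(j, i\beta^{2q}, k)$, using the formula for $\sigma^{2p}$ from the proof of \lref{l:lcm12}. Both triples lie in row $j$ and contain symbol $k$, so their columns agree: $i\beta=i\beta^{2q}$. Hence $d\mid 2q-1$. If $d$ is even this is impossible, so no column of $D$ contains a fixed symbol in row $i$. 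If $d$ is odd, the congruence forces $q=(d-1)/2$, which is exactly the short-orbit cell of \lref{l:orb12}(i). So at most one column of $D$ is available.

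Combining the two steps, row $i$ has at most $(r-1)d$ admissible columns when $d$ is even, and at most $(r-1)d+1$ when $d$ is odd. Since every fixed symbol of $\gamma$ must appear somewhere in row $i$, this gives (i) and (ii).
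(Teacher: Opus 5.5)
Your argument is the paper's argument. \lref{l:lcm12} confines the fixed symbols in row $i$ to columns lying in the $d$-cycles of $\beta$. Then the analysis inside $i$'s own cycle leaves no column when $d$ is even and one when $d$ is odd. The paper reads this off the cell-orbit structure of \lref{l:orb12}, whereas you do it by hand with the two-triples argument.

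There is one computational slip to fix. Under the paper's convention $(x,y,z)\sigma=(y\beta,x\alpha,z\gamma)$, the image of $(i,j,k)$ is $(j\beta,i,k)$, not $(j,i\beta,k)$. The correct comparison is between $(j\beta,i,k)$ and $(i,j,k)\sigma^{2q+2}=(j\beta,i\beta^{2q+1},k)$. Equivalently, compare $(i,j,k)\sigma^{-1}=(j,i\beta^{-1},k)$ with $(i,j,k)\sigma^{2q}=(j,i\beta^{2q},k)$. Either way you get $d\mid 2q+1$, whose unique solution for odd $d$ is $q=(d-1)/2$, the short-orbit cell.

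Your congruence $d\mid 2q-1$ would instead force $q=(d+1)/2$ when $d\ge 3$. So the value of $q$ you state does not follow from your own congruence; the two slips happen to cancel.

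Neither slip affects the count. In both versions $d$ must divide an odd integer, which is impossible for even $d$ and has exactly one solution modulo odd $d$. Hence bounds (i) and (ii) follow as you claim.
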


\begin{proof}
  We assume that $\sigma\in\ptp(L)$ for some Latin square $L$ of order
  $n$.  Fix a row $i$ such that $o_\beta(i)=d$ and suppose that
  $(i,j,k)\in O(L)$, where $o_\gamma(k)=1$. By \lref{l:lcm12},
  $o_\beta(j)=d$, giving us at most $rd$ options for $j$.  Now apply
  \lref{l:orb12}.  When $d$ is even, $j$ cannot be from the same orbit
  of $\beta$ as $i$ which means that $f\le(r-1)d$. When $d$ is odd,
  there is a unique possibility for $j$ in the same orbit of $\beta$
  as $i$, meaning that $f\le(r-1)d+1$.
\end{proof}

For example $\sigma=(\id, \beta, \gamma;(12)) \notin \ptp(n)$ when
$\beta$ and $\gamma$ have respective cycle structures
$4^2$ and $2^1\tdot1^6$. The same conclusion is reached 
if $\beta$ and $\gamma$ both have cycle structure
$3^2\tdot1^5$.

Our next result will improve on \tref{t:fixgameven12} in some cases
when $r$ is odd.

\begin{theo}\label{t:fixgamodd12}
Suppose that $\sigma=(\id, \beta, \gamma;(12)) \in \ptp(n)$.
Fix an integer $d>1$ and let $r$ be the number of cycles of 
$\beta$ that have length $d$.
Suppose that $r$ is odd.
Let $\Gamma$ be the set of all cycles $C$ of $\gamma$ satisfying
\be
\item[(i)] $o(C)$ is an odd divisor of $d$,
\item[(ii)] $\beta$ has no cycle $C'$ satisfying $1<o(C')<d$
and $\lcm(o(C),o(C'))=d$, and
\item[(iii)] $o(C)<d$ if $|\fix(\beta)|$ is odd.
\end{enumerate}
Then $\Gamma=\emptyset$ if $d$ is even and $|\Gamma|\le r$ if $d$ is odd.
\end{theo}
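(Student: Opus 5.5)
The plan is a parity count. Fix a cycle $C\in\Gamma$ with $c=o(C)$ and a symbol $k\in C$. Let $R$ be the union of the $d$-cycles of $\beta$, so $|R|=rd$. We count the occurrences of $k$ in the submatrix $R\times R$ in two ways.

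\emph{Where $k$ can sit.} Take a triple $(i,j,k)$ with $i\in R$ and $o_\beta(j)=b$.
\begin{itemize}
\item Since $c\mid d$, we have $\lcm(2d,c)=2d$. \lref{l:lcm12} then gives $\lcm(2d,2b)=2d$, so $b\mid d$.
\item Since $c$ is odd, $\lcm(2b,c)=2\lcm(b,c)$. So \lref{l:lcm12} also gives $\lcm(b,c)=d$.
\item Condition (ii) of the theorem rules out $1<b<d$. Hence either $b=d$, or $b=1$ and $c=d$.
\item The same holds with the roles of rows and columns swapped.
\end{itemize}
So every occurrence of $k$ in a row or column of $R$ lies in $R\times R$, in $R\times\fix(\beta)$ or in $\fix(\beta)\times R$. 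The last two regions can be used only when $c=d$.

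\emph{The boundary regions cancel mod 2.} The autoparatopism $\sigma$ maps $(i,j,k')$ to $(j,i\beta,k'\gamma)$. This restricts to a bijection between the entries of $R\times\fix(\beta)$ and those of $\fix(\beta)\times R$ whose symbols lie in $C$. Hence these two regions contain equally many symbols of $C$.

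\emph{First count.} The number of occurrences of all symbols of $C$ in $R\times R$ equals $c\cdot rd$ minus an even number. The symmetric argument (summing over symbols of $C$ rather than fixing $k$) is what I would use here, to avoid having to track a single $k$ across the bijection.

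\emph{Second count, via cell orbits.} By \lref{l:orb12}, the cell orbits inside $R\times R$ have length $2d$, except for the short orbits of length $d$, one per $d$-cycle, which exist only when $d$ is odd. An orbit of length $\ell$ whose symbols come from $C$ contains each symbol of $C$ exactly $\ell/c$ times. Let $2^e\,\|\,d$.
\begin{itemize}
\item A long orbit contributes $2d$ occurrences of symbols of $C$. This is divisible by $2^{e+1}$.
\item A short orbit contributes $d$ occurrences.
\end{itemize}

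\emph{Case $d$ even.} Here $c$ is odd and $c\mid d$, so $c<d$, and the boundary terms vanish.
\begin{itemize}
\item There are no short orbits, so the number of occurrences of symbols of $C$ in $R\times R$ is divisible by $2^{e+1}$.
\item It also equals exactly $c\cdot rd$. Since $r$ and $c$ are odd, $c\cdot rd$ has $2$-adic valuation exactly $e$.
\end{itemize}
This is a contradiction, so $\Gamma=\emptyset$.

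\emph{Case $d$ odd.} Reduce mod 2. The first count is $c\,rd$ minus an even number, which is odd. Long orbits contribute $2d$ occurrences, which is even. So an odd number of short orbits must carry symbols of $C$, and in particular at least one. Each short orbit carries symbols from a single cycle of $\gamma$, and there are exactly $r$ short orbits in $R\times R$. So distinct $C\in\Gamma$ claim distinct short orbits, giving $|\Gamma|\le r$.

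\emph{Main obstacle.} The delicate step is the bookkeeping of the $\fix(\beta)$ boundary when $c=d$. One must make sure that entries of $R\times\fix(\beta)$ and $\fix(\beta)\times R$ really pair up under $\sigma$, with $\sigma^2$ respecting $R$ and $\fix(\beta)$. One must also check whether any occurrence in $\fix(\beta)\times\fix(\beta)$ interferes; \cref{cy:fixsbsq12} should exclude this. If the pairing leaves a defect depending on the parity of $|\fix(\beta)|$, condition (iii) is exactly what disposes of it: when $|\fix(\beta)|$ is odd, $c<d$ is forced and the boundary terms vanish altogether.
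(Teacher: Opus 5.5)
Your overall plan is the paper's: fix $C\in\Gamma$, count the cells of $R\times R$ holding symbols of $C$, and compare with \lref{l:orb12}. All orbits there have length $2d$ except the $r$ short ones of length $d$, which exist only for odd $d$. Your treatment of the case $c<d$ is correct, and so is all of the case $d$ even. The gap is the boundary step when $c=d$.

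Write $N$ for the number of cells of $R\times R$ containing symbols of $C$, and $X$, $Y$ for the corresponding sets of cells in $R\times\fix(\beta)$ and $\fix(\beta)\times R$. Counting along the rows of $R$ gives $N=crd-|X|$. Only $X$ is subtracted here, not $X\cup Y$; counting along the columns of $R$ gives $N=crd-|Y|$, which is the same equation. So the equality $|X|=|Y|$ that you prove carries no parity information, and ``$crd$ minus an even number'' does not follow from it. The justification you actually give never uses (iii), yet (iii) cannot be dropped. Take $\beta=\gamma$ of cycle structure $d\tdot1$ with $d$ odd, so that $\sigma\in\ptp(d+1)$ by \cref{cj:1cyc12}. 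For $C$ the $d$-cycle of $\gamma$ one finds $|X|=|Y|=d$, so $N=d^2-d$ is even. The unique short orbit is occupied by the fixed symbol, not by $C$. Your hedge in the last paragraph only covers the case $|\fix(\beta)|$ odd, where $X=\emptyset$ anyway.

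The missing step, which is exactly what the paper does, is to compute $|X|$ directly. Suppose $c=d$ (so $d$ is odd), and let $(i,j',k)\in O(L)$ with $j'\in\fix(\beta)$ and $k\in C$. Then \lref{l:lcm12} gives $2\,o_\beta(i)=\lcm(2\,o_\beta(i),2)=\lcm(2,c)=2d$, so $i\in R$. Hence in each of the $f=|\fix(\beta)|$ fixed columns, all $c=d$ occurrences of symbols of $C$ lie in rows of $R$, and $|X|=fd$. Condition (iii) says that $c=d$ forces $f$ to be even. So $N=d(cr-f)$ with $cr-f$ odd, and your argument then concludes as intended.

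A minor point: in the paper's convention $(i,j,k)\sigma=(j\beta,i,k\gamma)$, not $(j,i\beta,k\gamma)$. This does not affect the pairing.
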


\begin{proof}
  Let $A$ be the submatrix of $L$ containing the cells $(i,j)$ for
  which $o_\beta(i)=o_\beta(j)=d$, and $B$ be the submatrix of $L$
  containing the cells $(i,j)$ for which $o_\beta(i)=d$ and $o_\beta(j)\ne d$.
  Assume that $C\in\Gamma$ and let $c=o(C)$.

  First suppose that some symbol $k$ of $C$ occurs in a column $j$ of
  $B$. Let $d'=o_\beta(j)$.  Since $c\mid d$ we know that
  $2d=\lcm(c,2d)=\lcm(c,2d')$ by \lref{l:lcm12}. So $d=\lcm(c,d')$, as
  $c$ is odd. Hence $d'=1$ by $(ii)$, which means that $d=\lcm(c,1)=c$.
  Now by $(iii)$, the number $f$ of fixed points of $\beta$ is even.  
  Suppose that $o_\beta(j')=1$. By \lref{l:lcm12}, each symbol of $C$
  occurs in column $j'$ of $B$, since it cannot occur in 
  column $j'$ of $L$ outside of $B$. 
  Hence the number
  of cells of $A$ that contain symbols from $C$ is $crd-fd$.
  These cells cannot be divided into orbits of length $2d$, 
  since $cr-f$ is odd. Hence, by \lref{l:orb12}, the symbols of $C$
  must fill at least one short orbit in $A$.
  
  Next suppose that no symbol of $C$ occurs in $B$.
  In that case the $crd$ cells of $A$ containing symbols of $C$ cannot
  be partitioned into orbits of length $2d$, since $cr$ is odd. So again,
  the symbols of $C$ must fill at least one short orbit in $A$.

  The number of short orbits in $A$ is $0$ if $d$ is even and $r$ if
  $d$ is odd. The result follows.
\end{proof} 

\tref{t:fixgameven12} implies that $|\fix(\gamma)|\le|\fix(\beta)|$.
\tref{t:fixgamodd12} provides another way to bound
the number of fixed points of $\gamma$,
since $\Gamma$ automatically contains all
such points, and often contains other cycles as well. For example,
if $\beta$ and $\gamma$ both have cycle structure $3^3\tdot1^2$ then
$(\id, \beta, \gamma;(12))\notin\ptp(11)$.

Our next result is a companion to \cref{cy:pow2para}.

\begin{theo}\label{t:evencyc12}
  Let $n=2^uv$ where $v$ is odd.
  Suppose that $\sigma=(\id, \beta, \gamma;(12)) \in \ptp(n)$ where
  every cycle of $\beta$ has length divisible by $2^u$. Then
\be
\item[(i)] $\beta$ has at least as many cycles of odd length as
  $\gamma$ has.
\item[(ii)] if $u\ge1$ then $\gamma$ has no cycles of odd length.
\end{enumerate}
\end{theo}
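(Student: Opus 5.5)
The plan is a parity count. For each cycle $C$ of $\gamma$, I would compare the total number of triples of $L$ whose symbol lies in $C$ with the lengths of the cell orbits described in \lref{l:orb12}. Suppose $\sigma\in\ptp(L)$ and let $C$ be a cycle of $\gamma$ of length $c$. Then $\sigma$ maps each symbol $k$ to $k\gamma$, so the set $T_C$ of triples of $L$ whose symbol is in $C$ is invariant under $\sigma$. Hence $T_C$ is a disjoint union of $\sigma$-orbits, and projecting onto the first two coordinates, the cells containing symbols of $C$ form a union of full cell orbits. Since each symbol occurs $n$ times in $L$, we have $|T_C|=cn$.

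For (ii), assume $u\ge1$, so every cycle of $\beta$ has even length divisible by $2^u$. By \lref{l:orb12}(ii),(iii), every cell orbit then has length $2d_t$ or $2\lcm(d_s,d_t)$. In either case the length is divisible by $2^{u+1}$, and there are no short orbits because there are no odd $d_t$. So $|T_C|=cn$ must be divisible by $2^{u+1}$. But if $c$ is odd, the $2$-adic valuation of $cn=c2^uv$ is exactly $u$, which is a contradiction. Hence $\gamma$ has no odd cycles. Part (i) is then immediate when $u\ge1$, since $\gamma$ has zero odd cycles.

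For (i) with $u=0$, $n$ is odd and the hypothesis on $\beta$ is vacuous. By \lref{l:orb12}, every cell orbit has even length except the short orbits. There is exactly one short orbit, of odd length $d_t$, in each block $M_{tt}$ with $d_t$ odd. So the number of short orbits equals the number of odd cycles of $\beta$. If $C$ is an odd cycle of $\gamma$, then $|T_C|=cn$ is odd, so the cells of $T_C$ must include at least one short orbit. A given short orbit carries symbols from only one cycle of $\gamma$, because it is a single $\sigma$-orbit. Therefore distinct odd cycles of $\gamma$ claim distinct short orbits, and the number of odd cycles of $\gamma$ is at most the number of odd cycles of $\beta$.

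I expect no serious obstacle. The one point needing care is the claim that the triples containing symbols of $C$ form a union of whole orbits whose lengths are exactly those listed in \lref{l:orb12}. This holds because cell orbits correspond bijectively to triple orbits when $\delta=(12)$.
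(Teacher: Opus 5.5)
Your proof is correct and is essentially the paper's argument. The triples carrying symbols from an odd cycle of $\gamma$ number $cn$, which is not divisible by $2^{u+1}$, while every non-short orbit in \lref{l:orb12} has length divisible by $2^{u+1}$, so each odd cycle of $\gamma$ must occupy a short orbit. You also spell out the injectivity step (a short orbit carries symbols from only one cycle of $\gamma$), which the paper leaves inside ``the result follows''.
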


\begin{proof}
  Let $L$ be a Latin square for which $\sigma\in\ptp(L)$.
  Suppose that $(i,j,k)\in O(L)$ where $o_\gamma(k)=c$ and $c$ is odd.
  If $a=o_\beta(i)$ and $b=o_\beta(j)$
  then $2^{u+1}\mid 2\lcm(a,b)$ by assumption.
  Since $o_\gamma(k)=c$ and the symbol $k$ occurs $n$ times in $L$,
  the total length of all orbits containing $k$ is $cn$, which is not
  divisible by $2^{u+1}$. So $k$ must appear in at least one orbit whose
  length is not divisible by $2^{u+1}$.
  By \lref{l:orb12}, the only possibility is a short orbit.
  If $u\ge1$ then there are no short orbits. If $u=0$ then there is a unique
  short orbit for each cycle of $\beta$ of odd length. The result
  follows.
\end{proof}

For example, $(\id,\beta,\gamma;(12))\notin\ptp(n)$ if $\beta,\gamma$
have respective cycle structures $6^1\tdot2^2$ and $4^1\tdot3^2$; or
$3^3$ and $3^2\tdot1^3$.

To assist in the proof of our next theorem we introduce a well-known
concept.  For each pair $(s,t)$ of symbols in a Latin square $L$,
there are one or more {\em symbol cycles} which satisfying the
following description and are minimal in the sense that no proper
subset also satisfies the conditions. A symbol cycle  
is a set of cells that each contain either $s$ or $t$, 
and such that in any row or column of $L$ the symbol cycle
includes either zero or two cells. Symbol cycles are simple
examples of trades; a new Latin square can be obtained by 
switching the symbols $s$ and $t$ throughout the cycle
(see, for example, \cite{cycsw}). Below, we will also need to 
switch parts of symbol cycles. For this purpose we make two definitions.
We call the cells on the main diagonal of $L$ {\em pivots}. 
A {\em section} $S$ of a symbol cycle $C$ can then be defined as follows.
We designate a starting point in $C$, which will be a pivot and will
be included in $S$. We then alternate moving horizontally then vertically,
stepping to the other cell that $C$ contains in the same row or column,
respectively. Each cell that we visit is included in $S$. We stop immediately
upon including a second pivot in $S$.

\begin{theo}\label{t:betaid12}
  $\sigma=(\id,\id,\gamma;(12))\in\ptp(n)$ if and only if the cycle
  structure of $\gamma$ is $2^r\tdot 1^f$ for integers $r\ge0$ and $f\ge1$.
\end{theo}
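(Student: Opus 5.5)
The plan is to prove necessity from the results of \sref{s:basic} and this section, and sufficiency by explicit constructions. Throughout we use that $(\id,\id,\gamma;(12))\in\ptp(L)$ is equivalent to $L(j,i)=L(i,j)\gamma$ for all $i,j$.

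\emph{Necessity.} Suppose $\sigma=(\id,\id,\gamma;(12))\in\ptp(n)$. The order of $\beta=\id$ is $b=1$, so \tref{t:ord12} gives $c\mid 2$. Hence every cycle of $\gamma$ has length $1$ or $2$, and $\gamma$ has cycle structure $2^r\tdot1^f$. Every point of $[n]$ is a fixed point of $\beta=\id$, i.e.\ a cycle of odd length $d=1$. By \tref{t:cycdiv12}, $\gamma$ then has a cycle of length dividing $1$, so $f\ge1$. (More directly: each diagonal cell forms a short orbit, so $L(i,i)=L(i,i)\gamma$.)

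\emph{Sufficiency, $n$ even.} Here $f=n-2r$ is even, hence $f\ge2$. Take a symmetric unipotent Latin square $S$ of order $n$, for instance one coming from a $1$-factorisation of $K_n$, with constant diagonal symbol $\infty_1$. Label the symbol classes $M_k$ (perfect matchings of $K_n$) by the fixed symbols $\infty_2,\dots,\infty_f$ and by the pairs $\{s,t\}$ forming the $2$-cycles of $\gamma$, and leave the fixed classes as they are. For a pair $\{s,t\}$, the union $M_s\cup M_t$ is a disjoint union of even cycles of $K_n$. Orient each cycle cyclically and put $s$ in cell $(i,j)$ and $t$ in cell $(j,i)$ for every arc $i\to j$. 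Every vertex has one outgoing and one incoming arc, so each row and each column receives exactly one $s$ and one $t$, in the cells previously occupied by $M_s\cup M_t$. Hence the result is a Latin square, and it satisfies $L(j,i)=L(i,j)\gamma$ by construction.

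\emph{Sufficiency, $n$ odd.} Here $f$ is odd. When $f\ge3$ we use prolongation. Start from the even-order square $L'$ of order $n-1$ with $f-1\ge2$ fixed symbols built above, whose diagonal is constantly $\infty_1$. Replace each diagonal cell by a new fixed symbol $\infty_f$, put $\infty_1$ in cells $(i,n)$ and $(n,i)$ for $i<n$, and put $\infty_f$ in $(n,n)$. All changed cells hold fixed symbols and are placed symmetrically, so the relation $L(j,i)=L(i,j)\gamma$ is preserved.

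The remaining case, $n$ odd and $f=1$, is where I expect the main difficulty, and it is presumably why the paper introduces pivots and sections of symbol cycles. The first attempt will be as follows. Start from a symmetric idempotent square of odd order, e.g.\ $L(i,j)=(i+j)/2$ over $\Z_n$, with the unique fixed symbol $\infty$ placed on one pivot. For each pair $\{s,t\}$, $M_s\cup M_t$ consists of even cycles, which are oriented as above, together with one path (a section) joining the pivot of $s$ to the pivot of $t$. The paths can be oriented in the same way, but the two pivot cells then need repair. To repair them, we would switch the $\infty$--$s$ and $\infty$--$t$ symbol cycles through these pivots so that $\infty$ moves onto them, making the whole diagonal $\infty$, while keeping the transposed placement of the displaced symbols consistent. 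This requires tracking how the sections interact. If that proves unmanageable, the fallback is a direct cyclic construction over $\Z_n$ with $\infty$ on the diagonal and the pairs arranged along difference classes. If needed, small $n$ would be checked by hand.
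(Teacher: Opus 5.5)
Your necessity argument is correct and is the paper's own: \tref{t:ord12} with $b=1$ forces $\gamma^2=\id$, and \tref{t:cycdiv12} forces a fixed point. Your even-$n$ construction is also correct, and it takes a genuinely different route from the paper. Orienting the even cycles of $M_s\cup M_t$ in a $1$-factorisation of $K_n$ avoids the pivot-and-section surgery on $\cyc_n$ that the paper uses.

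The gap is in the odd case, where your prolongation does not give a Latin square. The diagonal of $L'$ is constantly $\infty_1$, so placing $\infty_1$ in $(i,n)$ and $(n,i)$ for every $i<n$ fills row $n$ and column $n$ with $n-1$ copies of $\infty_1$; overall $\infty_1$ then occurs $2(n-1)$ times. Prolongation needs a transversal, and this $L'$ has no usable one when $f\ge3$. To keep $L(j,i)=L(i,j)\gamma$ with the new symbol fixed, the transversal $T$ must be closed under transposition. Hence every fixed symbol in $T$ must sit on a diagonal cell of $T$, since an off-diagonal occurrence at $(i,j)$ would repeat at $(j,i)$. But every diagonal cell of $L'$ holds $\infty_1$, so $T$ cannot contain $\infty_2$, which a transversal must. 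So odd $n$ with $3\le f<n$ is not covered; $f=n$ is trivial via the symmetric square $\cyc_n$.

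The case $n$ odd, $f=1$ is only a plan, and you have the difficulty backwards: your own fallback settles it at once. Over $\Z_n$, the square $L(i,j)=j-i$ satisfies $L(j,i)=-L(i,j)$, and for odd $n$ negation on $\Z_n$ has cycle structure $2^{(n-1)/2}\tdot1$. The same square also gives the missing cases, as follows.
\begin{enumerate}
\item Write $f=2g+1$, and keep the difference classes $\pm d$ with $g<d\le(n-1)/2$ as the $r$ transposed pairs.
\item Take a proper $(2g+1)$-edge-colouring of the $2g$-regular circulant graph on $\Z_n$ with connection set $\{\pm1,\dots,\pm g\}$; one exists by Vizing's theorem.
\item Refill each cell with $j-i\in\{\pm1,\dots,\pm g\}$ with the colour of the edge $\{i,j\}$, and each diagonal cell with the unique colour missing at that vertex.
\end{enumerate}
Each colour class is then a symmetric permutation matrix, i.e.\ a fixed symbol. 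Without something of this kind, or the paper's surgery on $\cyc_n$, sufficiency for odd $n$ remains unproved.
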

 
\begin{proof}
Suppose that $\sigma=(\id,\id,\gamma;(12))\in\ptp(n)$. Then \tref{t:ord12}
shows that $\gamma^2=\id$ and \tref{t:cycdiv12} shows that $\gamma$
has a fixed point. Hence the cycle structure of $\gamma$ is as claimed.

Conversely, suppose that $\gamma$ has cycle structure $2^r\tdot 1^f$,
where $r\ge0$ and $f\ge1$. We now explain a process for constructing a
Latin square $L$ with $\sigma\in\ptp(L)$. Initially, we take $L$ to be
the cyclic square $\cyc_n$.  Then for $a=1,2,\ldots,r$ we undertake
the following steps, which we will describe as {\em surgery for $a$}.
We first identify the symbol cycles for the pair of symbols $(a,n-a)$.
These will be of two types depending on whether the cycle includes any
pivot or not. The symbol cycles that contain no pivot come in pairs
that are images of each other under transposition of $L$. We switch
the symbols throughout one entire cycle in each such pair of
cycles. Any symbol cycle $C$ that contains a pivot requires more
care. We first argue that $C$ contains exactly two pivots. It is clear
that the number of pivots in $C$ must be even since there are an even
number of cells in $C$ overall, and non-pivots are paired up by
transposition. Also, it is not hard to see that any section and its
image under transposition together form a complete symbol cycle.
Hence $C$ has two pivots as claimed. We will switch the symbols in one
section of $C$ and leave the rest of $C$ unaltered.  The details
depend on $n$, as follows.

Assume that $n$ is odd. Then
one pivot in $C$ contains the symbol $a$ and the other has the symbol $n-a$.
Suppose the former is cell $(i,i)$ and the latter is cell $(n+1-i,n+1-i)$.
We switch the section starting from $(i,i)$ and then put
$L(n+1-i,i)=a$ and $L(i,n+1-i)=n-a$.

Now assume that $n$ is even.
If $a$ is even then there are no relevant pivots, so assume $a$ is odd.
We switch the section starting at $((a+1)/2,(a+1)/2)$ and the section
starting at $(n-(a-1)/2,n-(a-1)/2)$. Then we put
$L((a+1)/2,n+1-(a+1)/2)=n-a$ and
$L(n+1-(a+1)/2,(a+1)/2)=a$. If $n\equiv0\!\mod4$ then put
$L((n+a+1)/2,(n-a+1)/2)=a$ and
$L((n-a+1)/2,(n+a+1)/2)=n-a$. Meanwhile, for
$n\equiv2\!\mod4$ we put
$L((n+a+1)/2,(n-a+1)/2)=n-a$ and
$L((n-a+1)/2,(n+a+1)/2)=a$.

For all values of $n$ the final step is to place the symbol $n$ in all
pivots of sections in which we have switched.
This completes our description of surgery for $a$.
It is now routine to check that it has the following properties. 
Surgery for $a$ arranges the symbols $a$ and $n-a$ in such a way
that if cell $(i,j)$ contains $a$ then cell $(j,i)$ contains $n-a$.
Moreover, the only places that $L$ changes are cells containing
$a$, $n-a$ or $n$. The last of these options only affects
cells in the row and column of the pivots that contain $a$ or $n-a$.
It follows that if $1\le a<a'\le r$ then surgery for $a$ affects 
cells that are distinct from those affected by surgery for $a'$.
Hence we can do surgery for each $a=1,2,\dots,r$ and the result
will be a Latin square having (a paratopism conjugate to)
$\sigma$ as an autoparatopism.
\end{proof}

We next present examples of the Latin squares constructed in
\tref{t:betaid12}, where the cycle structure of $\gamma$ is
$2^4\cdot 1^2$ in the left hand example and $2^3\cdot 1^5$ in the right
hand example.
\[
\narrowcols
\left[
\begin{array}{cccccccccc}
 10& 8& 3& 6& 5& 4& 7& 2& 1& 9\\ 
  2&10& 4& 5& 6& 3& 8& 9& 7& 1\\ 
  7& 6& 5& 4& 3& 2& 1&10& 9& 8\\ 
  4& 5& 6&10& 8& 9& 3& 1& 2& 7\\ 
  5& 4& 7& 2&10& 1& 9& 8& 3& 6\\ 
  6& 7& 8& 1& 9&10& 2& 3& 4& 5\\ 
  3& 2& 9& 7& 1& 8&10& 6& 5& 4\\ 
  8& 1&10& 9& 2& 7& 4& 5& 6& 3\\ 
  9& 3& 1& 8& 7& 6& 5& 4&10& 2\\ 
  1& 9& 2& 3& 4& 5& 6& 7& 8&10\\
\end{array}
\right]
\qquad
\left[
\begin{array}{ccccccccccc}
   11& 9& 3& 4& 5& 6& 7& 8& 2& 1&10\\
    2&11& 4& 5& 6& 7& 3& 9&10& 8&1\\
    8& 4& 5& 6& 7& 3& 9& 1&11&10&2\\
    4& 5& 6& 7& 3& 2&10&11& 1& 9&8\\
    5& 6& 7& 8&11& 1& 2&10& 9& 3&4\\
    6& 7& 8& 9&10&11& 1& 2& 3& 4&5\\
    7& 8& 2& 1& 9&10&11& 3& 4& 5&6\\
    3& 2&10&11& 1& 9& 8& 4& 5& 6&7\\
    9& 1&11&10& 2& 8& 4& 5& 6& 7&3\\
   10& 3& 1& 2& 8& 4& 5& 6& 7&11&9\\
    1&10& 9& 3& 4& 5& 6& 7& 8& 2&11\\
\end{array}\right].
\endgroup
\]

Squares having the symmetry discussed in \tref{t:betaid12} in the
particular case $f=1$ have been called {\em pairing squares}.
Some interesting properties of these squares were proven in 
\cite{atomic11} and \cite{diagcyc}.

In the remainder of this section we aim to build a number of Latin
squares with prescribed autoparatopisms. The target autoparatopism
will not be fully specified, but rather we will only know the cycle
structure of the permutations from which it is built. Without loss of
generality we will assume that these permutations are canonical to avoid
the need for caveats like the parenthetical phrase in the last sentence
of the proof of \tref{t:betaid12}.

\begin{theo}\label{t:bet1cyc12} 
  Suppose that $\sigma=(\id, \beta, \gamma;(12)) \in \grp_n$ and the
  cycle structure of $\beta$ is $n^1$. Then $\sigma \in \ptp(n)$ if
  and only if 
\be
\item[(i)] the length of each cycle in $\gamma$ divides $2n$ and
\item[(ii)] at most one cycle of $\gamma$ has odd length.
\end{enumerate}
\end{theo}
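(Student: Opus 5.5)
Necessity follows from earlier results, and sufficiency reduces to filling a single row by means of a contour; I expect the bookkeeping for an odd cycle of $\gamma$ to be the main obstacle.

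\emph{Necessity.} Since $\beta$ has order $n$, \tref{t:ord12} gives that the order of $\gamma$ divides $2n$. Hence every cycle length of $\gamma$ divides $2n$, which is (i). For (ii), every cycle of $\beta$ has length $n$, so the hypothesis of \tref{t:evencyc12} holds with $n=2^uv$.
\begin{itemize}
\item If $u\ge1$, part (ii) of that theorem shows that $\gamma$ has no odd cycles.
\item If $u=0$, part (i) shows that $\gamma$ has at most as many odd cycles as $\beta$, which has exactly one.
\end{itemize}
In either case (ii) holds.

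\emph{Sufficiency: reduction to row $1$.} Take $\beta=(1\,2\,\cdots\,n)$ and index by $\Z_n$. The first step is to observe that the autoparatopism condition reduces to a condition on row $1$ alone. Because $\sigma^2=(\beta,\beta,\gamma^2;\id)$, we must have $L(i+1,j+1)=L(i,j)\gamma^2$. So row $i+1$ is the $\gamma^2$-image of row $i$, and $L$ is determined by its first row. Moreover, $\sigma$ itself gives $L(j\beta,i)=L(i,j)\gamma$, so column $j\beta$ is the $\gamma$-image of row $j$. Therefore, if row $1$ is a permutation of $[n]$ and the resulting array is consistent with $\sigma$, then $L$ is a Latin square with $\sigma\in\ptp(L)$.

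Consistency is a condition on cell orbits (\lref{l:orb12}). Following the orbit of the triple $(1,1+q,k)$, I will check that it returns to row $1$ at the cell $(1,n-q)$ with symbol $k\gamma^{-(2q+1)}$. So row $1$ is partitioned into pairs of positions $\{q,\,n-1-q\}$ carrying symbols $\{k,\,k\gamma^{-(2q+1)}\}$. When $n$ is odd there is also the self-paired middle position $q=(n-1)/2$. That position must carry a symbol $k$ with $k\gamma^{n}=k$, i.e.\ a symbol in a cycle of length dividing $n$; this is the short orbit. The task is thus to distribute the symbols of each $\gamma$-cycle over these position pairs so that every symbol appears exactly once in row $1$.

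\emph{Sufficiency: construction.}
\begin{itemize}
\item \emph{Even cycles.} Let $C$ be a cycle of $\gamma$ of even length $c$ with leading symbol $t$. Assign to $C$ a block of $c/2$ consecutive values $q_0,q_0+1,\dots,q_0+c/2-1$. Put $t\gamma^{2m}$ at position $q_0+m$, which forces $t\gamma^{2m-(2(q_0+m)+1)}=t\gamma^{-2q_0-1}\gamma^{0}\cdot\gamma^{0}$, i.e.\ $t\gamma^{-2q_0-1}$ shifted along with $m$, at position $n-1-q_0-m$. The offset $-2q_0-1$ is the same for every $m$ and is odd. Hence the $c/2$ even powers and the $c/2$ partner powers are disjoint and together exhaust $C$.
\item \emph{Odd cycle.} If $\gamma$ has an odd cycle, then by (i) and (ii) $n$ is odd and that cycle has length $c\mid n$; by parity, $n=\sum o(C)$ forces exactly one odd cycle. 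Put one of its symbols in the middle position, and pair the remaining $c-1$ symbols using a block of $(c-1)/2$ consecutive $q$'s as above. The offset now has to be chosen so that the pairs avoid the middle symbol.
\end{itemize}
Finally, I will check that the blocks fit: there are exactly $\lfloor n/2\rfloor$ position pairs, and they are used up exactly because $\sum_C \lfloor o(C)/2\rfloor=\lfloor n/2\rfloor$ under (ii).

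\emph{Main obstacle.} I expect the hardest step to be the odd cycle, in particular $c=1$ or $c$ a proper divisor of $n$. The constant-offset trick that works for even cycles must be adapted so that the $(c-1)/2$ pairs together with the middle symbol partition the cycle. I would first try sending the middle symbol to $t$ and pairing $t\gamma^{x}$ with $t\gamma^{-x}$. This requires position offsets that vary with the pair, so I would then instead choose the block of $q$'s non-consecutively, or place the odd cycle's block symmetrically about the middle, so that the required offsets $-(2q+1)\bmod c$ produce a perfect matching of $C\setminus\{t\}$.
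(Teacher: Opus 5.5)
Your necessity argument is correct: it uses \tref{t:ord12} and \tref{t:evencyc12} where the paper uses \lref{l:orb12} and \tref{t:fixgamodd12}. Your reduction to row~$1$ is also correct: position $q$ is paired with position $n-1-q$, and symbol $k$ with $k\gamma^{-(2q+1)}$. This is just the paper's contour read off in row~$1$.

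The gap is in the construction. If you put $t\gamma^{2m}$ at position $q_0+m$, the partner position $n-1-q_0-m$ receives $t\gamma^{2m-(2q_0+2m+1)}=t\gamma^{-2q_0-1}$, which does not depend on $m$; your own displayed computation already shows this. The offset $-(2q+1)$ is not the same for every $m$: it drops by $2$ each time $q$ increases by $1$, and this exactly cancels the step of $2$ in your exponent. So for every even cycle of length $c\ge4$, one symbol is written $c/2$ times in row~$1$ and $c/2-1$ symbols of the cycle never appear. For example, $n=4$, $\gamma=(1\,2\,3\,4)$, $q_0=0$ gives row~$1$ equal to $(1,3,4,4)$. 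The odd cycle is also never actually placed; you only list strategies you might try.

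The missing idea is to advance the position by $2$, not by $1$. Let $h=\lceil n/2\rceil+1$. Since $q+(n-1-q)\equiv-1\pmod n$, the positions $h-2i$ for $1\le i\le\lceil n/2\rceil$ contain exactly one member of each pair $\{q,n-1-q\}$, and for odd $n$ the value $i=1$ gives the middle position $(n-1)/2$.

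Give each cycle of length $c$ a block of $\lceil c/2\rceil$ consecutive values of $i$, with the odd cycle (if any) first. Put $t\gamma^{-2(i-1)}$ at position $h-2i$; its partner then receives $t\gamma^{2i+1-2h}$.
\begin{itemize}
\item For even $c$, the main exponents in a block are $c/2$ distinct even residues and the partner exponents are $c/2$ distinct odd residues modulo $c$. So the cycle is covered exactly.
\item For the odd cycle, where $c\mid n$, the main exponents are $0,-2,\dots,-(c-1)$. The partner exponents are $2-n,4-n,\dots,c-1-n$, which are $\equiv 2,4,\dots,c-1\pmod c$. Together these partition $\Z_c$.
\end{itemize}
With this placement the odd cycle causes no extra difficulty. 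This is precisely the paper's contour $C(i,h-i)=t_r$ for $e_{r-1}<i\le e_r$: the orbit of the cell $(i,h-i)$ meets row~$1$ at position $h-2i$ with symbol $t_r\gamma^{-2(i-1)}$.
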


\begin{proof}
Suppose $\sigma \in \ptp(L)$ for some Latin square $L$  
and that $\gamma$ has a cycle of length $c$.  By \lref{l:orb12}, all
orbits have length $n$ or $2n$, so $c\mid 2n$.  If $c$ is odd then $c\mid n$.
By \tref{t:fixgamodd12}, there can be no such cycle when $n$ is even,
and at most one such cycle when $n$ is odd. Hence $(i)$ and $(ii)$ hold.

It remains to show sufficiency. Assume that $(i)$ and $(ii)$ hold.
Suppose that $\gamma$ has $c$ different cycles, with lengths
$d_1,\dots,d_c$. Since $n=\sum d_i$ we know from $(ii)$ that 
$\gamma$ has no cycles of odd length if $n$ is even.
Also, if $n$ is odd then $\gamma$ must have one 
cycle of odd length. For convenience, we assume that $d_1$ is odd
if $n$ is odd. Define $h=\lceil n/2\rceil+1$ and let 
$e_p=\sum_{j=1}^{p}\lceil d_j/2\rceil$ for $0\le p\le c$.
Then the contour of a Latin square $L$ such that $\sigma\in\ptp(L)$ can be 
constructed by putting
$C(i,h-i)=t_r$ for $1\le r \le c$ and $e_{r-1} < i \le e_r$.
\end{proof}

\begin{coro}\label{cy:dirpro}
  Suppose that $\sigma=(\id, \beta, \gamma;(12)) \in \grp_n$, the
  cycle structure of $\beta$ is $d^r$ and the cycle
  structure of $\gamma$ is $d_1^{ra_1}\tdot d_2^{ra_2}\cdots d_p^{ra_p}$.
  Then $\sigma \in \ptp(n)$ if $d_i\mid 2d$ for $1\le i\le p$, and
  at most one $d_i$ is odd.
\end{coro}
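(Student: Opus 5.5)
The plan is a direct product construction. By \tref{t:bet1cyc12}, applied with $n$ replaced by $d$, there is a Latin square $L_1$ of order $d$ with $\sigma_1=(\id,\beta_1,\gamma_1;(12))\in\ptp(L_1)$. Here $\beta_1$ is a $d$-cycle and $\gamma_1$ has cycle structure $d_1^{a_1}\tdot d_2^{a_2}\cdots d_p^{a_p}$. Note that $\sum a_id_i=d$, because $\sum ra_id_i=n=rd$. The hypotheses of \tref{t:bet1cyc12} hold: each $d_i\mid 2d$, and at most one cycle of $\gamma_1$ has odd length. For the latter, if some $d_i$ is odd, then $d_i\mid d$ forces parity considerations; more simply, I will check that the hypothesis ``at most one $d_i$ odd'' together with $\sum a_id_i=d$ gives $a_i=1$ for the odd $d_i$. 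If that fails, I will treat the statement as implicitly requiring it, or realize the odd cycles via the contour with $d_1$ odd as in that proof.

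Next, take a totally symmetric Latin square $T$ of order $r$ (\lref{l:totsym}), so that $(\id,\id,\id;(12))\in\ptp(T)$, i.e.\ $T$ is symmetric. Form the direct product $L=L_1\times T$ on the set $[d]\times[r]$, defined by $L((x,u),(y,v))=(L_1(x,y),T(u,v))$. The paratopism $(\id,\beta_1\times\id,\gamma_1\times\id;(12))$ maps the triple $((x,u),(y,v),(z,w))$ to $((y\beta_1,v),(x,u),(z\gamma_1,w))$. This is a triple of $L$ because $(y\beta_1,x,z\gamma_1)\in O(L_1)$ and $(v,u,w)\in O(T)$.

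The cycle structure of $\beta_1\times\id$ is $d^r$. The cycle structure of $\gamma_1\times\id$ is $d_1^{ra_1}\cdots d_p^{ra_p}$. So the constructed paratopism is conjugate to $\sigma$ by \tref{t:conj}, and \lref{l:conjauto} gives $\sigma\in\ptp(n)$.

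The only delicate point is verifying that the parameters satisfy condition (ii) of \tref{t:bet1cyc12} for $\gamma_1$. The product step itself is routine.
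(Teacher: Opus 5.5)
Your construction is the paper's: take $L_1$ from \tref{t:bet1cyc12} for the case $r=1$ and form its direct product with a symmetric square of order $r$. The paper uses $\cyc_r$ and you use a totally symmetric square; either works. Your verification of the product step, and of the resulting cycle structures $d^r$ and $d_1^{ra_1}\cdots d_p^{ra_p}$, is correct.

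The trouble is the point you left open, and neither of your proposed ways of settling it works. The check you intend to carry out is false: ``at most one $d_i$ is odd'' together with $\sum a_id_i=d$ does not force $a_i=1$ for the odd $d_i$. For example, $d=4$, $r=1$ and $\gamma$ of type $2\tdot1^2$ (so $d_1=2$, $a_1=1$, $d_2=1$, $a_2=2$) satisfies the stated hypotheses. So does $d=5$, $r=1$ and $\gamma$ of type $2\tdot1^3$.

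Your fallback of realising several odd cycles through a modified contour is also impossible, because \tref{t:bet1cyc12} is an equivalence. In both examples $\sigma\notin\ptp(d)$. For $d=4$, \tref{t:fixgameven12}(i) applied to the single $4$-cycle of $\beta$ already forces $\gamma$ to have no fixed points; see also the entries for $n=4,5$ in \Tref{T:cat12part1}. So for $r=1$ the corollary, read literally, is false, and no argument can close this step.

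The only workable option is your second one, made explicit. The hypothesis must say that $\gamma_1$, of type $d_1^{a_1}\cdots d_p^{a_p}$, has at most one cycle of odd length, i.e.\ $a_i=1$ whenever $d_i$ is odd. This is exactly what the paper's one-line proof tacitly assumes when it says \tref{t:bet1cyc12} supplies $L_1$. It also holds in every later application of the corollary (\tref{t:dsq12}, \tref{t:2diffcyc12}, \tref{t:k10}). With the hypothesis corrected in this way your argument is complete. You should state that correction explicitly rather than leave three alternatives open.
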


\begin{proof}
  \tref{t:bet1cyc12} provides a solution $L_1$ in the case $r=1$. For
  larger values of $r$, simply take the direct product of $L_1$ with $\cyc_r$.
\end{proof}

Our next result seems to depart from the principle of only considering
paratopisms of the form $(\id,\beta,\gamma;(12))$. However, it will
have several corollaries that deal with paratopisms of that form, so
can still be considered part of the same agenda.

\begin{theo}\label{t:1nontriv12}
  Suppose $\alpha \in \sym_n$ has cycle type $d^1\tdot 1^f$ where $d>1$. Let
  $\sigma=(\alpha, \alpha, \alpha;(12)) \in \grp_n$. Then
  $\sigma\in\ptp(n)$ if and only if one of the following is satisfied.
\be
\item[(i)] $d$ is odd and $f \in \{ 0, 1\}$, 
\item[(ii)] $d \equiv 0\mod4$ and $f \le d/2$, or
\item[(iii)] $d \equiv 2\mod4$ and $1 \le f \le d/2+1$.
\end{enumerate}
\end{theo}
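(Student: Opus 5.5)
By \tref{t:conj}, $\sigma=(\alpha,\alpha,\alpha;(12))$ is conjugate to $(\id,\alpha^2,\alpha;(12))$: the $2$-cycle $(12)$ of $\delta$ contributes $\alpha\alpha=\alpha^2$, and the $1$-cycle $(3)$ contributes $\alpha$. So by \lref{l:conjauto} I will work with $\beta=\alpha^2$ and $\gamma=\alpha$. If $d$ is odd, $\beta$ has cycle structure $d\tdot1^f$ and $\gamma$ has $d\tdot1^f$. If $d$ is even, $\beta$ has cycle structure $(d/2)^2\tdot1^f$ and $\gamma$ has $d\tdot1^f$. The proof splits into necessity, using the general constraints of \sref{s:basic} and \sref{s:12}, and sufficiency, using constructions.

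\emph{Necessity.} For $d$ odd, apply \tref{t:fixgameven12}(ii) to the unique $d$-cycle of $\beta$ ($r=1$), which gives $f\le 1$. For $d$ even, apply \tref{t:fixgameven12} to the cycles of length $d/2$, where $r=2$. This gives $f\le d/2$ when $d/2$ is even, i.e.\ $d\equiv0\bmod4$, and $f\le d/2+1$ when $d/2$ is odd, i.e.\ $d\equiv2\bmod4$. It remains to exclude $f=0$ when $d\equiv 2\bmod 4$. In that case $\beta$ has an odd cycle of length $d/2$, so \tref{t:cycdiv12} requires $\gamma$ to have a cycle whose length divides $d/2$. But $\gamma$ is a single $d$-cycle, so this is impossible. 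This gives (i)--(iii) as necessary conditions.

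\emph{Sufficiency.} For $d$ odd with $f\in\{0,1\}$, \tref{t:fromBBW}(i),(ii) already gives the result, since $n=d$ is odd when $f=0$. For $d$ even I will build a contour for $(\id,\beta,\gamma;(12))$ with $\beta=\beta_1\beta_2\beta_3\cdots$ canonical, where $\beta_1,\beta_2$ are the two $(d/2)$-cycles and the rest are fixed points. The cell-orbit structure from \lref{l:orb12} and \lref{l:lcm12} dictates the layout as follows.
\begin{enumerate}[topsep=2pt,itemsep=2pt,parsep=0pt]
\item The $f\times f$ block on $\fix(\beta)$ must be a subsquare on $\fix(\gamma)$. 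I will take it to be a symmetric Latin square, for example $\cyc_f$, which is symmetric.
\item The blocks between fixed rows and the $\beta_1,\beta_2$ columns, and their transposes, carry the $d$-cycle symbols.
\item The $d\times d$ block on $\beta_1\cup\beta_2$ must contain each fixed symbol $d$ times. Each of the $d/2$ cell orbits through $M_{12}\cup M_{21}$ has length $d$, so it can host one fixed symbol. When $d/2$ is odd, the two short orbits in $M_{11}$ and $M_{22}$ (each of length $d/2$) can jointly host one more fixed symbol. This accounts exactly for the bounds $d/2$ and $d/2+1$.
\item All remaining orbits are filled with $d$-cycle symbols, placed along shifted anti-diagonals in the manner of the proof of \tref{t:bet1cyc12} (contour entries $C(i,h-i)=t_1$).
\end{enumerate}

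The main obstacle is writing this contour explicitly and uniformly in $f$ so that every row and column receives each symbol exactly once. In particular, the $d$-cycle symbols in the fixed rows must interlock with the gaps left in the $\beta_1\cup\beta_2$ block after the fixed symbols are placed. If a uniform contour proves awkward, I would first try to handle the extreme values of $f$ (such as $f=d/2$ or $f=d/2+1$) and then reduce smaller $f$ to them. If that fails, I would fall back on a separate contour for each residue of $d$ modulo $4$, keeping the case $f=0$ (allowed only when $d\equiv0\bmod4$) as its own special case.
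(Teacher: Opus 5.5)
The genuine gap is sufficiency for $d$ even; a second problem, at $d=2$, is a defect of the statement itself.

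Your necessity argument is correct for $d\ge3$, and it is a tidier route than the paper's. You pass to the conjugate $(\id,\alpha^2,\alpha;(12))$, read off the upper bounds from \tref{t:fixgameven12} (with $r=1$, resp.\ $r=2$ cycles of length $d/2$), and get $f\ge1$ from \tref{t:cycdiv12}. The paper instead redoes the orbit analysis directly on $(\alpha,\alpha,\alpha;(12))$ and gets $f\ge1$ by applying $\sigma^{d/2}$ to the triple in cell $(i,i\alpha^{d/2})$.

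\textbf{Missing construction for $d$ even.} For (ii) and (iii) you only record where fixed symbols may sit and how many cell orbits of each type exist. You then concede that writing the contour is ``the main obstacle''. That construction is the whole content of the existence direction. The counting in your item~3 shows the numbers are compatible. It does not show that the $d$-cycle symbols can be spread over the remaining orbits, including the blocks meeting $\fix(\beta)$, so that every row and column is Latin. Such counting compatibility is not sufficient in general; compare \tref{t:5sq}.

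Neither fallback is a proof. In particular there is no evident reduction from large $f$ to small $f$, since changing $f$ changes $n$. No earlier result supplies the square for free either: for instance \cref{cy:dirpro} would need the multiplicity of $d$ in $\gamma$ to be even. The paper closes this with two explicit contours, written directly for $(\alpha,\alpha,\alpha;(12))$ and completed by a symmetric subquasigroup on $\fix(\alpha)$:
\begin{itemize}
\item one for $d\equiv0\pmod4$, $0\le f\le d/2$;
\item one for $d\equiv2\pmod4$, $1\le f\le d/2+1$.
\end{itemize}
You need something equivalent.

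\textbf{The case $d=2$.} Your step ``apply \tref{t:fixgameven12} to the cycles of length $d/2$, where $r=2$'' breaks down at $d=2$. Those two cycles are then fixed points of $\beta=\alpha^2=\id$, so $r=f+2$ and the bound is vacuous. This cannot be patched, because the statement is false there. For $d=2$, $\sigma$ is conjugate to $(\id,\id,\alpha;(12))$, which lies in $\ptp(n)$ for every $f\ge1$ by \tref{t:betaid12}. See the entry $\beta=1^5$, $\gamma=2\tdot1^3$ in \Tref{T:cat12part1}, which contradicts the bound $f\le d/2+1=2$ in (iii). The paper's proof has the same blind spot. Its appeal to \lref{l:lcm12} to force $o_\alpha(j)=d$ yields nothing when $d=2$, since then $o_{\alpha^2}(j)=1$ for every $j$. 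So (iii) should be read with $d>2$; for $d=2$ the correct condition is simply $f\ge1$.
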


\begin{proof} Throughout, $L$ will denote a hypothetical Latin square for
which $\sigma\in\ptp(L)$. We start by showing the necessity of conditions
$(i)$ to $(iii)$.
Fix a row $i$ such that $o_\alpha(i)=d$
and consider $(i,j,k)\in O(L)$ for which $o_\alpha(k)=1$.
By \lref{l:lcm12} we know that $o_\alpha(j)=d$, so $j$ and $i$ are from
the same orbit of $\alpha$. 

For the moment, assume that $d$ is odd. 
The orbit of the cell $(i,j)$ is
\[
\big\{(i\alpha^{2r}, j\alpha^{2r}):0\le r<d\big\}
\cup\big\{(j\alpha^{2r+1}, i\alpha^{2r +1}):0\le r<d\big\}.
\]
Since $d$ is odd there is an integer
$r$ such that $i=j\alpha^{2r+1}$. For this $r$ it must be the
case that $j=i\alpha^{2r+1}$, otherwise the symbol $k$ would occur
in two different cells in row $i$. It follows that $i=i\alpha^{4r+2}$,
so $4r+2$ is divisible by $d$. As $d$ is odd, it must divide
$2r+1$, but this means that $j=i\alpha^{2r+1}=i$. As there is only
one choice for $j$, we conclude that $f\le1$.

Next we assume that $d$ is even. In this case
the orbit of the cell $(i,j)$ is 
\[
\big\{(i\alpha^{2r}, j\alpha^{2r}):0\le r<d/2\big\} \cup
\big\{(j\alpha^{2r+1}, i\alpha^{2r +1}):0\le r<d/2\big\}.
\]
Suppose that $j=i\alpha^t$ for odd $t$ in the range $0<t<d$.
Then to avoid symbol $k$ being repeated in column $j$ we must have
$i=j\alpha^t=i\alpha^{2t}$. Hence $d$ divides $2t$, which can only mean
that $t=d/2$.
In other words $j\notin\{i\alpha^t:t=1,3,5,\dots,d-1\}\setminus\{i\alpha^{d/2}\}$
from which it follows that $f\le d/2$ when $d\equiv0\mod4$, 
and $f\le d/2+1$ when $d\equiv2\mod4$.

To complete the proof of necessity suppose that $d\equiv2\mod4$ and
consider the symbol $k'$ for which
$(i,i\alpha^{d/2},k')\in O(L)$. Applying $\sigma^{d/2}$ we find that
$(i,i\alpha^{d/2},k'\alpha^{d/2})\in O(L)$ which implies that
$o_\alpha(k')\mid (d/2)$. The only possibility is that $o_\alpha(k')=1$.
Hence $f\ge1$ when $d\equiv2\mod4$.

It remains to prove sufficiency of conditions $(i)$ to $(iii)$. For $(i)$ we
simply invoke \tref{t:fromBBW}. For $(ii)$
a contour for a Latin square $L$ such that
$\sigma\in\ptp(L)$ when $f \le d/2$ is as follows.
\begin{align*}
C(d-1,d) &= t_1,\\
C(i, d-i) &= t_1, &&\text{for } 1 \le i \le d/2-f, \\
C(d/2-f+i, d/2+f-i)&=\infty_i, &&\text{for } 1 \le i \le f, \\
C(d/2-f+i, \infty_i) = C(d+i, d/2+f-i) &=t_1, &&\text{for } 1 \le i \le f, \\
C(d/2+2i-1, d/2-2i) &=t_1, && \text{for } 1 \le i \le d/4-1, \\
C(d/2+2i, d/2+1-2i) &=t_1, && \text{for } 1 \le i \le d/4,
\end{align*}
together with any symmetric subquasigroup on the fixed points
of $\alpha$.

While for $(iii)$ we have the following contour.
Let $q=(d-2)/4$. Take,
\begin{align*}
C(q+1,d-q)=C(d-q,q+1)&=\infty_1,\\
C(d+1,q+1)=C(q+1,d+1)&=t_1,\\
C(i, d+1-i) = C(d+1-i, i) &=t_1, && \text{for } 1 \le i \le q, \\
C(q+i,d+i) = C(d+i, d-q+2-i) &=t_1, && \text{for } 2 \le i \le f, \\
C(q+i,d-q+2-i) &=\infty_i, && \text{for } 2 \le i \le f, \\
C(q+i,d-q+2-i) &=t_1, && \text{for } f+1 \le i \le d/2+1, 
\end{align*}
and add any symmetric subquasigroup on the fixed points of $\alpha$.
\end{proof}

Here are examples of the construction in \tref{t:1nontriv12} 
for $d=6$ and $f\in\{1,4\}$:
\[
\verynarrowcols
\left[
   \begin{array}{cccccc|c}
     4&2&5&\infty&3&1&6\\
     2&5&3&6&\infty&4&1\\
     5&3&6&4&1&\infty&2\\
     \infty&6&4&1&5&2&3\\
     3&\infty&1&5&2&6&4\\
     1&4&\infty&2&6&3&5\\
     \hline     \tspacer
     6&1&2&3&4&5&\infty\\ 
\end{array}
\right] 
\qquad  \qquad  
\left[
\begin{array}{cccccc|cccc}
  \infty_3&2&\infty_2&\infty_1&\infty_4&1&6&5&4&3\\
  2&\infty_3&3&\infty_4&\infty_1&\infty_2&1&6&5&4\\
  \infty_4&3&\infty_3&4&\infty_2&\infty_1&2&1&6&5\\
  \infty_1&\infty_2&4&\infty_3&5&\infty_4&3&2&1&6\\
  \infty_2&\infty_1&\infty_4&5&\infty_3&6&4&3&2&1\\
  1&\infty_4&\infty_1&\infty_2&6&\infty_3&5&4&3&2\\
  \hline
  \tspacer
  6&1&2&3&4&5&\infty_1&\infty_2&\infty_3&\infty_4\\
  3&4&5&6&1&2&\infty_2&\infty_3&\infty_4&\infty_1\\
  4&5&6&1&2&3&\infty_3&\infty_4&\infty_1&\infty_2\\
  5&6&1&2&3&4&\infty_4&\infty_1&\infty_2&\infty_3\\
\end{array} 
\right].
\endgroup
\]

\begin{coro}\label{k5}
Let $\sigma=(\id, \beta, \gamma;(12)) \in \grp_n$. Then $\sigma \in \ptp(n)$ if,
\be
\item[(i)] both $\beta$ and $\gamma$ have cycle structure $n^1$, where
  $n$ is odd,
\item[(ii)] both $\beta$ and $\gamma$ have cycle structure $(n-1)^1\tdot1^1$,
  where $n$ is even,
\item[(iii)] the cycle structure of $\beta$ is $(d/2)^2\tdot 1^f$ and
  the cycle structure of $\gamma$ is $d^1\tdot1^f$ where $d\equiv
  0\mod4$ and $f \le d/2$, or
\item[(iv)] the cycle structure of $\beta$ is $(d/2)^2\tdot1^f$ and
  the cycle structure of $\gamma$ is $d^1\tdot1^f$, where $d \equiv
  2\mod4$ and $1 \le f \le d/2 +1$.
\end{enumerate}
\end{coro}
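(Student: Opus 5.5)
The plan is to deduce each part from \tref{t:fromBBW} or \tref{t:1nontriv12} by way of \tref{t:conj} and \lref{l:conjauto}. By \tref{t:conj}, a paratopism $(\alpha,\alpha,\alpha;(12))$ is conjugate to $(\id,\beta,\gamma;(12))$ if and only if $\alpha\alpha\sim\beta$ and $\alpha\sim\gamma$. Here the transposition $(12)$ is matched with $(12)$ and the fixed point $3$ with $3$, so the relevant products are $\alpha_1\alpha_2=\alpha^2$ and $\alpha_3=\alpha$. \lref{l:conjauto} then transfers membership in $\ptp(n)$. So it suffices, in each case, to choose $\alpha$ with $\alpha\sim\gamma$ and to compute the cycle structure of $\alpha^2$.

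The only computation needed is the cycle structure of the square of a cycle. If $C$ is a $d$-cycle, then $C^2$ is a single $d$-cycle when $d$ is odd, and splits into two $(d/2)$-cycles when $d$ is even. Fixed points stay fixed.

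For (i), take $\alpha$ to be an $n$-cycle with $n$ odd. Then $\alpha^2$ is again an $n$-cycle, and $(\alpha,\alpha,\alpha;(12))\in\ptp(n)$ by \tref{t:fromBBW}(i) with $f=0$. For (ii), take $\alpha$ with cycle structure $(n-1)^1\tdot1^1$ and $n$ even. Then $n-1$ is odd, so $\alpha^2$ has the same cycle structure, and \tref{t:fromBBW}(ii) with $f=1$ applies. For (iii) and (iv), take $\alpha$ with cycle structure $d^1\tdot1^f$ and $d$ even. Then $\alpha^2$ has cycle structure $(d/2)^2\tdot1^f$ and $\alpha\sim\gamma$. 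The hypotheses on $d$ and $f$ are exactly conditions (ii) and (iii) of \tref{t:1nontriv12}, so $(\alpha,\alpha,\alpha;(12))\in\ptp(n)$.

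There is no real obstacle. The only step needing care is applying \tref{t:conj} with the correct pairing of the cycles of $\delta=(12)$, so that the product $\alpha_1\alpha_2$ is compared with $\id\cdot\beta=\beta$ and $\alpha_3$ is compared with $\gamma$.
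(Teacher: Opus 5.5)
Your proposal is correct and is essentially the paper's proof, which combines \tref{t:conj} with \tref{t:1nontriv12} using $\beta\sim\alpha^2$ and $\gamma\sim\alpha$. The only difference is that for (i) and (ii) you cite \tref{t:fromBBW} directly, which is what the proof of \tref{t:1nontriv12}(i) does anyway. This also covers the degenerate cases $n=1$ in (i) and $n=2$ in (ii), which the hypothesis $d>1$ in \tref{t:1nontriv12} excludes.
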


\begin{proof}
Combine \tref{t:conj} and \tref{t:1nontriv12} when
$\beta\sim\alpha^2$ and $\gamma\sim\alpha$.
\end{proof}

\begin{coro}\label{cy:J10}
 Suppose $\alpha \in \sym_n$ has cycle structure $d^r$. Then 
 $\sigma=(\alpha, \alpha, \alpha;(12)) \in \ptp(n)$ if and only if 
 $d \nequiv 2\mod4$.
\end{coro}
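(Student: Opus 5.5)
The plan is to reduce to the $(\id,\beta,\gamma;(12))$ form via \tref{t:conj}, and then treat necessity and sufficiency separately. The cycle of $(12)$ of length $2$ contributes the product $\alpha\alpha=\alpha^2$, and the fixed point $3$ contributes $\alpha$. So $\sigma$ is conjugate to $(\id,\alpha^2,\alpha;(12))$, and by \lref{l:conjauto} it suffices to decide when this latter paratopism lies in $\ptp(n)$. Here $\gamma=\alpha$ has cycle structure $d^r$. The permutation $\beta=\alpha^2$ has cycle structure $d^r$ if $d$ is odd, and $(d/2)^{2r}$ if $d$ is even.

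\emph{Necessity.} Suppose $d\equiv2\mod4$. Then $d/2$ is odd, so $\beta=\alpha^2$ has a cycle of odd length $d/2$. By \tref{t:cycdiv12}, $\gamma=\alpha$ must then have a cycle whose length divides $d/2$. However, every cycle of $\alpha$ has length $d$, and $d\nmid d/2$. This contradiction shows that $\sigma\notin\ptp(n)$ when $d\equiv2\mod4$.

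\emph{Sufficiency.} We use a direct product construction. Assume $d\nequiv2\mod4$, so that either $d$ is odd or $d\equiv0\mod4$. Let $\alpha_1\in\sym_d$ be a single $d$-cycle.
\begin{enumerate}[topsep=2pt,partopsep=0pt,itemsep=2pt,parsep=0pt]
\item[(a)] If $d$ is odd, \tref{t:1nontriv12}(i) with $f=0$ gives a Latin square $L_1$ of order $d$ with $(\alpha_1,\alpha_1,\alpha_1;(12))\in\ptp(L_1)$. (Alternatively, \tref{t:fromBBW}(i) gives the same.)
\item[(b)] If $d\equiv0\mod4$, \tref{t:1nontriv12}(ii) with $f=0\le d/2$ gives such an $L_1$.
\end{enumerate}
The case $d=1$ is covered by \lref{l:totsym}.

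Next let $L_2=\cyc_r$. This square is symmetric, so $(\id,\id,\id;(12))\in\ptp(L_2)$. Form the direct product $L=L_1\times L_2$, whose triples are $((x,u),(y,v),(z,w))$ with $(x,y,z)\in O(L_1)$ and $(u,v,w)\in O(L_2)$. Applying $\sigma_1=(\alpha_1,\alpha_1,\alpha_1;(12))$ to the first components and $(\id,\id,\id;(12))$ to the second components shows that $(\alpha_1\times\id,\alpha_1\times\id,\alpha_1\times\id;(12))\in\ptp(L)$. This works because both components swap their first two coordinates simultaneously. The permutation $\alpha_1\times\id$ of $[d]\times[r]$ has cycle structure $d^r$, so it is conjugate in $\sym_n$ to $\alpha$. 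By \tref{t:conj} and \lref{l:conjauto}, we conclude that $\sigma\in\ptp(n)$.

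There is no serious obstacle, since the heavy lifting has already been done in \tref{t:1nontriv12}. The only point needing care is the check that the direct product of two autoparatopisms sharing the same $\delta=(12)$ is again an autoparatopism, with cycle structure multiplied as claimed. This is routine: each component permutation acts coordinatewise, and swapping the first two coordinates commutes with taking the product.
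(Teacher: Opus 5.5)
Your proof is correct and essentially matches the paper's. Sufficiency is exactly the paper's argument (\tref{t:1nontriv12} with $f=0$, then a direct product with $\cyc_r$), and you are slightly more careful than the paper in handling $d=1$ separately via \lref{l:totsym}, since \tref{t:1nontriv12} assumes $d>1$. For necessity, the paper applies the odd power $\sigma^{d/2}$ directly to the triple in cell $(1\alpha^{d/2},1)$ to force $k\alpha^{d/2}=k$, whereas you conjugate to $(\id,\alpha^2,\alpha;(12))$ and cite \tref{t:cycdiv12}; this is the same short-orbit phenomenon, routed through an existing result.
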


\begin{proof}
We first consider the case when $d \equiv 2\mod4$.
Let $h=d/2$ and $k=L(1\alpha^h,1)$ where $L$ is a hypothetical
Latin square for which $\sigma\in\ptp(L)$. 
Then $(1\alpha^h,1,k){\sigma^h}=(1\alpha^h,1,k\alpha^h)$ since $h$ is odd, 
so $k\alpha^h=k$. But this contradicts $o_\alpha(k)=d$,
so $\sigma\notin\ptp(n)$.

Now suppose that $d\not\equiv2\mod4$. By \tref{t:1nontriv12} there
is a Latin square $L$ with $(\alpha',\alpha',\alpha';(12))\in\ptp(L)$
where $\alpha'$ has cycle structure $d^1$. The direct product
of $L$ and $\cyc_r$ has the required autoparatopism $\sigma$.
\end{proof}

\begin{coro}\label{cj:1cyc12} 
Let $\sigma=(\id, \beta, \beta;(12)) \in \grp_n$. Suppose $\beta$ has
cycle structure $d\tdot1^f$ where $d>1$. 
Then $\sigma \in \ptp(n)$ if and only if
one of the following conditions is satisfied: 
\be
\item[(i)] $d$ is odd and $f \in \{0, 1\}$, or

\item[(ii)] $d$ is even and $f=0$.
\end{enumerate}
\end{coro}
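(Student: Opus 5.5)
The plan is to reduce to \tref{t:1nontriv12} via \tref{t:conj}, and to obtain the only new necessary condition from the fixed-point bound \tref{t:fixgameven12}.

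\textbf{Reduction.} Let $\beta$ have cycle structure $d\tdot1^f$. Then $(\id,\beta,\beta;(12))$ is conjugate to $(\alpha,\alpha,\alpha;(12))$ exactly when $\alpha^2\sim\beta$ and $\alpha\sim\beta$.
\begin{itemize}
\item If $d$ is odd, take $\alpha=\beta^{(d+1)/2}$. This is a power of $\beta$ coprime to $d$, so $\alpha\sim\beta$, and $\alpha^2=\beta^{d+1}=\beta$. By \tref{t:conj} the two paratopisms are conjugate.
\item Hence, by \lref{l:conjauto} and \tref{t:1nontriv12}(i), for odd $d$ we get $\sigma\in\ptp(n)$ if and only if $f\in\{0,1\}$.
\end{itemize}

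\textbf{Even $d$, sufficiency.} When $d$ is even and $f=0$, $\beta$ is an $n$-cycle. Then \tref{t:bet1cyc12} gives $\sigma\in\ptp(n)$: its condition (i) holds since $n\mid 2n$, and condition (ii) holds since $\gamma=\beta$ has no odd cycle.

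\textbf{Even $d$, necessity ($f=0$).} Suppose $f\ge1$. Apply \tref{t:fixgameven12} with the even cycle length $d$. There $r=1$, so $|\fix(\gamma)|\le (r-1)d=0$. But $\gamma=\beta$ has $f\ge1$ fixed points, a contradiction.

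As an alternative check, apply \tref{t:fixgameven12} with cycle length $1$. This gives only $f\le (f-1)+1$, which is trivially true, so the even cycle is the one that forces $f=0$.

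Everything here is a direct citation of earlier results. The only point needing care is the conjugacy verification in the odd case, namely that $\beta^{(d+1)/2}$ has the same cycle structure as $\beta$. This holds because $\gcd((d+1)/2,d)=1$, and fixed points stay fixed.
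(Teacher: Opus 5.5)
Your proof is correct and has the same structure as the paper's: for odd $d$, conjugacy to $(\alpha,\alpha,\alpha;(12))$ followed by \tref{t:1nontriv12}; for even $d$, a fixed-point bound for necessity and an explicit existence result for sufficiency. The differences are only in what is cited: the paper takes $\alpha=\beta$ (using $\beta^2\sim\beta$); it excludes $f\ge1$ via \tref{t:fixgamodd12}, where your use of \tref{t:fixgameven12} with $r=1$ is if anything more direct; and for existence it simply observes that $\cyc_n$ has $\sigma$ as an autoparatopism rather than invoking \tref{t:bet1cyc12}.
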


\begin{proof}
$(i)$ Suppose $d$ is odd. Then $(\beta, \beta, \beta;(12))$ and $(\id, \beta, \beta;(12))$ are conjugate, by \tref{t:conj}. 
By \tref{t:1nontriv12}, $\sigma \in \ptp(n)$ if and only if $f \in \{0,1\}$.

$(ii)$ Suppose $d$ is even. If $\sigma \in \ptp(n)$ then $f=0$ by
\tref{t:fixgamodd12}. Conversely, if $f=0$ then $\cyc_n$ has $\sigma$ as
an autoparatopism.
\end{proof}

Extending in the direction of \cref{cj:1cyc12}, we now characterise
$(\id, \beta, \beta;(12)) \in \ptp(n)$ when $\beta$ has only
two non-trivial cycles. We first do the case when those cycles 
are equal.

\begin{theo}\label{t:dsq12} 
Let $\sigma=(\id, \beta, \beta;(12)) \in \grp_n$. Suppose $\beta$ has
cycle structure $d^2\tdot1^f$ for some $d > 1$. Then
$\sigma\in\ptp(n)$ if and only if one of the following is satisfied.
\be
\item[(i)] $d$ is even and $f=0$, or
\item[(ii)] $d$ is odd and $f \le d+1$.
\end{enumerate}
\end{theo}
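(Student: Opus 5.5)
Necessity splits into two parts, both resting on results already proved. For $d$ even we aim to show $f=0$, and for $d$ odd we aim to show $f\le d+1$.

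For $d$ even, suppose $f\ge1$ and apply \tref{t:fixgamodd12} with $r=1$, taking the cycle length to be $1$. That theorem needs $d>1$, so instead we take the cycle length in it to be $d$ itself. Then $r=2$ is even, so it does not apply directly. A cleaner route is to look at the fixed points. By \cref{cy:fixsbsq12}, $\fix(\beta)$ carries a subsquare $M$. By \tref{t:strlcm12} with $\Lambda=\{1\}$, its symbols are those of $\gamma=\beta$ with $o_\beta\in\{1,2\}$. Since $d$ is even and $d>1$, $d=2$ would put the $2$-cycles in $S_{\Lambda'}$ while the rows number only $f$, and $|R_\Lambda|=|S_{\Lambda'}|$ gives the contradiction $f=f+4$. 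So for $d=2$ we get $f=0$. For $d\ge4$ the symbols of $M$ are exactly $\fix(\beta)$. So the complementary $2d\times 2d$ block on the $d$-cycles is a subsquare $N$ on the $2d$ symbols of the two $d$-cycles, and $\sigma$ restricted to $N$ is an autoparatopism with $\beta|_N$ of structure $d^2$. Take any row $i$ of $M$ and count the symbols $k$ from the $d$-cycles in row $i$. These sit in the columns of the $d$-cycles, which lie in the block $\fix\times d$-cycles. For a fixed row $i$, the $2d$ cells in the columns of the $d$-cycles form the rows-fixed/columns-cycle half of orbits from \lref{l:orb12}(iii), each of length $2d$. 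The triple $(i,j,k)$ maps under $\sigma^{d}$ to $(j\beta^{d},\dots)$. Rather than chase this, I would use the parity argument of \tref{t:evencyc12}: symbol $k$ with $o_\beta(k)=d$ appears $n$ times, and its orbits all have length $2d$. Hence $2d\mid dn$, i.e.\ $n=2d+f$ is even, so $f$ is even. The real obstruction should then come from the diagonal block. Inside $N$, orbits in $M_{tt}$ have length $2d$ with $d/2$ orbits, and the symbol fixing in row $i\in\fix$ forces some symbol of a $d$-cycle, say $k=L(i,j)$ with $j$ in cycle $1$, to satisfy $(i,j,k)\sigma=(j\beta,i,k\beta)$. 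Then column $i$ of the cycle rows contains $k\beta$. Following the orbit, the $d$ entries in column $i$ from rows of cycle $1$ receive the symbols $k\beta^{odd}$ only, while the entries in row $i$ receive $k\beta^{even}$. I expect this to give a Latin conflict via \tref{t:fixgamodd12}-style counting on the block $A$ of $d$-cycle rows and columns. If $f>0$, the symbols of the $d$-cycles fill $2d\cdot2d-$ (cells used in fixed columns) cells of $A$, namely $2d\cdot 2d-2df$ cells, while the fixed symbols fill $2df$ cells of $A$. Orbits of length $2d$ in $A$ of fixed symbols must lie in the off-diagonal blocks $M_{12}\cup M_{21}$, by \lref{l:orb12} and \lref{l:lcm12} (the diagonal gives $o(k)\mid 2d$, with even $d$ and no short orbit, which is still allowed). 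So I would refine this: use \tref{t:fixgameven12}(i), which gives $f\le d$, then exclude $1\le f\le d$ by a parity/orbit count on $M_{11}$. This step, excluding $f\ge1$ for even $d$, is the main obstacle. I would try first to show that row $i$ of $M_{11}$ must contain a cell $(i,i\beta^{d/2})$ whose symbol is forced to satisfy $k\beta^{d/2}\ldots$, mimicking the $d\equiv2\pmod 4$ argument in \cref{cy:J10}.

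For $d$ odd, \tref{t:fixgameven12}(ii) with $r=2$ gives $f\le d+1$ immediately.

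For sufficiency with $d$ even and $f=0$, I would take $\cyc_d\times\cyc_2$ via \cref{cy:dirpro} (with $\gamma=\beta$ of structure $d^2$, where $d\mid 2d$ and no odd cycle), or simply the direct product of $\cyc_d$ with $\cyc_2$. For $d$ odd and $f\le d+1$, I would write explicit contours in the style of \tref{t:1nontriv12}. There are two short orbits, one in each of $M_{11}$ and $M_{22}$, and the $d$ orbits of $M_{12}\cup M_{21}$. Fixed symbols $\infty_1,\dots,\infty_{f}$ are placed on up to $d-1$ orbits in $M_{12}\cup M_{21}$ and up to the two short orbits, together with orbits in $M_{11}, M_{22}$, filled so that each $\infty_\ell$ also appears in the fixed rows and columns. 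The fixed block uses a symmetric subquasigroup of order $f$, which exists by \lref{l:totsym}. The cases $f\le 1$ follow from \cref{cj:1cyc12}-type constructions, and then a direct product argument applies. I expect the general odd case to need separate contours according to the parity of $f$.
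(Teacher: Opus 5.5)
There are two genuine gaps. The rest matches the paper: your necessity argument for odd $d$ (\tref{t:fixgameven12}(ii) with $r=2$) and your construction for even $d$, $f=0$ (\cref{cy:dirpro}) are the paper's, and your $d=2$ argument via \tref{t:strlcm12} is valid.

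\textbf{First gap: even $d\ge4$ with $f\ge1$.} You leave this case open and flag it as the main obstacle. The subsquare, parity and orbit-counting detours you sketch do not close it. The missing step is one line, and you stopped just short of it when you declined to chase $\sigma^{d}$. Since $\sigma^2=(\beta,\beta,\beta^2;\id)$, take a triple $(i,j,k)$ with $i\in\fix(\beta)$ and $o_\beta(j)=d$. Every cycle length of $\beta$ divides $d$, so
\[
(i,j,k)\sigma^{d}=(i,j\beta^{d/2},k\beta^{d})=(i,j\beta^{d/2},k).
\]
Because $j\beta^{d/2}\ne j$, the symbol $k$ appears twice in row $i$, a contradiction. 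Equivalently, the paper applies \lref{l:lcm12} to a cell with $o_\beta(i)=d$ and $o_\beta(j)=1$. This forces $\lcm(2,c)=\lcm(2d,c)=2d$, hence $c=2d$ since $d$ is even, and $\gamma=\beta$ has no $2d$-cycle. This handles every even $d$, including $d=2$.

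\textbf{Second gap: sufficiency for odd $d$ and $1\le f\le d+1$.} You only announce this part, yet it is the bulk of the paper's proof: explicit contours split by the parity of $f$, plus a modification for $f=d+1$. A direct-product argument is not available in general, for example when $2d+f$ is prime. \cref{cj:1cyc12} concerns a single nontrivial cycle, so it does not supply the $d^2$ cases either.

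Your sketch also undercounts the room available for fixed symbols. By \lref{l:lcm12}, a fixed symbol in a row of a $d$-cycle must lie in a column of a $d$-cycle. It cannot occupy a length-$2d$ orbit inside $M_{11}$ or $M_{22}$, because such an orbit meets each of its rows twice. So each $\infty_\ell$ fills either one of the $d$ orbits of $M_{12}\cup M_{21}$, or both short orbits at once. Reaching $f=d+1$ therefore needs all $d$ off-diagonal orbits, together with a single $\infty$ occupying both short orbits. This is exactly the paper's $C(1,h)=C(d+1,d+h)=\infty_{d+1}$. Your ``up to $d-1$'' off-diagonal orbits would cap you at $f=d$.
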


\begin{proof}
Suppose first that $\sigma \in \ptp(L)$ and $d$ is even. 
If $f\ge1$ then there is $(i,j,k)\in O(L)$ with $o_\beta(i)=d$
and $o_\beta(j)=1$. By \lref{l:lcm12}, $o_\beta(k)=2d$, but $\beta$
has no cycles of that length, so $f=0$. Conversely, if $f=0$ then
$\sigma \in \ptp(L)$ by \cref{cy:dirpro}.

Now suppose that $d$ is odd.  If $\sigma \in \ptp(L)$ 
then $f \le d+1$ by \tref{t:fixgameven12}.
Let $g=\lfloor f/2\rfloor$ and $h=(d+1)/2$. We construct a contour
for the subcase $f\le d$ first.
For $1\le i\le h$, take $C(i,h+1-i)=t_1$. For $1\le i\le g$ take
\begin{align*}
C(i+1,2d+i)=C(d+1-i,2d+g+i)=C(d+1+i,d+h-i)&=t_2\\
C(d+1+i,2d+i)=C(2d+1-i,2d+g+i)&=t_1\\
C(i+1,d+h-i)&=\infty_i\\
C(d+1-i,d+h+i)&=\infty_{i+g}.
\end{align*}
For $g+2\le i\le h$ take $C(i,d+h+1-i)=C(d+2-i,d+h-1+i)=t_2$ and
$C(d+i,d+h+1-i)=t_1$. If $f$ is odd then take $C(1,d+h)=\infty_f$,
$C(d+1,d+h)=C(1,n)=t_2$ and $C(d+1,n)=t_1$ whereas if $f$ is even,
take $C(1,d+h)=t_2$ and $C(d+1,d+h)=t_1$.

Finally, if $f=d+1$ then construct the contour for $f=d$ as above,
then vary it by taking $C(1,h)=C(d+1,d+h)=\infty_{d+1}$,
$C(1,n)=t_1$ and $C(d+1,n)=t_2$.

For $1\le f\le d+1$, we add any symmetric subquasigroup on
the fixed points of $\beta$.
\end{proof}

An example of the construction in \tref{t:dsq12} with $d = 5$ and $f = 1$:
\[
\verynarrowcols
\left[
\begin{array}{ccccc|ccccc|c} 
\tspacer
4& 5& 1& 2& 3& 9& 10& \infty& 7& 8& 6\\ 
5& 1& 2& 3& 4& 10& 6& 7& \infty& 9& 8\\ 
1& 2& 3& 4& 5& 6& 7& 8& 9& \infty& 10\\ 
2& 3& 4& 5& 1& \infty& 8& 9& 10& 6& 7\\ 
3& 4& 5& 1& 2& 8& \infty& 10& 6& 7& 9\\ 
\hline\tspacer 
9& 10& \infty& 7& 8& 4& 5& 6& 2& 3& 1\\ 
10& 6& 7& \infty& 9& 5& 1& 2& 8& 4& 3\\ 
6& 7& 8& 9& \infty& 1& 2& 3& 4& 10& 5\\ 
\infty& 8& 9& 10& 6& 7& 3& 4& 5& 1& 2\\ 
8& \infty& 10& 6& 7& 3& 9& 5& 1& 2& 4\\
\hline\tspacer
7& 9& 6& 8& 10& 2& 4& 1& 3& 5& \infty\\
\end{array}
\right].
\endgroup
\]

Next we look at $\beta$ with two non-trivial cycles of different lengths.

\begin{theo}\label{t:2diffcyc12} 
  Let $\sigma=(\id, \beta, \beta;(12)) \in \grp_n$. Let the cycle
  structure of $\beta$ be $d_1\tdot d_2 \tdot 1^{f}$, where
  $d_1>d_2>1$. Then $\sigma \in \ptp(n)$ if and only if $d_1/d_2$ is
  an odd integer and $f=0$.
\end{theo}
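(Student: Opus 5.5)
The statement has a necessity part and a sufficiency part. For necessity I will use \lref{l:lcm12} on the block $M_{12}$ to force $d_2\mid d_1$ with odd quotient. I will then use \tref{t:fixgamodd12} to force $f=0$. Sufficiency will come from an explicit contour. Throughout, $\beta=\beta_1\beta_2\cdots$ is canonical and $L$ is a hypothetical Latin square with $\sigma\in\ptp(L)$. Here $\gamma=\beta$, so the possible symbol orders are $d_1$, $d_2$ and $1$.

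\emph{Divisibility condition.} Take any triple $(i,j,k)$ with $i$ in the $d_1$-cycle and $j$ in the $d_2$-cycle; such triples exist because $M_{12}\neq\emptyset$. Let $c=o_\beta(k)$. \lref{l:lcm12} gives $2\lcm(d_1,d_2)=\lcm(2d_1,c)=\lcm(2d_2,c)$, and I will check the three values of $c$.
\begin{itemize}
\item If $c=1$, this reads $2d_1=2d_2$, which is impossible.
\item If $c=d_2$, then $\lcm(2d_2,d_2)=2d_2<2\lcm(d_1,d_2)$, also impossible.
\item So $c=d_1$. Then $\lcm(2d_1,d_1)=2d_1=2\lcm(d_1,d_2)$ forces $d_2\mid d_1$.
\item Also $\lcm(2d_2,d_1)=2d_1$ forces $v_2(d_1)\le v_2(d_2)$, so $d_1/d_2$ is an odd integer.
\end{itemize}
As a by-product, every symbol in $M_{12}\cup M_{21}$ lies in the $d_1$-cycle.

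\emph{The condition $f=0$.} Apply \tref{t:fixgamodd12} with $d=d_1$ and $r=1$, which is odd. Every fixed point $C$ of $\gamma$ lies in $\Gamma$.
\begin{itemize}
\item Condition (i) holds since $1$ is an odd divisor of $d_1$.
\item Condition (ii) holds since no cycle $C'$ of $\beta$ with $1<o(C')<d_1$ has $\lcm(1,o(C'))=d_1$.
\item Condition (iii) holds since $1<d_1$.
\end{itemize}
If $d_1$ is even, then $\Gamma=\emptyset$, so $f=0$. If $d_1$ is odd, then $d_2$ is odd as well, and the $d_2$-cycle of $\gamma$ also belongs to $\Gamma$. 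Indeed, $d_2$ is an odd divisor of $d_1$ and $d_2<d_1$. For (ii), the only cycle $C'$ of $\beta$ with $1<o(C')<d_1$ is the $d_2$-cycle, and $\lcm(d_2,d_2)=d_2\ne d_1$. Hence $f+1\le|\Gamma|\le 1$, so again $f=0$.

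\emph{Sufficiency.} Now let $d_1=md_2$ with $m$ odd, $f=0$, and $n=(m+1)d_2$. The structure forced above dictates the shape of $L$.
\begin{itemize}
\item $M_{22}$ must be a subsquare on the $d_2$-cycle symbols. This follows from \lref{l:lcm12}: $\lcm(2d_2,c)=2d_2$ excludes $c=d_1\ge3d_2$. Equivalently, it is \tref{t:strlcm12} with $\Lambda$ the divisors of $d_2$.
\item $M_{12}$ and $M_{21}$ are filled with $d_1$-cycle symbols.
\item $M_{11}$ contains each $d_2$-symbol $d_1$ times, together with the remaining $d_1$-symbols.
\end{itemize}
For $M_{22}$ I will take the square from \tref{t:bet1cyc12} with $\beta$ and $\gamma$ both a single $d_2$-cycle. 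For $M_{12}\cup M_{21}$ I will use the $d_2$ cell orbits from \lref{l:orb12}(iii), each of length $2d_1$. I will put $C(i,d_1+i)=t_1$ for $1\le i\le d_2$, chosen so that each row of $M_{12}$ receives $d_2$ distinct $d_1$-symbols lying in distinct residue classes modulo $m$.

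\emph{Main obstacle.} The hard step is filling $M_{11}$ as a contour. It has $(d_1-1)/2$ long orbits plus one short orbit if $d_1$ is odd, or $d_1/2$ long orbits if $d_1$ is even. Each row must contain the $d_1-d_2$ $d_1$-symbols missing from $M_{12}$, together with all $d_2$ of the $d_2$-symbols. My plan is to start from the contour $C(i,h-i)=t_1$ of \tref{t:bet1cyc12} for a single $d_1$-cycle, which realises a cyclic-type square on $M_{11}$. Then I will replace $d_2$ of its anti-diagonal orbits by orbits carrying $t_2$. These should be chosen so that the $d_1$-symbols they displace are exactly the ones reinserted in $M_{12}$ and $M_{21}$, mimicking the orbit swaps used in the proof of \tref{t:dsq12}. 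The oddness of $m$ should guarantee that the displaced symbols in each row form the right residue class modulo $m$. Verifying this compatibility is the step I expect to need the most care. I would first check it by hand for $(d_1,d_2)=(3,1)$ excluded, $(6,2)$ and $(9,3)$.
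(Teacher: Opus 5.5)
Your necessity argument is correct and follows the paper's approach. Reading $\lcm(2d_2,d_1)=2d_1$ 2-adically is a tidy substitute for the paper's use of $\sigma^{2ad_2}$, and your $f=0$ step via \tref{t:fixgamodd12} is the paper's argument written out in full.

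The gap is the ``if'' direction, which is the real content of the theorem and which you leave as a plan. The concrete parts of that plan also fail as stated.

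\emph{The contour in $M_{12}$.} Setting $C(i,d_1+i)=t_1$ for $1\le i\le d_2$ does not give a contour. Since $\sigma^2=(\beta,\beta,\beta^2;\id)$ sends cell $(i,d_1+i)$ to $(i+1,d_1+i+1)$, all of these cells lie in a \emph{single} cell orbit. You need exactly one representative in each of the $d_2$ orbits through $M_{12}\cup M_{21}$. Those orbits are distinguished by the offset, modulo $d_2$, between the positions of the row and the column within their cycles, so a diagonal hits only one class. An anti-diagonal $(i,K-i)$ hits all $d_2$ classes when $d_2$ is odd, but only $d_2/2$ of them when $d_2$ is even.

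\emph{The count in $M_{11}$.} You need $\lceil d_2/2\rceil$ orbits carrying $t_2$, not $d_2$. Every row of $M_{11}$ must contain exactly the $d_2$ symbols of the $d_2$-cycle. A long orbit meets each row twice and the short orbit meets each row once.

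\emph{The compatibility check.} You need that in every row and column the $d_1$-symbols displaced from $M_{11}$ are exactly those supplied by $M_{12}$ and $M_{21}$. This is the whole difficulty, and ``the oddness of $m$ should guarantee'' it is not a proof.

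The paper settles this with explicit contours. It first builds the subsquare on the $d_2$-cycle by \cref{cy:dirpro}, which is equivalent to your use of \tref{t:bet1cyc12}. It then treats three cases separately: $d_1,d_2$ even; $d_1\equiv1\pmod 4$; and $d_1\equiv3\pmod 4$. In $M_{11}$ it takes a run of the anti-diagonal $(i,d_1+1-i)$, placing $t_2$ on $\lceil d_2/2\rceil$ consecutive cells and $t_1$ on the rest. For the cross orbits, it puts the representatives on a shifted anti-diagonal of $M_{12}$ when $d_1$ is odd. When $d_1$ is even, it splits them as $d_2/2$ cells in $M_{12}$ and $d_2/2$ cells in $M_{21}$. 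Your outline points roughly in this direction, but until such a contour is written down and checked, sufficiency is unproved.
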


\begin{proof}
  Suppose $\sigma$ is an autoparatopism of a Latin square $L$. Let
  $(i,j,k)\in O(L)$ be such that $o_\beta(i)=d_1$ and $o_\beta(j)=d_2$.
  If $o_{\beta}(k)=c$, then by \lref{l:lcm12}, 
\[
\lcm(2d_1,2d_2)=\lcm(2d_1, c)=\lcm(2d_2, c).
\] 
This rules out $c\in\{1,d_2\}$ given that $d_1>d_2>1$.  Therefore
  $c=d_1$ and $\lcm(2d_1,2d_2)=2d_1$. 
  Hence $d_2 \mid d_1$. Now suppose that $d_1/d_2=2a$
  is even. Then
\begin{align*}
(i,j,k){\sigma^{2ad_2}} &= (i\beta^{ad_2}, j\beta^{ad_2}, k\beta^{2ad_2})
  = (i\beta^{ad_2}, j, k)
\end{align*}
But $i\beta^{ad_2} \neq i$ since $d_1 > ad_2$. This is a contradiction. 
Hence $d_1/d_2$ is odd. 

Suppose $f\ge1$ and apply \tref{t:fixgamodd12} with $d=d_1$.  The set
$\Gamma$ contains all fixed points of $\gamma$, so it cannot contain
the $d_2$-cycle. This must be because $d_2$ is even, from which we
conclude that $d_1$ is even, so $\Gamma=\emptyset$. Therefore $f=0$
after all.

Conversely suppose $d_1/d_2$ is an odd integer and $f=0$.  Taking
$\Lambda$ to be the set of divisors of $d_2$ in \tref{t:strlcm12}
shows that the $d_2$-cycle of $\beta$ induces a subsquare. This
subsquare can be built, by \cref{cy:dirpro}. For the remainder of the
contour, we consider three cases.

$(i)$ When $d_1$ and $d_2$ are even, take
\begin{align*}
C(i, d_1+1-i)  &= t_2, && \text{for } 1 \le i \le d_2/2, \\
C(i, d_1 +1-i) &= t_1, && \text{for } d_2/2+1 \le i \le d_1/2, \\
C (i, n+1-i) = 
C(d_1+i, d_1+1-i) &= t_1, &&\text{for } 1 \le i \le d_2/2. 
\end{align*}

$(ii)$ If $d_1 \equiv 1\mod4$, take
\begin{align*}
C(i, d_1+1-i) &=t_1, &&\text{for } (d_1+2d_2+5)/4 \le i \le (3d_1+1)/4, \\
C(i, d_1+1-i) &= t_2, &&\text{for } (d_1 +3)/4 \le i \le (d_1 +2d_2 +1)/4,  \\
C(i, (5d_1+2d_2+5)/4 -i) &= t_1, &&\text{for } (d_1-2d_2+5)/4 \le i \le (d_1+2d_2 +1)/4.
\end{align*}

$(iii)$ If $d_1 \equiv 3\mod4$, take
\begin{align*}
C(i, d_1+1-i) &= t_1, &&\text{for } (d_1+5)/4 \le i \le (3d_1-2d_2+1)/4, \\
C(i, d_1+1-i) &= t_2, &&\text{for } (3d_1-2d_2+5)/4 \le i \le (3d_1+3)/4,\\
C (i,(7d_1+2d_2+5)/4-i) &= t_1, &&\text{for } (3d_1-2d_2+5)/4 \le i \le (3d_1+2d_2+1)/4.
\end{align*}
\end{proof}

Our final result allows the shorter non-trivial cycle length of $\beta$
to be repeated.

\begin{theo}\label{t:k10}
  Let $\sigma=(\id, \beta, \beta;(12)) \in \grp_n$. Suppose that the cycle
  structure of $\beta$ is $d_1 \tdot {d_2}^l$, where $d_1$ is even
  and $d_1/d_2$ is an odd integer. 
  Then $\sigma\in\ptp(n)$ if and only if $0 \le l \le d_1/d_2$.
\end{theo}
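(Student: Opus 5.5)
We handle necessity with the subsquare machinery of \sref{s:basic}, and sufficiency with a contour modelled on case $(i)$ of \tref{t:2diffcyc12}. Throughout, write $m=d_1/d_2$, which is odd, and $n=d_1+ld_2$. Since $d_1$ is even and $m$ is odd, $d_2$ is even as well. We assume $m\ge3$, i.e.\ $d_1>d_2$, as in \tref{t:2diffcyc12}; if $m=1$ the cycle structure is just $d_1^{l+1}$ and the question is a different one.

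For necessity, suppose $l\ge1$ and $\sigma\in\ptp(L)$. Apply \tref{t:strlcm12} with $\Lambda$ the set of divisors of $d_2$. Then $R_\Lambda$ is the union of the $l$ cycles of length $d_2$, so it is non-empty and has size $ld_2<n$.

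Next we identify $S_{\Lambda'}$, where $\Lambda'=\Lambda\cup2\Lambda$. Here $\gamma=\beta$, so the cycle lengths of $\gamma$ are $d_1$ and $d_2$. We have $d_2\in\Lambda'$. On the other hand $d_1\notin\Lambda'$: $d_1\in\Lambda$ would need $d_1\mid d_2$, and $d_1\in2\Lambda$ would need $d_1\mid 2d_2$, i.e.\ $m\mid 2$, and both are impossible since $m\ge3$. Hence $S_{\Lambda'}$ is exactly the set of symbols in the $d_2$-cycles.

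So $L$ has a proper subsquare of order $ld_2$. A proper subsquare has order at most $n/2$, giving $ld_2\le d_1$, i.e.\ $l\le d_1/d_2$.

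For sufficiency, the case $l=0$ is \cref{cj:1cyc12}$(ii)$, since then $\beta$ is a single $n$-cycle with no fixed points. For $1\le l\le m$ we build a contour in blocks, using \lref{l:lcm12} to see which symbols are forced where.

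\emph{The $d_2$-cycles.} Rows and columns in the $d_2$-cycles carry a subsquare on the $d_2$-cycle symbols. This exists by \cref{cy:dirpro} with $\beta,\gamma\sim d_2^l$, since $d_2\mid 2d_2$ and no cycle length is odd.

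\emph{Off-diagonal blocks.} In each $M_{1t}\cup M_{t1}$ with $t\ge2$, \lref{l:lcm12} forces the symbol order $c$ to satisfy $\lcm(2d_2,c)=2d_1$, so $c=d_1$. By \lref{l:orb12}$(iii)$ there are $d_2$ cell orbits there, each of length $2d_1$, and each gets a $t_1$.

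\emph{The block $M_{11}$.} Consequently every $d_2$-cycle symbol must occur in $M_{11}$ in each of its $d_1$ rows. By \lref{l:orb12}$(ii)$, $M_{11}$ has $d_1/2$ orbits of length $2d_1$. Each $d_2$-cycle needs exactly $d_2/2$ of these orbits, using $ld_2/2\le d_1/2$ orbits in total; this is precisely where $l\le m$ is used. The remaining $(d_1-ld_2)/2$ orbits get $t_1$.

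Concretely, the contour I would try copies case $(i)$ of \tref{t:2diffcyc12}. For $1\le s\le l$ and $(s-1)d_2/2<i\le sd_2/2$, put $C(i,d_1+1-i)=t_{s+1}$; this blocks the antidiagonal of $M_{11}$ into consecutive runs of length $d_2/2$, one per $d_2$-cycle. For $ld_2/2<i\le d_1/2$, put $C(i,d_1+1-i)=t_1$. For the $s$-th $d_2$-cycle, with column offset $u_s=d_1+(s-1)d_2$, put $C(i,u_s+d_2+1-i)=C(u_s+i,d_1+1-i)=t_1$ for $1\le i\le d_2/2$. These entries are shifted copies of the $l=1$ pattern; the index shifts may need adjusting when the checking is done.

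The main obstacle is verifying that this contour generates a Latin square, and in particular that no symbol of the $d_1$-cycle repeats in a row or column of the $d_1$ rows. The $t_1$ entries coming from the $l$ different off-diagonal blocks must interleave correctly with the $t_1$ entries left in $M_{11}$.

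If the direct generalisation fails, I would first try an induction on $l$: treat the $d_2$-cycles $2,\dots,l$ by a product-type argument against the $l=1$ square of \tref{t:2diffcyc12}, or replace the $d_1$-cycle by the union with the extra $d_2$-cycles and adapt. Failing that, I would permute which antidiagonal runs of $M_{11}$ receive $t_{s+1}$ until the residues of the $t_1$ positions modulo $d_1$ are distinct.
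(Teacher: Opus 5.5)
Your necessity argument is the paper's, with the useful extra check that $d_1\notin\Lambda'$ requires $d_1/d_2\ne 1$. Excluding $d_1=d_2$ is in fact forced: by \cref{cy:dirpro}, $\beta=\gamma$ of type $d^r$ with $d$ even is realisable for every $r$. The cases $l=0$ and $l=1$ are also fine; for $l=1$ your contour is exactly case $(i)$ of \tref{t:2diffcyc12}.

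\textbf{The gap is sufficiency for $2\le l\le d_1/d_2$.} The contour you write down does not just need cosmetic index changes; it fails outright. Your entries $C(i,u_s+d_2+1-i)=t_1$ use the rows $1\le i\le d_2/2$ for every $s$. So the contour itself already has $t_1$ at $(1,u_s+d_2)$ for $s=1,\dots,l$, a repeat in row $1$. Likewise $C(u_s+1,d_1)=t_1$ for every $s$ repeats $t_1$ in column $d_1$. For $d_1=6$, $d_2=2$, $l=2$ you get $C(1,8)=C(1,10)=1$. Your fallback of permuting which antidiagonal runs receive $t_{s+1}$ cannot help, because the clash is between off-diagonal entries and does not involve $M_{11}$ at all. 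The induction and product ideas are too unspecific to count as a proof.

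\textbf{The missing idea: block $s$ must be tied to the $s$-th antidiagonal run.}
\begin{enumerate}[topsep=2pt,itemsep=2pt]
\item The orbit through $(i,d_1+1-i)$ in $M_{11}$ fills two diagonals. Any $d_1$-cycle symbol it places in cell $(x,y)$ is $x+y$, reduced mod $d_1$.
\item Hence the $d_1$-cycle symbols missing from row $x$ of $M_{11}$ are $2x+\delta$, where $\delta=y-x$ ranges over the diagonals handed to the $d_2$-cycles.
\item Along each cell orbit of $M_{1,s+1}$, the entry in row $x$ is $2x$ plus a constant. So row $x$ of $M_{1,s+1}$ contains $2x+c$ for $c$ in a fixed $d_2$-set $K_s$ that does not depend on $x$.
\item The row condition is therefore that $K_1,\dots,K_l$ are disjoint and together give exactly the handed-off $\delta$'s. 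In particular $M_{1,s+1}$ must supply precisely the values displaced by the $s$-th run.
\item Your unshifted copies make $K_1=\cdots=K_l$, since every block supplies the values of the first run.
\end{enumerate}

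The paper's contour makes exactly this shift. For $(s-1)d_2/2<i\le sd_2/2$ it puts $C(i,\,d_1+(3s-1)d_2/2+1-i)=t_1$ and $C(d_1+(s-1)d_2/2+i,\,d_1+1-i)=t_1$. Thus the $t_1$ entries for the $s$-th $d_2$-cycle lie in exactly the rows and columns of the $d_1$-block whose antidiagonal cell received $t_{s+1}$. With this shift, the rows and columns through the $d_2$-cycles still see each $d_1$-cycle symbol exactly once, and the construction goes through.
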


\begin{proof}
Suppose that $L$ is such that $\sigma\in\ptp(L)$.
By \tref{t:strlcm12}, if $l>0$ then $L$ has a subsquare $S$ induced by the
cycles of length $d_2$. The order of $S$ is at most $n/2$,
which means that $ld_2\le d_1$. 

Conversely, suppose that $0 \le l \le d_1/d_2$. The subsquare $S$ can
be constructed by \cref{cy:dirpro}.
A contour for the rest of $L$ is as follows. We take
\begin{align*}
C(i, d_1+1-i) &= t_{k+1}, &&\text{for } 1 \le k \le l, \\
C(i, d_1+1-i) &= t_1,&&\text{for } l < k \le d_1/d_2,  \\
C(i, d_1+(3k-1)d_2/2+1-i) &= t_1, &&\text{for } 1 \le k \le l, \\
C(d_1+(k-1)d_2/2+i, d_1+1-i) &= t_1,  &&\text{for } 1 \le k \le l,
\end{align*}
for $(k-1) d_2/2 +1  \le i \le kd_2/2$.
\end{proof}

In this section we have demonstrated several conditions that
$(\id,\beta,\gamma;(12))\in\ptp(n)$ necessarily satisfy. In some of
the simpler subcases we were also able to provide sufficient conditions.
We have included these as examples of the types of results which may
be obtained.  However, given the complexities involved, we are not
optimistic that $(\id,\beta,\gamma;(12))\in\ptp(n)$ can be completely
characterised for general $n$.

\section{Autoparatopisms of the form $(\alpha, \beta, \gamma; (123))$}\label{s:123}

By \tref{t:conj}, whether $\sigma=(\alpha,\beta,\gamma;(123))$ is in 
$\ptp(n)$ depends only on the cycle structure
of $\alpha\beta\gamma$. Hence it is enough to
study paratopisms of the form $(\id, \id, \gamma;(123))$,
which is what we do in this section. The approach is very similar
to the previous section. We begin by proving some necessary conditions.

\begin{theo}\label{t:allindcyc}
Let $\sigma=(\id, \id, \gamma; (123)) \in \grp_n$.
Fix an integer $d$ and let $r$ be the number of cycles of 
$\gamma$ that have length $d$. 
Suppose that $\gamma$ has no two cycles of lengths $d',d''$ where
$d\notin\{d',d''\}$ and $\lcm(d,d')=\lcm(d,d'')=\lcm(d',d'')$.
Then $\sigma \not\in \ptp(n)$ if 
\be
\item[(i)] $r=1$ and $n+d\equiv1\mod3$, or
\item[(ii)] $3\mid d$ and $3 \nmid nr$.
\end{enumerate}
\end{theo}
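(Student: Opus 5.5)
The plan is a counting argument modulo $3d$ on the triples that involve a point of a $d$-cycle. Suppose $\sigma\in\ptp(L)$ and let $X$ be the union of the $r$ cycles of $\gamma$ of length $d$, so $|X|=rd$.

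First I show that no triple of $L$ has exactly one coordinate in $X$. Take a triple $(i,j,k)$ with exactly one coordinate in $X$, whose cycle has length $d$. \lref{l:lcm123} (with $\alpha=\beta=\id$, so the three relevant permutations are all $\gamma$) gives equal pairwise lcm's of $d$ and the cycle lengths $d',d''$ of the other two coordinates. Here $d\notin\{d',d''\}$, which is exactly what the hypothesis forbids. (I read the hypothesis as also covering the case where $j$ and $k$ lie in the same cycle; this point needs a quick check.)

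For $m=1,2,3$ let $t_m$ be the number of triples of $L$ with exactly $m$ coordinates in $X$, so $t_1=0$. Each of the three coordinates takes a value in $X$ in exactly $rdn$ triples, so summing over positions gives $3rdn=2t_2+3t_3$. Next I use that $\sigma$ cyclically permutes coordinate positions and preserves $X$. Hence the triples with exactly two coordinates in $X$ are split equally among the three position-pairs. Triples whose row and column both lie in $X$ are then counted by $(rd)^2=t_3+t_2/3$. Solving these two equations gives
\[
t_2=3rdn-3r^2d^2,\qquad t_3=2r^2d^2-rdn .
\]

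The $t_3$ triples lying entirely in $X$ form a union of $\sigma$-orbits, and I apply \lref{l:orb123} to them. An orbit meeting at least two distinct cycles of $X$ has length $3\lcm(d,d,d)=3d$. An orbit inside a single $d$-cycle has length $3d$, or length $d$ for a short orbit. There is at most one short orbit per cycle, and none at all when $3\mid d$. So if $s$ is the number of short orbits, then $s\le r$, $s=0$ when $3\mid d$, and $t_3\equiv sd\pmod{3d}$. Dividing by $d$ gives the key congruence
\[
2r^2d-rn\equiv s\pmod 3 .
\]

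Case $(ii)$: $3\mid d$ forces $s=0$, and the congruence becomes $3\mid rn$, contradicting $3\nmid nr$.

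Case $(i)$: $r=1$. If $3\mid d$ we are in case $(ii)$, since $3\nmid n$ follows from $n+d\equiv1\pmod 3$. Otherwise $s\in\{0,1\}$ and $2d-n\equiv s\pmod 3$. The excluded value $2d-n\equiv2$ is equivalent to $n+d\equiv1\pmod3$, giving the contradiction.

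I expect the main obstacle to be the first step: checking carefully that the lcm hypothesis really excludes every triple with a single coordinate in $X$, including degenerate configurations where the other two coordinates share a cycle or have equal lengths. The rest is routine bookkeeping with \lref{l:orb123}.
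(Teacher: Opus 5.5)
Your proof is correct and is essentially the paper's argument. The paper gets the count $rd(2rd-n)$ of triples lying entirely in the $d$-cycles directly, by noting that each row indexed by your set $X$ must contain all $n-rd$ symbols outside $X$ in columns from $X$, rather than through your $t_2,t_3$ bookkeeping, and it then applies the same orbit-length congruence from \lref{l:orb123}. Your reading of the hypothesis, allowing $d'$ and $d''$ to be lengths of the same cycle, is exactly what the paper's proof tacitly uses, and it is necessary: for $n=3$, $d=1$ and $\gamma=(12)$, condition $(i)$ holds and the other reading makes the hypothesis vacuous, yet $(\id,\id,\gamma;(123))\in\ptp(3)$ by \Tref{T:cat123}.
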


\begin{proof}
The result is trivial if $r=0$, so assume $r\ge1$.
Suppose that $\sigma\in\ptp(L)$ for a Latin square $L$.
Define $\Omega=\{i\in[n]:o_\gamma(i)=d\}$.
Let $X$ be the submatrix of $L$ induced by the rows and columns
indexed by $\Omega$.
Suppose that $(i,j,k)\in O(L)$ where $i\in\Omega$ and 
$k\notin\Omega$. Then $j\in\Omega$ by \lref{l:lcm123} and our assumption on
cycle lengths of $\gamma$. In other words,
the $n-rd$ symbols that are not in $\Omega$ have
to occur in every row of $X$. This accounts for $rd(n-rd)$ of the $(rd)^2$
entries in $X$. The remaining entries will all be in orbits of length
$3d$ or in short orbits of length $d$. If $3\mid d$ then there are no
short orbits so we must have $3d\mid rd(2rd-n)$, which implies that
$3\mid rn$. On the other hand, if $r=1$ there is at most one short
orbit. In this case, either $3d\mid d(2d-n)$ or $3d\mid d(2d-n)-d$.
Both these conditions imply that $n+d\nequiv1\mod3$.
\end{proof}

\begin{theo}\label{t:eqcyc}
Suppose that $\sigma=(\id,\id,\gamma;(123))\in\ptp(n)$, where $\gamma$
has cycle structure $d^r$. Then 
\be
\item[(i)] if $3\mid d$ then $3\mid r$ and
\item[(ii)] if $6\mid d$ then $6\mid r$.
\end{enumerate}
\end{theo}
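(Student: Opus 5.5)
The plan is to reduce everything to a counting argument on the ``block type'' of triples, in the spirit of the proofs of \tref{t:allindcyc} and \tref{t:evencyc12}. Let $\gamma=\gamma_1\cdots\gamma_r$ be canonical, so every cycle has length $d$ and $n=rd$. For cycle indices $a,b,c\in[r]$, let $N(a,b,c)$ be the number of triples $(i,j,k)\in O(L)$ with $i\in\gamma_a$, $j\in\gamma_b$, $k\in\gamma_c$.

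By \lref{l:orb123}, if $3\mid d$ there are no short orbits. So every orbit has length $3d$, and it contains exactly $d$ triples of each of the types $(a,b,c)$, $(c,a,b)$ and $(b,c,a)$. Hence $N$ is invariant under cyclic rotation of its arguments and $N(a,b,c)=d\,x(a,b,c)$ for integers $x(a,b,c)$. Moreover, for the constant type $(a,a,a)$ the three types coincide, so $3\mid x(a,a,a)$. The Latin property gives the marginal conditions $\sum_c x(a,b,c)=d$ for every $a,b$, and similarly for the other two coordinates.

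For (i), I first note that the total count $r^2d$ and the naive orbit count give nothing modulo $3$, since $3\mid d$. So I will look for a finer invariant that separates the rotation classes. Give each point $i\in\gamma_a$ a position $p(i)\in\Z_d$ along its cycle, and consider the weight $w(i,j,k)=p(i)+p(j)+p(k)\bmod 3$, which is well defined because $3\mid d$. Each application of $\sigma$ raises $w$ by exactly $1$. So along every orbit the three residues are equidistributed, and in particular within each type $(a,b,c)$ the residue $w$ is tied to the position in the orbit.

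I then plan to count, for a single row $i$, the triples with a prescribed residue of $w$ directly from the Latin property, and compare this with the count forced by the orbit structure. Summing over the rows in a single cycle $\gamma_a$, and using the rotation symmetry of $x$ to pair off the non-constant types, should produce a congruence of the form $r\cdot(\text{something} \nequiv 0)\equiv 0\pmod 3$, hence $3\mid r$.

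For (ii), where $6\mid d$, the same scheme applies with $\Z_2$ in place of $\Z_3$, now using $\sigma^3=(\gamma,\gamma,\gamma;\id)\in\atp(L)$. For $3\mid d$, write $d=3^v m$ with $3\nmid m$; some power $\sigma^k$ with $3\nmid k$ then reduces us to $\gamma$ of cycle structure $(3^v)^{rm}$, whose three-coordinate product is still conjugate to a power of $\gamma$, by the computation $\sigma^{3m+1}=(\gamma^m,\gamma^m,\gamma^{m+1};(123))$. I intend to combine (i) with the parity argument behind \tref{t:pow2} and \cref{cy:pow2para}. That argument shows the $2$-part of $r$ cannot be trivial when the cycles of $\alpha\beta\gamma$ carry the full $2$-part of $n$. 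Applied to the power $\sigma^{k}$ with $k$ chosen so that $\gamma^k$ has cycles of $2$-power length, it should force $2\mid r$. Together with $3\mid r$ this gives $6\mid r$.

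The main obstacle is the first step: I have not yet identified the precise weighted count that yields a nontrivial congruence modulo $3$. The naive counts (totals, diagonal entries, idempotents) are all automatically divisible by $3d$. I would first try the weight $w$ restricted to the diagonal blocks $M_{aa}$, where the constant type forces $3\mid x(a,a,a)$. If that fails, I would try a signed count modelled on the sign argument used for \tref{t:pow2}.
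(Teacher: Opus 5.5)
There is a genuine gap in (i), at exactly the point you flag yourself: nothing in the proposal produces $3\mid r$. Your set-up is correct (no short orbits when $3\mid d$, cyclic invariance of the type counts, $3\mid x(a,a,a)$), but it cannot do the job. The type counts are consistent with $r=1$ (take $x(1,1,1)=d$), and the theorem excludes that case.

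The weight $w=p(i)+p(j)+p(k)\bmod 3$ gives nothing either. Since $\sigma$ raises $w$ by $1$, its sum over every $\sigma$-orbit is $\equiv0\pmod 3$. The Latin property only controls row sums, and these are $np(i)+2\sum_x p(x)\equiv0\pmod 3$ as well. The distribution of residues of $w$ within a row is not determined by the Latin property.

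More fundamentally, working modulo $3$ is too coarse. Even the correct invariant below, reduced modulo $3$, only yields $9\mid r^2d$, which is vacuous when $9\mid d$. Your reduction to cycle length $3^v$ does not change this.

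The missing idea is a quantity that is constant on orbits of $\sigma^3=(\gamma,\gamma,\gamma;\id)$, but whose total over the three $\sigma^3$-orbits making up a $\sigma$-orbit is a fixed nonzero constant modulo $d$. Take $\gamma$ canonical, so that $x\gamma\equiv x+1\pmod d$. The paper uses $\psi(i,j,k)\equiv j-i\pmod d$. On the representatives $(i,j,k)$, $(k\gamma,i,j)$, $(j\gamma,k\gamma,i)$ the values telescope to $j-j\gamma\equiv-1$.

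There are $n^2/(3d)=r^2d/3$ orbits of $\sigma$, so the $\psi$-sum over any transversal of the $\sigma^3$-orbits is $\equiv -r^2d/3\pmod d$. The triples in rows $d,2d,\dots,rd$ form such a transversal. The Latin property evaluates their $\psi$-sum as $rn(n+1)/2\pmod d$. This is $0$ unless $r$ is odd and $d$ is even, in which case it is $d/2$.

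Hence $r^2/3$ or $r^2/3+1/2$ is an integer. The latter is impossible, so $3\mid r$, and also $r$ is even or $d$ is odd. This gives (i) and (ii) at once.

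Your plan for (ii) via \cref{cy:pow2para} is sound once (i) is known, but no power of $\sigma$ is needed. If $d$ is even and $r$ is odd, the $2$-part of $n=rd$ divides $d$, so \cref{cy:pow2para} applies to $\sigma$ directly. (Your exponent $3m+1$ is coprime to $d$, so $\sigma^{3m+1}$ is conjugate to $\sigma$ and reduces nothing.)
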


\begin{proof}
  Without loss of generality we assume that $\gamma$ is the canonical
  permutation with cycle structure $d^r$. Suppose there exists a
  Latin square $L$ of order $n$ such that $\sigma \in \ptp(L)$. 
  Define $\psi:O(L)\mapsto\Z_d$ by $\psi(i,j,k)\equiv j-i\mod d$.
  We assume throughout that $3\mid d$, so that each orbit of $\sigma$
  has length $3d$, by \lref{l:orb123}.
  Note that $\psi$ is constant on 
  orbits of $\sigma^3=(\gamma,\gamma,\gamma;\id)$, by our choice of $\gamma$.
Define $T$ to be the sum, modulo $d$, of $\psi$ over one representative
from each orbit of $\sigma^3$. Observe that
\[
\psi(i,j,k)+\psi(k\gamma,i,j)+\psi(j\gamma,k\gamma,i)
=j-j\gamma\equiv-1\mod d.
\]
Hence each orbit of $\sigma$ contributes $-1$ to $T$.
There are $n^2/(3d)=r^2d/3$ orbits of $\sigma$, so $T=-r^2d/3$. 
Counting the same quantity by taking $\psi$
of each triple in each row indexed by a multiple of $d$ we find that,
$$-\frac{r^2d}{3} \equiv r\sum_{i=1}^{n}i=\frac{rn(n+1)}{2} 
=\frac{r^2d(rd+1)}{2} \equiv 
\begin{cases} 
0,&\text{if $r$ is even or $d$ is odd,}\\
d/2,&\text{if $r$ is odd and $d$ is even,}    
\end{cases}$$
modulo $d$. Therefore, either $r^2/3$ or $r^2/3+1/2$ must be
an integer, but the latter option is impossible. We conclude that $3\mid r$
and either $r$ is even or $d$ is odd. The result follows.
\end{proof}

\tref{t:eqcyc} rules out several classes of autoparatopisms
$(\id,\id,\gamma;(123))$ where $\gamma$ is semi-regular (that is, all its
cycles have the same length). There is one
more case of non-existence where $\gamma$ is semi-regular, but it
seems to be isolated and not part of a family.

\begin{theo}\label{t:5sq}
If $\gamma$ has cycle structure $5^2$ then
$\sigma=(\id,\id,\gamma;(123))\notin\ptp(10)$.
\end{theo}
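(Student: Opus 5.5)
Take $\gamma=(1\,2\,3\,4\,5)(6\,7\,8\,9\,10)$ in canonical form, and suppose for contradiction that $\sigma\in\ptp(L)$ for some Latin square $L$ of order $10$. The general tools proved so far do not rule this case out: \tref{t:allindcyc} needs $r=1$ or $3\mid d$, \tref{t:eqcyc} needs $3\mid d$, and \cref{cy:pow2para} does not apply since $d$ is odd. So the argument has to combine the orbit structure from \lref{l:orb123} with a reduction to a small number of cases, finished either by hand or by a short exhaustive search.

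\textbf{Step 1: orbit counts and block constraints.} Write $M_{11},M_{12},M_{21},M_{22}$ for the four $5\times5$ blocks. By \lref{l:orb123}, a triple $(i,j,k)$ with entries from cycles $a,b,c$ that are not all equal has an orbit of length $15$, split as $5$ cells in each of $M_{ab},M_{ca},M_{bc}$. Orbits lying entirely inside $M_{aa}$ have length $15$, except for at most one short orbit of length $5$ in each diagonal block. A short orbit in $M_{aa}$ consists of the cells $(i,i\gamma^{m-1})$ with $5\mid 3m-2$, so $m=4$; these cells are $(i,i\gamma^3)$ and contain symbol $i\gamma^{-4}=i\gamma$.

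Now count cells by type. Triples with type $(1,1,2)$ contribute cells to $M_{11}$, $M_{21}$ and $M_{12}$. Let $x_{abc}$ be the number of orbits of each type, and set $u_{a}\in\{0,1\}$ according to whether $M_{aa}$ has a short orbit. Since $M_{11}$ has $25$ cells, we need
\[
25 = 15x_{111}+5u_1+5\,(\text{mixed orbits meeting }M_{11}).
\]
Combined with the Latin conditions, namely that in rows of cycle $1$ the symbols from each cycle appear exactly $5$ times, this gives a small linear system. I expect it to force specific values, for example that each diagonal block contains exactly $5$ symbols from the other cycle, and possibly to force the short orbits to be present.

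\textbf{Step 2: structural reduction.} Fix the block pattern from Step 1. The contents of $M_{11}$ are then determined by $\gamma$-invariant data: the cell $(i,j)$ with $j=i\gamma^{s}$ has its symbol determined by a function of the diagonal offset $s$ together with $i$, compatible with $\sigma^3=(\gamma,\gamma,\gamma;\id)$. So each block is essentially a circulant-like structure described by a few parameters in $\Z_5$. The $(123)$-symmetry then ties the parameter choices in $M_{12}$, $M_{21}$ and $M_{22}$ to those in $M_{11}$ through the cyclic shift of coordinates.

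\textbf{Step 3: eliminating the cases.} I would try to show the resulting system in $\Z_5$ is inconsistent, using a sum argument in the spirit of \tref{t:eqcyc}. Apply $\psi(i,j,k)=j-i \bmod 5$ over orbit representatives, and compare this with the total obtained row by row, treating separately the contributions of mixed orbits, whose sums pick up $-1$ per orbit, and short orbits. If the invariant alone does not give a contradiction, fall back on enumerating the finitely many contours compatible with Steps 1–2, which is a quick computer check.

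\textbf{Main obstacle.} I expect Step 3 to be the hard part. Since $5$ is coprime to $3$, the clean congruence of \tref{t:eqcyc} degenerates, so the contradiction probably needs the finer Latin constraints, in the form of the circulant parameters being permutations. If no slick invariant emerges, the result would most likely be established by this exhaustive check.
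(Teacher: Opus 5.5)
There is a genuine gap: the proposal never reaches a contradiction. Step~1 stops at what you expect the count to give. Step~2 is a generic reformulation. Step~3 is either an invariant you have not found or a search you have not run.

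The invariant you suggest cannot work if used globally as in \tref{t:eqcyc}. If $\sigma$ has $\ell$ orbits of length $15$ and $s\le 2$ short orbits, then $3\ell+s=20$, the number of $\sigma^3$-orbits. This forces $s=2$ and $\ell=6$. The orbit-side total is then $-6+2\cdot 3\equiv 0\pmod 5$. The row-side total over rows $5$ and $10$ is $(55-50)+(55-100)\equiv 0$ as well, so you learn nothing. Also, the configuration that Step~1 actually forces is not the symmetric one you guess: one diagonal block has $5$ cells with symbols from the other cycle, and the other has $20$.

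The missing idea is to finish the count and then localise.
\begin{enumerate}[topsep=2pt,partopsep=0pt,itemsep=2pt,parsep=0pt]
\item Every triple in $M_{12}$ lies in a mixed orbit, and each mixed orbit meets $M_{12}$ in exactly $5$ cells. So there are exactly $5$ mixed orbits, and each contributes one $\sigma^3$-orbit to $M_{11}\cup M_{22}$.
\item Those $50$ cells form $10$ orbits of $\sigma^3$: the two short orbits, five pieces of mixed orbits, and three more. The remaining three make up a single orbit of length $15$ lying entirely inside one diagonal block, without loss of generality $M_{11}$.
\item Row $1$ of $M_{11}$ therefore contains the short-orbit cell $(1,4)$, three cells of this internal orbit, and one cell of a mixed orbit.
\end{enumerate}
The paper finishes by noting that the internal orbit must then use $(1,1)$ or $(1,2)$, and checking that no symbol is viable there.

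Your $\psi$ also finishes it in one line, provided you apply it to this single row rather than globally. The column offsets $0,1,\dots,4$ sum to $0$. The three offsets of the internal orbit sum to $-1$, by the identity in the proof of \tref{t:eqcyc}, since $\psi$ is constant on $\sigma^3$-orbits. The short orbit has offset $3$. Hence the mixed cell would need offset $\equiv 3\pmod 5$, i.e.\ it would be the cell $(1,4)$, which the short orbit already occupies.
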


\begin{proof} 
Suppose $\sigma\in\ptp(L)$ and define blocks $M_{ij}$ as in \lref{l:orb123}. 
There are at most two short orbits of $\sigma$
and there are $20\equiv2\mod3$ orbits of $\sigma^3$, so both $M_{11}$ and
$M_{22}$ must contain a short orbit. 
The orbits of $\sigma$ that hit the $M_{12}$ block account for
$5$ of the remaining $8$ orbits of $\sigma^3$ in $M_{11}\cup M_{22}$.
Hence we may suppose without loss of generality that there is 
an orbit of $\sigma$ that is contained entirely within $M_{11}$.
This orbit must hit at least one of the cells $(1,1)$ or $(1,2)$
since it hits $3$ cells in the first row, and $(1,4)$ is in the short
orbit. However, a straightforward exhaustion of the possibilities
shows that no symbol is viable in either $(1,1)$ or $(1,2)$.
\end{proof}

Just as we did in the previous section we now seemingly depart from
our agenda in terms of the form of paratopisms we consider. However,
the result we prove will have corollaries relevant to our agenda.

\begin{theo}\label{t:aaa1nontriv123}
  Suppose $\sigma=(\alpha,\alpha,\alpha;(123)) \in \grp_n$, where
  $\alpha \in \sym_n$ has cycle structure $d^1\tdot1^f$. Then
  $\sigma\in\ptp(n)$ if and only if \be
\item[(i)] $f\equiv0\mod3$ and $d\nequiv2\mod3$,
with $d$ odd in the case $f=0$,
\item[(ii)] $f\equiv1\mod3$,
with $d\nequiv5\mod 6$ in the case $f=1$, or
\item[(iii)] $f\equiv2\mod3$ and $d\nequiv1\mod3$.
\end{enumerate}
\end{theo}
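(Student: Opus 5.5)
By \tref{t:conj}, $\sigma=(\alpha,\alpha,\alpha;(123))$ is conjugate to $(\id,\id,\alpha^3;(123))$. This lets me use the tools of this section, although for the counting it is cleaner to work with $\sigma$ directly. Let $D$ be the $d$-cycle of $\alpha$ and $F=\fix(\alpha)$, with $|F|=f$.

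\textbf{Necessity.} First I would show that the $F\times F$ block is a subsquare on symbols $F$; this comes from \cref{cy:fixsbsq123} applied to the conjugate, or directly from \lref{l:lcm123}. Then every row of $D$ carries its $D$-symbols in exactly $d-f$ columns of $D$, and its $f$ fixed symbols in the remaining $f$ columns of $D$. Hence the triples with all three coordinates in $D$ number $d(d-f)$, and they form a $\sigma$-invariant set. Every other triple meets $F$ in at least one coordinate, and $(123)$ permutes the positions, so those triples are shuffled among the blocks $D\times F$, $F\times D$ and $D\times D$ and do not interfere.

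Next I would classify the orbits of $\sigma$ on the all-$D$ triples, as in \lref{l:orb123}. Generic orbits have length $3d$, and there is at most one short orbit, of length $d$, which exists only when the congruence analogous to $d\mid 3m-2$ is solvable. This should force $d(d-f)\equiv 0$ or $d \pmod{3d}$, i.e.\ $f\equiv d$ or $f\equiv d-1\pmod 3$. A routine check turns this into the three residue conditions in (i)--(iii). When $3\mid d$ I must recheck which of the two residues is actually attainable, since the short orbit may not exist.

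The extra exclusions are the main obstacle. These are $f=0$ with $d$ even, and $f=1$ with $d\equiv5\pmod 6$. For $f=0$, $d$ even, $n=d$, I would use \cref{cy:pow2para}, since every cycle of $\alpha^3$ has length divisible by the full power of $2$ in $n$ when $3\nmid d$. If $3\mid d$, I would instead use \tref{t:eqcyc}, because $\alpha^3$ then has cycle structure $(d/3)^3$.

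For $f=1$, $d\equiv5\pmod 6$, the count above is satisfied, so a finer invariant is needed. I would imitate the proof of \tref{t:eqcyc}: define $\psi(i,j,k)\equiv j-i \pmod d$ on the all-$D$ triples, adapting it to the fixed point $\infty$. I would then sum $\psi$ over orbit representatives in two ways, once orbit-by-orbit (each orbit contributing a fixed constant) and once row-by-row. The aim is a contradiction modulo $d$ that uses $d$ odd and $d\equiv 2\pmod 3$; the short orbit and the cells containing $\infty$ must be tracked carefully. I expect this parity/sum bookkeeping to be the delicate part.

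\textbf{Sufficiency.} The cases $f\le 2$, and the families with $f\equiv0,1,2 \pmod 3$ and $f\ge3$ (resp.\ $f\ge4$), largely follow from \tref{t:fromBBW} with $n=d+f$. Its conditions translate to $n\bmod 3$ constraints that match (i)--(iii), and I would verify that translation first. Any residual cases, such as $f=1$ with $n\equiv0\pmod6$, $n=10$, or $d$ even with $f\equiv0\pmod3$, need explicit constructions. I would take $F\times F$ to be a totally symmetric quasigroup from \lref{l:totsym} and give a contour-style description of orbit representatives on $D\times D$. This would place the $f$ fixed symbols in a $\sigma$-compatible way, e.g.\ along anti-diagonals $C(i,h-i)$, together with the short orbit when needed.
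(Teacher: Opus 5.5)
Two steps of your necessity argument fail as stated. First, the orbit count is valid only for $3\nmid d$. If $3\mid d$ then $\sigma^{d}=(\alpha^{d},\alpha^{d},\alpha^{d};\id)$ is the identity, so no orbit has length $3d$. Apply \lref{l:orb123} to the conjugate $(\id,\id,\alpha^3;(123))$, where $\alpha^3$ has cycle structure $(d/3)^3\tdot1^f$. The all-$D$ triples then fall into orbits of length $d$ or $d/3$, and since $d\mid d(d-f)$ the count yields nothing. That is as it should be. For $3\mid d$, conditions (i)--(iii) allow every residue of $f$, whereas your congruence ``$f\equiv d$ or $d-1\pmod 3$'' would exclude $f\equiv1$. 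For example it would exclude $d=9$, $f=1$, for which the paper exhibits a square. (When $d=3$ even your $F\times F$ subsquare claim is false, since $\sigma$ is then conjugate to $(\id,\id,\id;(123))$. This is harmless, because nothing needs excluding there.)

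For $3\nmid d$ your count is correct: it is \tref{t:allindcyc}(i) for the conjugate, and it yields the two residue exclusions. This is a genuine alternative to the paper. The paper instead notes that $\sigma^{d}$ or $\sigma^{2d}$ makes $L$ semi-symmetric and $\sigma^{2d+1}$ or $\sigma^{d+1}$ makes $\alpha$ an automorphism. It then quotes Theorem~2.3 of Bryant, Buchanan and Wanless for all three exclusions at once.

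Second, \tref{t:eqcyc} does not settle $f=0$ with $d$ even and $3\mid d$. For cycle structure $(d/3)^3$ it rules out only $18\mid d$, so $d=6,12,24,\dots$ escape. Use \cref{cy:pow2para} for every even $d$, as the paper does. It applies because $d/3$ is divisible by the full power of $2$ dividing $n=d$.

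The other two pieces are only announced. Your $\psi$ idea for $f=1$, $d\equiv5\pmod 6$ does work, but the computation is the substance, and it goes as follows. Take $D=\Z_d$, $\alpha\colon x\mapsto x+1$ and $\psi(x,y,z)=y-x$. Since $3\nmid d$, the orbit of an all-$D$ triple $(x,y,z)$ consists of all translates of $(x,y,z)$, $(z,x,y)$ and $(y,z,x)$. So it meets row $0$ in $(0,y-x,z-x)$, $(0,x-z,y-z)$ and $(0,z-y,x-y)$, whose $\psi$-values sum to $0$. The exception is $x=y=z$, when it is the short orbit and meets row $0$ only in $(0,0,0)$. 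Hence $\psi$ sums to $0$ over the all-$D$ triples of row $0$. Computed directly, if $L(0,a)=\infty$, the same sum is $\sum_{y\ne a}y\equiv -a\pmod d$, because $d$ is odd. Hence $a=0$, so there is no short orbit, and therefore $3\mid d-1$. This contradicts $d\equiv2\pmod 3$.

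For sufficiency, carrying out your translation shows that \tref{t:fromBBW} misses only $d=9$, $f=1$. Your other ``residual'' cases are either excluded by the theorem or covered by part (iv) of \tref{t:fromBBW}. The case $d=9$, $f=1$ needs an explicit square, which the paper simply displays. A generic promise of a contour does not supply it.
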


\begin{proof}
The following example has an autoparatopism
$(\alpha,\alpha,\alpha;(123))$ where $\alpha$ is the canonical
permutation with cycle structure $9^1\tdot1^1$.
\[
\verynarrowcols
\left[
\begin{array}{cccccccccc}
\tspacer
 8&10& 3& 7& 4& 5& 2& 1& 6& 9\\
 6& 9& 5& 1& 8& 2& 7& 3&10& 4\\
 3& 8& 7&10& 5& 9& 6& 4& 1& 2\\
 5& 4& 9& 2&10& 6& 1& 7& 8& 3\\
 1& 6&10& 9& 3& 8& 4& 2& 5& 7\\
 9& 7& 4& 6& 2& 1&10& 8& 3& 5\\
 4& 1& 2& 8& 7& 3& 5&10& 9& 6\\
 7& 5& 8& 4& 9&10& 3& 6& 2& 1\\
10& 2& 6& 3& 1& 7& 9& 5& 4& 8\\
 2& 3& 1& 5& 6& 4& 8& 9& 7&10\\
\end{array}
\right]
\endgroup
\]
\tref{t:fromBBW} shows $\sigma \in \ptp(n)$ in all other cases where
we are claiming existence.

Now suppose $\sigma \in \ptp(L)$. 
\cref{cy:pow2para} shows that $d$ must be odd when $f=0$.

For the remainder of the proof, assume that $3\nmid d$. 
Then, $(i,j,k){\sigma^d}=(k,i,j)$
when $d\equiv 1\mod3$ and $(i,j,k){\sigma^{2d}}=(j,k,i)$ when
$d\equiv 2\mod3$.  Hence, $L$ is semi-symmetric (that is, its
set of triples is invariant when the $3$ coordinates in each
triple are cyclically permuted). Also
\begin{align*}
(i,j,k){\sigma^{2d+1}}
&=(i\alpha, j\alpha, k\alpha) \in O(L) \text{ when } d\equiv 1\mod3,\\
(i,j,k){\sigma^{n+1}}
&=(i\alpha,j\alpha,k\alpha) \in O(L) \text{ when } d\equiv 2\mod3.
\end{align*}
Therefore, $\alpha$ is an automorphism of $L$. But, by 
\cite[Thrm~2.3]{BryantBuchananWanless2009}, $\alpha$ is not an automorphism 
of any semi-symmetric Latin square $L$ in the following cases: 
$d\equiv2\mod3$ and $f\equiv0\mod 3$, or
$d\equiv5\mod6$ and $f=1$, or
$d\equiv1\mod3$ and $f\equiv2\mod 3$.
\end{proof}

\begin{coro}\label{cy:gam1nontriv}
Let $\sigma=(\id,\id,\gamma;(123))\in\grp_n$, where $\gamma$ has cycle 
structure $d^1\tdot1^f$ and $3\nmid d$. Then $\sigma\in\ptp(n)$ if and
only if
\be
\item[(i)] $f\equiv0\mod3$ and $d\equiv1\mod3$,
with $d$ odd in the case $f=0$,
\item[(ii)] $f\equiv1\mod3$,
with $d\nequiv5\mod 6$ in the case $f=1$, or
\item[(iii)] $f\equiv d\equiv2\mod3$.
\end{enumerate}
\end{coro}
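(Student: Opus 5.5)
The plan is to reduce the corollary to \tref{t:aaa1nontriv123} using \tref{t:conj} and \lref{l:conjauto}. By \tref{t:conj}, $(\alpha,\alpha,\alpha;(123))$ is conjugate in $\grp_n$ to $(\id,\id,\alpha^3;(123))$: the single 3-cycle $(123)$ of $\delta$ carries the product $\alpha\alpha\alpha=\alpha^3$, and $(\id,\id,\alpha^3;(123))$ carries $\id\,\id\,\alpha^3=\alpha^3$. So, by \lref{l:conjauto}, it is enough to find $\alpha$ with $\alpha^3\sim\gamma$ and then read off the conditions of \tref{t:aaa1nontriv123} for that $\alpha$.

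The key step uses the hypothesis $3\nmid d$. Let $\alpha$ have cycle structure $d^1\tdot1^f$. Since $\gcd(3,d)=1$, the permutation $\alpha^3$ is again a single $d$-cycle together with $f$ fixed points, so $\alpha^3\sim\gamma$. Hence $\sigma=(\id,\id,\gamma;(123))$ is conjugate to $(\alpha,\alpha,\alpha;(123))$, and $\sigma\in\ptp(n)$ exactly when the conditions of \tref{t:aaa1nontriv123} hold for $(d,f)$. This gives both directions at once; the argument uses only the conjugacy criterion, with no new construction or nonexistence argument.

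What remains is to check that, once $3\nmid d$ is assumed, the three conditions of \tref{t:aaa1nontriv123} reduce to those stated here. In case (i), ``$f\equiv0$ and $d\not\equiv2\pmod 3$'' together with $3\nmid d$ becomes $d\equiv1\pmod 3$, and the requirement that $d$ be odd when $f=0$ carries over unchanged. Case (ii) is unaffected. In case (iii), ``$f\equiv2$ and $d\not\equiv1\pmod 3$'' together with $3\nmid d$ becomes $d\equiv2\pmod 3$, that is, $f\equiv d\equiv2\pmod 3$. The only place needing care is the cycle-structure computation for $\alpha^3$, which is where $3\nmid d$ is essential: if $3\mid d$, the $d$-cycle would split into three $(d/3)$-cycles. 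The rest is routine bookkeeping.
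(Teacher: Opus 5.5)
Your proposal is correct and follows the paper's proof. The paper also uses $3\nmid d$ to get $\gamma\sim\gamma^3$, conjugates $\sigma$ to $(\gamma,\gamma,\gamma;(123))$ via \tref{t:conj}, and reads off the conditions from \tref{t:aaa1nontriv123`}; you take an arbitrary $\alpha\sim\gamma$ in place of $\gamma$ itself, which changes nothing.
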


\begin{proof}
As $3\nmid d$ we see that $\gamma\sim\gamma^3$ so
$\sigma$ is conjugate to $(\gamma,\gamma,\gamma;(123))$ in
$\grp_n$, by \tref{t:conj}. 
The result now follows from \tref{t:aaa1nontriv123}.
\end{proof}

\begin{coro}\label{cy:singcyc}
Let $\sigma=(\id, \id, \gamma;(123)) \in \grp_n$, where $\gamma$ has cycle 
structure $n^1$. Then $\sigma\in\ptp(n)$ if and only if $n \equiv 1\mod6$.
\end{coro}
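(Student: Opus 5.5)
Write $n=d$ and $f=0$, so $\gamma$ is a single $n$-cycle. I will split according to the residue of $n$ modulo $3$. The case $3\nmid n$ reduces to \cref{cy:gam1nontriv}, and the case $3\mid n$ reduces to \tref{t:eqcyc}.

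First suppose $3\mid n$. Then $\gamma$ has cycle structure $d^r$ with $d=n$ and $r=1$. \tref{t:eqcyc}(i) says that if $3\mid d$ then $3\mid r$, which fails for $r=1$. Hence $\sigma\notin\ptp(n)$, consistent with $n\nequiv1\mod6$. As a sanity check, \tref{t:allindcyc}(ii) with $r=1$ gives the same conclusion, since there are no other cycle lengths to worry about and $3\nmid nr$ would need checking. \tref{t:eqcyc} already suffices, and it applies directly.

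Now suppose $3\nmid n$. Apply \cref{cy:gam1nontriv} with $d=n$ and $f=0$. Since $f=0\equiv0\mod3$, only clause (i) can apply; clauses (ii) and (iii) need $f\equiv1$ or $f\equiv2\mod 3$. Clause (i) gives $\sigma\in\ptp(n)$ if and only if $n\equiv1\mod3$ and $n$ is odd. Combining these two congruences yields exactly $n\equiv1\mod6$. When $n\equiv2\mod3$, clause (i) fails, as does everything else, so $\sigma\notin\ptp(n)$. This is also consistent with \tref{t:allindcyc}(i), since $n+d=2n\equiv1\mod3$.

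Putting the two cases together gives $\sigma\in\ptp(n)$ if and only if $n\equiv1\mod6$. There is no real obstacle. The only care needed is to check the hypotheses of \cref{cy:gam1nontriv}, namely $3\nmid d$, and to read clause (i) correctly in the degenerate case $f=0$, where the extra parity condition on $d$ is what eliminates $n\equiv4\mod6$.
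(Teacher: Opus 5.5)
Your proof is correct and matches the paper's argument: split on whether $3\mid n$, apply \tref{t:eqcyc} with $r=1$ when $3\mid n$, and apply \cref{cy:gam1nontriv}(i) with $d=n$, $f=0$ when $3\nmid n$. One small correction to your inessential aside: \tref{t:allindcyc} does not give the conclusion when $3\mid n$. With $r=1$ and $d=n$ we have $3\mid nr$, so part (ii) fails, and $n+d=2n\equiv0\mod3$, so part (i) fails too. Hence \tref{t:eqcyc} is genuinely needed in that case.
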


\begin{proof}
  If $3\mid n$ then \tref{t:eqcyc} shows that $\sigma\notin\ptp(n)$.
  If $3\nmid n$ then we apply \cref{cy:gam1nontriv}.
\end{proof}

\begin{coro}\label{cy:3cycs}
Let $\sigma=(\id, \id, \gamma;(123))\in\grp_n$ where $\gamma$ has 
cycle structure $d^3$. Then $\sigma\in\ptp(n)$ if and only if $d$
is odd.
\end{coro}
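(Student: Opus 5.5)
Necessity comes straight from \cref{cy:pow2para}. Suppose $d$ is even. Then $n=3d$ and the largest power of $2$ dividing $n$ is the same as the largest power of $2$ dividing $d$, say $2^a$ with $a\ge1$. Every cycle of $\gamma=\id\,\id\,\gamma$ has length $d$, which is divisible by $2^a$, so \cref{cy:pow2para} gives $\sigma\notin\ptp(n)$. (This is exactly the example noted after that corollary, with $m=3$ odd.) Note that \tref{t:eqcyc} imposes no further restriction: when $3\mid d$ with $d$ odd we have $r=3$, so $3\mid r$, and $6\nmid d$.

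For sufficiency, assume $d$ is odd. I plan a direct product construction. By \lref{l:totsym}, take a totally symmetric Latin square $T$ of order $3$, for instance $T(i,j)=-i-j \bmod 3$; its triple set is invariant under $(\id,\id,\id;(123))$. Next we need a Latin square $A$ of order $d$ with an autoparatopism $(\id,\id,\gamma';(123))$ where $\gamma'$ has cycle structure $d^1$. \cref{cy:singcyc} provides this only when $d\equiv1\bmod 6$, so the construction has to be arranged differently.

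The better choice is to build $\sigma'=(\alpha,\alpha,\alpha;(123))$ with $\alpha$ a $d$-cycle on $\Z_d$. Then $\alpha\alpha\alpha=\alpha^3$, which has cycle structure $d^1$ when $3\nmid d$ but $(d/3)^3$ when $3\mid d$. To avoid this case split, I would work directly on $\Z_d\times\Z_3$ and look for a linear Latin square $L(x,y)=ax+by$ over $\Z_d$, coupled with a semi-symmetric structure on the $\Z_3$ coordinate. Concretely, on $\Z_d$ take $L(i,j)=-i-j$. This square is totally symmetric, and $(\tau,\tau,\tau^{-2};\id)$ is an autotopism of it, where $\tau:x\mapsto x+1$. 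Composing gives a paratopism $(\tau,\tau,\tau^{-2};(123))$, and its product $\tau\cdot\tau\cdot\tau^{-2}$ is the identity, which is useless. Instead I would use translations $(\tau^{a},\tau^{b},\tau^{c})$ with $a+b+c=0$ required for an autotopism, combined with $(123)$; the product $\tau^{a+b+c}$ is again trivial. So the $\Z_d$ factor alone cannot carry the $d$-cycle. The twist must come from the $\Z_3$ factor: on $\Z_d\times\Z_3$, take the cyclic group $G=\Z_{3d}$ with $L(x,y)=-x-y$ (totally symmetric) and the paratopism $(\tau^{a},\tau^{b},\tau^{c};(123))$ where $a+b+c\equiv0$ is not required. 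Since $(i,j,k)\mapsto(k+c,i+a,j+b)$ must preserve $i+j+k\equiv0$, we need $a+b+c\equiv0 \pmod{3d}$, so the product is trivial once more.

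Because of this, the final plan is an explicit contour-style construction, which is the main obstacle. Index symbols by $\Z_d\times\{0,1,2\}$ with $\gamma(x,s)=(x+1,s)$. By \lref{l:orb123}, every orbit has length $3d$ (when $3\nmid d$ some may be short), and orbits meet the blocks $M_{st}$ in triples of blocks. I would look for $L((x,s),(y,t))=(\lambda_{st}x+\mu_{st}y+\nu_{st},\,s+t \text{ or }-s-t)$, with coefficients chosen so that the $(123)$ condition $L(j,k\gamma)=i$ holds blockwise; this turns into linear congruences mod $d$ that require inverting $2$. That is where oddness of $d$ enters, which is consistent with the necessity argument. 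Solving these congruences, or if they fail, using computer-found small cases plus a direct product with $\cyc$ of order coprime to $3$, is the step I expect to be hardest.
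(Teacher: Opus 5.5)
Your necessity argument is correct. For even $d$, the numbers $n=3d$ and $d$ have the same $2$-part, so \cref{cy:pow2para} applies directly. The paper reaches the same conclusion through \tref{t:aaa1nontriv123}, whose $f=0$ case rests on that same corollary.

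The genuine gap is sufficiency: for odd $d$ you never produce a Latin square with the required autoparatopism.
\begin{itemize}
\item The block-linear ansatz $L((x,s),(y,t))=(\lambda_{st}x+\mu_{st}y+\nu_{st},\pm(s+t))$ is never set up, let alone solved.
\item The claim that oddness of $d$ ``enters'' through inverting $2$ is speculation.
\item The fallback of computer-found small cases plus direct products is not an argument covering all odd $d$.
\end{itemize}
As written, the proof establishes only one direction.

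You were in fact one step from the paper's proof. In your ``better choice'' paragraph you took $\alpha$ to be a $d$-cycle, which gives the unwanted structure $(d/3)^3$ when $3\mid d$. Take $\alpha$ instead to be a single cycle of length $n=3d$. Then $\alpha^3$ splits into $\gcd(3d,3)=3$ cycles of length $d$ whatever $d$ is modulo $3$, so $\alpha\alpha\alpha\sim\gamma$. By \tref{t:conj}, $\sigma$ is therefore conjugate to $(\alpha,\alpha,\alpha;(123))$. When $d$ is odd, $n=3d\equiv3\pmod 6$, so the $(123)$ part of \tref{t:fromBBW} with $f=0$ puts $(\alpha,\alpha,\alpha;(123))$ in $\ptp(n)$. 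Then \lref{l:conjauto} transfers this to $\sigma$. The paper packages this as \tref{t:aaa1nontriv123}$(i)$ applied to a single $3d$-cycle with $f=0$, which gives both directions at once.
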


\begin{proof}
Let $\alpha\in\sym_n$ be a single cycle of length $3d$ so that
$\alpha^3$ has cycle structure $d^3$. Then 
$\sigma'=(\alpha,\alpha,\alpha;(123))$ 
is conjugate to $\sigma$ in $\grp_n$, by 
\tref{t:conj}. Now apply \tref{t:aaa1nontriv123}$(i)$.
\end{proof}

\begin{table}
\[
\begin{array}{ccc}
\verynarrowcols\begin{array}{|r|r|}
\hline
\tspacer n=2 & \\
\beta & \gamma\bspacer\\
\hline\tspacer
1^2 & 1^2 \\
2 & 2 \\
\hline
\hline
\tspacer n=3 & \\
\beta & \gamma\bspacer\\
\hline\tspacer
1^3 & 1^3, 2\tdot 1 \\
3 & 2 \tdot 1,3 \\

\hline
\hline
\tspacer n=4 & \\
\beta & \gamma\bspacer\\
\hline\tspacer
1^4 & 1^4,2\tdot 1^2 \\
2^2 & 2\tdot 1^2,2^2,4 \\
3\tdot 1 & 3\tdot 1 \\
4 & 2^2,4 \\
\hline
\hline
\tspacer n=5 & \\
\beta & \gamma\bspacer\\
\hline\tspacer
1^5 & 1^5,2\tdot1^3,2^2\tdot1 \\
2^2\tdot 1 & 4 \tdot 1 \\
5 & 2^2\tdot1, 5 \\
\hline
\hline
\tspacer n=6 & \\
\beta & \gamma\bspacer\\
\hline\tspacer
1^6 & 1^6,2\tdot1^4,2^2\tdot1^2 \\
2^2\tdot1^2 & 4\tdot1^2 \\
2^3 & 2^3, 4\tdot 2 \\
3\tdot 1^3 & 3\tdot 2\tdot 1 \\
3^2 & 2 \tdot 1^4, 2^2 \tdot 1^2, 3 \tdot 1^3, 3 \tdot 2 \tdot 1, 3^2 \\
5 \tdot 1 & 5 \tdot 1 \\
6 & 2^3, 4\tdot 2, 6 \\
\hline
\hline
\tspacer n=7 & \\
\beta & \gamma\bspacer\\
\hline\tspacer
1^7 & 1^7,2\tdot1^5,2^2\tdot1^3,2^3\tdot1 \\
2^2 \tdot 1^3 & 4 \tdot 2 \tdot 1 \\
3^2 \tdot 1 & 3^2\tdot 1,6 \tdot 1 \\
7 & 2^3 \tdot 1, 7\\
\hline
\hline
\tspacer n=8 & \\
\beta & \gamma\bspacer\\
\hline\tspacer
1^8 & 1^8,2\tdot1^6,2^2\tdot1^4,2^3\tdot1^2 \\
2^2\tdot1^4 & 4\tdot2\tdot1^2 \\
2^4 & 2\tdot1^6, 2^2\tdot1^4, 2^3\tdot1^2, 2^4, \\
& 4\tdot1^4, 4\tdot2\tdot1^2, 4\tdot2^2, 4^2 \\        
3^2\tdot1^2 & 3^2\tdot1^2, 6\tdot1^2  \\
4^2 & 2^2\tdot1^4, 2^3\tdot1^2, 2^4, 4\tdot1^4, \\
& 4\tdot2\tdot1^2, 4\tdot2^2, 4^2, 8 \\
5\tdot1^3 & 5\tdot2\tdot1 \\
6\tdot2 & 3^2\tdot 2,6\tdot2 \\
7\tdot1 & 7\tdot1 \\
8 & 2^4, 4\tdot2^2, 4^2, 8 \\
\hline
\hline
\tspacer n=9 & \\
\beta & \gamma\bspacer\\
\hline\tspacer
1^9 & 1^9, 2\tdot1^7, 2^2\tdot1^5, 2^3\tdot1^3, 2^4 \tdot1 \\
2^4\tdot 1 & 4^2\tdot 1 \\
3^2\tdot 1^3 & 3^2\tdot 1^3, 3^2\tdot 2\tdot 1, 6 \tdot1^3, 6\tdot 2 \tdot1  \\
3^3 & 2^3\tdot 1^3, 2^4\tdot 1, 3 \tdot2^2\tdot 1^2, 3\tdot 2^3, \\
    &  3^2\tdot 2\tdot 1, 3^3, 6 \tdot1^3, 6\tdot 2\tdot 1, 6\tdot 3  \\
4^2\tdot1 & 8\tdot1 \\
9 & 2^4\tdot 1,3\tdot2^3, 6\tdot 2\tdot 1,  6\tdot3,9\\
\hline
\end{array}
\verynormalcols 

&&

\verynarrowcols\begin{array}{|r|r|}
\hline
\tspacer n=10 & \\
\beta & \gamma\bspacer\\
\hline\tspacer
1^{10} & 1^{10}, 2\tdot1^8, 2^2\tdot1^6, 2^3\tdot1^4, 2^4\tdot1^2\\
2^4\tdot1^2 & 4^2\tdot1^2 \\
2^5 & 2^5, 4\tdot2^3, 4^2\tdot2 \\
3^2\tdot1^4 & 3^2\tdot1^4, 3^2\tdot2\tdot1^2,  6\tdot1^4, 6\tdot2\tdot1^2\\
3^3\tdot1 & 3^3\tdot1, 6\tdot3\tdot1 \\
4^2\tdot1^2 & 8\tdot1^2\\
4^2\tdot2 & 8\tdot2\\
5\tdot1^5 & 5\tdot2^2\tdot1 \\
5^2 & 2^2\tdot1^6, 2^3\tdot1^4, 2^4\tdot1^2, 5\tdot1^5, 5\tdot2\tdot1^3, 5\tdot2^2\tdot1, 5^2 \\
6\tdot2^2 & 6\tdot2^2, 6\tdot4\\
7\tdot1^3 & 7\tdot2\tdot1 \\
9\tdot1 & 9\tdot1 \\
10 & 2^5, 4\tdot2^3, 4^2\tdot2,10 \\
\hline
\hline
\tspacer n=11 & \\
\beta & \gamma\bspacer\\
\hline\tspacer
1^{11} & 1^{11}, 2\tdot1^9, 2^2\tdot1^7, 2^3\tdot1^5, 2^4\tdot1^3, 2^5\tdot1 \\
2^4\tdot1^3 & 4^2\tdot 1^3, 4^2\tdot 2\tdot 1 \\
3^2\tdot1^5 & 3^2\tdot 2\tdot 1^3, 3^2\tdot 2^2\tdot 1, 6\tdot 2 \tdot1^3, 6\tdot 2^2\tdot 1 \\
3^3\tdot 1^2 & 6\tdot3\tdot 1^2\\
4^2\tdot1^3 & 8\tdot1^3, 8\tdot2\tdot1 \\
5^2\tdot1 & 5^2\tdot1, 10 \tdot1 \\
11 & 2^5\tdot1, 11 \\
\hline
\hline
\tspacer n=12 & \\
\beta & \gamma\bspacer\\
\hline\tspacer
1^{12} & 1^{12}, 2\tdot1^{10}, 2^2\tdot1^8, 2^3\tdot1^6, 2^4\tdot1^4, 2^5\tdot1^2 \\
2^4\tdot1^4 & 4^2\tdot1^4, 4^2\tdot2\tdot1^2 \\
2^6 & 2\tdot1^{10}, 2^2\tdot1^8, 2^3\tdot1^6, 2^4\tdot1^4, 2^5\tdot1^2, 2^6, 4\tdot1^8,4\tdot2\tdot1^6, \\
& 4\tdot2^2\tdot1^4, 4\tdot2^3\tdot1^2, 4\tdot2^4, 4^2\tdot1^4, 4^2\tdot2\tdot1^2, 4^2\tdot2^2, 4^3 \\
3^2\tdot1^6 & 3^2\tdot2\tdot1^4, 3^2\tdot2^2\tdot1^2, 6\tdot2\tdot1^4, 6\tdot2^2\tdot1^2\\
3^3\tdot1^3 & 3^3\tdot1^3, 3^3\tdot2\tdot1, 6\tdot3\tdot1^3, 6\tdot3\tdot2\tdot1\\
3^4 & 2\tdot1^{10}, 2^2\tdot1^8, 2^3\tdot1^6, 2^4\tdot1^4, 2^5\tdot1^2, \\
& 3\tdot1^9, 3\tdot2\tdot1^7, 3\tdot2^2\tdot1^5, 3\tdot2^3\tdot1^3, 3\tdot2^4\tdot1, \\
& 3^2\tdot 1^6, 3^2\tdot 2\tdot 1^4, 3^2 \tdot2^2\tdot 1^2, 3^2\tdot 2^3, 3^3\tdot 1^3, 3^3\tdot 2\tdot 1, 3^4, \\
& 6\tdot1^6, 6\tdot2\tdot1^4, 6\tdot2^2\tdot1^2,6\tdot3\tdot1^3, 6\tdot3\tdot2\tdot1, 6\tdot3^2 \\
4^2\tdot1^4 & 8\tdot1^4, 8\tdot2\tdot1^2 \\
4^2\tdot 2^2 & 8 \tdot2\tdot 1^2, 8\tdot 2^2, 8\tdot 4 \\
4^3 & 2^6, 4\tdot2^4, 4^2\tdot 2^2, 4^3, 8\tdot 2^2, 8\tdot 4 \\
5^2\tdot 1^2 & 5^2\tdot 1^2, 10\tdot 1^2 \\
6\tdot 2^3 & 3^2\tdot 2^3, 4\tdot 3^2\tdot 2, 6\tdot2^3, 6\tdot 4\tdot 2\\
6^2 &  2^3\tdot 1^6, 2^4 \tdot1^4, 2^5 \tdot1^2, 2^6, 
  3 \tdot2^2 \tdot1^5, 3\tdot 2^3\tdot 1^3, 3\tdot 2^4\tdot 1, \\
& 3^2\tdot 2\tdot 1^4, 3^2\tdot 2^2 \tdot1^2, 3^2\tdot 2^3, 3^3\tdot 2\tdot 1, 4\tdot 2\tdot 1^6, \\
& 4 \tdot2^2\tdot 1^4, 4\tdot 2^3 \tdot1^2, 4\tdot 2^4, 4\tdot 3\tdot 1^5, 4 \tdot3 \tdot2\tdot 1^3, 4\tdot 3\tdot 2^2\tdot 1,\\
& 4 \tdot3^2\tdot 1^2, 4\tdot 3^2\tdot 2, 4^2\tdot 1^4,
4^2\tdot 2\tdot 1^2, 4^2 \tdot2^2, 4^2 \tdot3 \tdot1, 4^3, \\ 
& 6\tdot 1^6, 6\tdot 2\tdot 1^4, 6 \tdot2^2 \tdot1^2, 6\tdot 2^3, 6\tdot 3 \tdot1^3, 6 \tdot3 \tdot2 \tdot1, 6\tdot 3^2,\\
&6\tdot 4\tdot 1^2, 6 \tdot4\tdot 2, 6^2, 12\\
7\tdot1^5 & 7\tdot 2^2\tdot 1 \\
9\tdot 1^3 & 9\tdot 2\tdot 1 \\
9\tdot 3 & 9 \tdot2 \tdot1, 9\tdot 3 \\
10 \tdot2 & 5^2\tdot 2, 10\tdot 2\\
11\tdot 1& 11\tdot 1 \\
12 & 2^6, 4\tdot 2^4, 4^2\tdot 2^2, 4^3, 6\tdot 2^3, 6\tdot 4 \tdot2, 6^2, 8 \tdot2^2, 8\tdot 4, 12\\
\hline
\end{array}
\verynormalcols 
\end{array}
\]
\caption{\label{T:cat12part1}
Cycle structures of $\beta$ and $\gamma$ such that 
$(\id, \beta, \gamma; (12)) \in \ptp(n)$ for $n \le 12$.}
\end{table}

\begin{table}
\[
\begin{array}{ccc}
\verynarrowcols\begin{array}{|r|r|}
\hline
\tspacer n=13 & \\
\beta & \gamma\bspacer\\
\hline\tspacer
1^{13} & 1^{13}, 2\tdot1^{11}, 2^2\tdot1^{9}, 2^3\tdot1^7, 2^4\tdot1^5, 2^5\tdot1^3, 2^6\tdot1 \\
2^4\tdot1^5 & 4^2\tdot1^5, 4^2\tdot2\tdot1^3, 4^2\tdot2^2\tdot1\\
2^6\tdot1 & 4^3\tdot1 \\
3^3\tdot1^4 & 6\tdot3\tdot2\tdot1^2 \\
3^4\tdot1 & 3^4\tdot1, 6\tdot3^2\tdot1, 6^2\tdot1 \\
4^2\tdot1^5 & 8\tdot2\tdot1^3, 8\tdot2^2\tdot 1\\
4^2\tdot2^2\tdot1 & 8\tdot4\tdot1\\
5^2\tdot1^3 & 5^2\tdot1^3, 5^2\tdot2\tdot1, 10\tdot1^3, 10\tdot2\tdot1\\
6^2\tdot1 & 12\tdot1 \\
13 & 2^6\tdot 1,13\\
\hline
\hline
\tspacer n=14 & \\
\beta & \gamma\bspacer\\
\hline\tspacer
1^{14} & 1^{14}, 2\tdot1^{12}, 2^2\tdot1^{10}, 2^3\tdot1^8, 2^4\tdot1^6,
 2^5\tdot1^4, 2^6\tdot1^2 \\
2^4\tdot1^6 & 4^2\tdot1^6, 4^2\tdot2\tdot1^4, 4^2\tdot2^2\tdot1^2 \\
2^6\tdot1^2 & 4^3\tdot1^2\\
2^7 & 2^7, 4\tdot2^5, 4^2\tdot2^3, 4^3\tdot2 \\
3^3\tdot1^5 & 3^3\tdot 2\tdot 1^3,3^3\tdot2^2\tdot1,6\tdot 3\tdot 2\tdot 1^3, 6\tdot3\tdot2^2\tdot1 \\
3^4\tdot1^2 & 3^4\tdot1^2,6\tdot3^2\tdot1^2,6^2\tdot1^2 \\
4^2\tdot1^6 & 8\tdot2\tdot1^4, 8\tdot2^2\tdot1^2 \\
4^2\tdot2^2\tdot1^2 & 8\tdot4\tdot1^2\\
4^2\tdot2^3 & 8\tdot2^3, 8\tdot4\tdot2\\
5^2\tdot1^4 & 5^2\tdot1^4, 5^2\tdot2\tdot1^2, 10\tdot1^4, 10\tdot2\tdot1^2 \\
6^2\tdot1^2 & 12\tdot1^2\\
6^2\tdot2 & 6^2\tdot2, 12\tdot2 \\
7\tdot1^7 & 7\tdot2^3\tdot1 \\
7^2 & 2^3\tdot1^8, 2^4\tdot1^6, 2^5\tdot1^4, 2^6\tdot1^2, \\
&7\tdot1^7, 7\tdot2\tdot1^5, 7\tdot2^2\tdot1^3, 7\tdot2^3\tdot1, 7^2\\
9\tdot1^5 & 9\tdot2^2\tdot1 \\
10\tdot2^2 & 10\tdot2^2,10\tdot4\\
11\tdot1^3 & 11\tdot2\tdot1 \\
13\tdot1 & 13\tdot1 \\
14 &  2^7, 4\tdot2^5, 4^2\tdot2^3, 4^3\tdot2, 14 \\
\hline
\hline
\tspacer n=15 & \\
\beta & \gamma\bspacer\\
\hline\tspacer
1^{15}& 1^{15}, 2\tdot1^{13}, 2^2\tdot1^{11}, 2^3\tdot1^9,\\
 & 2^4\tdot1^7, 2^5\tdot1^5, 2^6\tdot1^3, 2^7\tdot1\\
2^4\tdot1^7& 4^2\tdot2\tdot1^5, 4^2\tdot2^2\tdot1^3, 4^2\tdot2^3\tdot1\\
2^6\tdot1^3& 4^3\tdot1^3, 4^3\tdot2\tdot1\\
3^4\tdot1^3& 3^4\tdot1^3, 3^4\tdot2\tdot1, 6\tdot3^2\tdot1^3, 6\tdot3^2\tdot2\tdot1, 6^2\tdot1^3, 6^2\tdot2\tdot1\\
3^5& 2^5\tdot1^5, 2^6\tdot1^3, 2^7\tdot1, 3\tdot2^4\tdot1^4, 3\tdot2^5\tdot1^2, 3\tdot2^6,\\ 
&3^2\tdot2^3\tdot1^3, 3^2\tdot2^4\tdot1, 3^3\tdot2^2\tdot1^2, 3^3\tdot2^3,  \\ 
& 3^4\tdot2\tdot1, 3^5, 6\tdot2^2\tdot1^5, 6\tdot2^3\tdot1^3, 6\tdot2^4\tdot1,\\ 
&6\tdot3\tdot2\tdot1^4, 6\tdot3\tdot2^2\tdot1^2, 6\tdot3\tdot2^3, 6\tdot3^2\tdot1^3, \\ 
 & 6\tdot3^2\tdot2\tdot1, 6\tdot3^3, 6^2\tdot1^3, 6^2\tdot2\tdot1, 6^2\tdot3\\
4^2\tdot1^7& 8\tdot2^2\tdot1^3, 8\tdot2^3\tdot1\\
5^2\tdot1^5& 5^2\tdot1^5, 5^2\tdot2\tdot1^3, 5^2\tdot2^2\tdot1,\\ 
 &10\tdot1^5, 10\tdot2\tdot1^3, 10\tdot2^2\tdot1\\
5^3& 2^6\tdot1^3, 2^7\tdot1, 5\tdot2^4\tdot1^2, 5\tdot2^5, 5^2\tdot2^2\tdot1, 5^3,\\ 
&10\tdot2\tdot1^3, 10\tdot2^2\tdot1, 10\tdot5\\
6^2\tdot1^3& 12\tdot1^3, 12\tdot2\tdot1\\
6^2\tdot3& 4^3\tdot2\tdot1, 4^3\tdot3, 12\tdot2\tdot1, 12\tdot3\\
7^2\tdot1& 7^2\tdot1, 14\tdot1\\
15& 2^7\tdot1, 3\tdot2^6, 5\tdot2^5, 6\tdot2^4\tdot1, 6\tdot3\tdot2^3, 6\tdot5\tdot2^2,\\ 
&6^2\tdot2\tdot1, 6^2\tdot3, 10\tdot2^2\tdot1, 10\tdot3\tdot2, 10\tdot5, 15\\
\hline
\end{array}
\verynormalcols 

&&

\verynarrowcols\begin{array}{|r|r|}
\hline
\tspacer n=16 & \\
\beta & \gamma\bspacer\\
\hline\tspacer
1^{16}& 1^{16}, 2\tdot1^{14}, 2^2\tdot1^{12}, 2^3\tdot1^{10}, 2^4\tdot1^8, 2^5\tdot1^6, 2^6\tdot1^4, 2^7\tdot1^2\\
2^4\tdot1^8& 4^2\tdot2\tdot1^6, 4^2\tdot2^2\tdot1^4, 4^2\tdot2^3\tdot1^2\\
2^6\tdot1^4& 4^3\tdot1^4, 4^3\tdot2\tdot1^2\\
2^8& 2\tdot1^{14}, 2^2\tdot1^{12}, 2^3\tdot1^{10}, 2^4\tdot1^8, 2^5\tdot1^6, 2^6\tdot1^4, 2^7\tdot1^2, 2^8,\\ 
 &4\tdot1^{12}, 4\tdot2\tdot1^{10}, 4\tdot2^2\tdot1^8, 4\tdot2^3\tdot1^6, 4\tdot2^4\tdot1^4, 4\tdot2^5\tdot1^2, 4\tdot2^6,\\ 
 &4^2\tdot1^8, 4^2\tdot2\tdot1^6, 4^2\tdot2^2\tdot1^4, 4^2\tdot2^3\tdot1^2, 4^2\tdot2^4,\\
 & 4^3\tdot1^4, 4^3\tdot2\tdot1^2, 4^3\tdot2^2, 4^4\\
3^3\tdot1^7& 3^3\tdot2^2\tdot1^3, 3^3\tdot2^3\tdot1, 6\tdot3\tdot2^2\tdot1^3, 6\tdot3\tdot2^3\tdot1\\
3^4\tdot1^4& 3^4\tdot1^4, 3^4\tdot2\tdot1^2, 6\tdot3^2\tdot1^4, 6\tdot3^2\tdot2\tdot1^2, 6^2\tdot1^4, 6^2\tdot2\tdot1^2\\
3^5\tdot1& 3^5\tdot1, 6\tdot3^3\tdot1, 6^2\tdot3\tdot1\\
4^2\tdot1^8& 8\tdot2^2\tdot1^4, 8\tdot2^3\tdot1^2\\
4^2\tdot2^4& 8\tdot2^2\tdot1^4, 8\tdot2^3\tdot1^2, 8\tdot2^4, 8\tdot4\tdot1^4, 8\tdot4\tdot2\tdot1^2, 8\tdot4\tdot2^2, 8\tdot4^2\\
4^4& 2^2\tdot1^{12}, 2^3\tdot1^{10}, 2^4\tdot1^8, 2^5\tdot1^6, 2^6\tdot1^4, 2^7\tdot1^2, 2^8,\\ 
&4\tdot1^{12}, 4\tdot2\tdot1^{10}, 4\tdot2^2\tdot1^8, 4\tdot2^3\tdot1^6, 4\tdot2^4\tdot1^4, 4\tdot2^5\tdot1^2, 4\tdot2^6,\\ 
&4^2\tdot1^8, 4^2\tdot2\tdot1^6, 4^2\tdot2^2\tdot1^4, 4^2\tdot2^3\tdot1^2, 4^2\tdot2^4,\\
 & 4^3\tdot1^4, 4^3\tdot2\tdot1^2, 4^3\tdot2^2, 4^4, 8\tdot1^8, 8\tdot2\tdot1^6, 8\tdot2^2\tdot1^4,\\ 
 &8\tdot2^3\tdot1^2, 8\tdot2^4, 8\tdot4\tdot1^4, 8\tdot4\tdot2\tdot1^2, 8\tdot4\tdot2^2, 8\tdot4^2, 8^2\\
5^2\tdot1^6& 5^2\tdot1^6, 5^2\tdot2\tdot1^4, 5^2\tdot2^2\tdot1^2, 10\tdot1^6, 10\tdot2\tdot1^4, 10\tdot2^2\tdot1^2\\
5^3\tdot1& 5^3\tdot1, 10\tdot5\tdot1\\
6^2\tdot1^4& 12\tdot1^4, 12\tdot2\tdot1^2\\
6^2\tdot2^2& 3^4\tdot2\tdot1^2, 3^4\tdot2^2, 4\tdot3^4, 6\tdot3^2\tdot2\tdot1^2, 6\tdot3^2\tdot2^2, 6\tdot4\tdot3^2,\\ 
 &6^2\tdot2\tdot1^2, 6^2\tdot2^2, 6^2\tdot4, 12\tdot2\tdot1^2, 12\tdot2^2, 12\tdot4\\
7^2\tdot1^2& 7^2\tdot1^2, 14\tdot1^2\\
8^2& 2^4\tdot1^8, 2^5\tdot1^6, 2^6\tdot1^4, 2^7\tdot1^2, 2^8, 4\tdot2^2\tdot1^8, 4\tdot2^3\tdot1^6, 4\tdot2^4\tdot1^4,\\
& 4\tdot2^5\tdot1^2, 4\tdot2^6, 4^2\tdot1^8, 4^2\tdot2\tdot1^6, 4^2\tdot2^2\tdot1^4, 4^2\tdot2^3\tdot1^2, 4^2\tdot2^4,\\ 
 &4^3\tdot1^4, 4^3\tdot2\tdot1^2, 4^3\tdot2^2, 4^4, 8\tdot1^8, 8\tdot2\tdot1^6, 8\tdot2^2\tdot1^4, 8\tdot2^3\tdot1^2, \\ 
 & 8\tdot2^4, 8\tdot4\tdot1^4, 8\tdot4\tdot2\tdot1^2, 8\tdot4\tdot2^2, 8\tdot4^2, 8^2, 16\\
10\tdot2^3& 5^2\tdot2^3, 5^2\tdot4\tdot2, 10\tdot2^3, 10\tdot4\tdot2\\
12\tdot4& 3^4\tdot2^2, 4\tdot3^4, 6\tdot3^2\tdot2^2, 6\tdot4\tdot3^2, 6^2\tdot2^2, 6^2\tdot4, 12\tdot2^2, 12\tdot4\\
14\tdot2& 7^2\tdot2, 14\tdot2\\
16& 2^8, 4\tdot2^6, 4^2\tdot2^4, 4^3\tdot2^2, 4^4, 8\tdot2^4, 8\tdot4\tdot2^2, 8\tdot4^2, 8^2, 16\\
\hline
\hline
\tspacer n=17 & \\
\beta & \gamma\bspacer\\
\hline\tspacer
1^{17}& 1^{17}, 2\tdot1^{15}, 2^2\tdot1^{13}, 2^3\tdot1^{11}, 2^4\tdot1^9, 2^5\tdot1^7, 2^6\tdot1^5, 2^7\tdot1^3, 2^8\tdot1\\
2^6\tdot1^5& 4^3\tdot1^5, 4^3\tdot2\tdot1^3, 4^3\tdot2^2\tdot1\\
3^4\tdot1^5& 3^4\tdot1^5, 3^4\tdot2\tdot1^3, 3^4\tdot2^2\tdot1,\\ 
 &6\tdot3^2\tdot1^5, 6\tdot3^2\tdot2\tdot1^3, 6\tdot3^2\tdot2^2\tdot1, 6^2\tdot1^5, 6^2\tdot2\tdot1^3, 6^2\tdot2^2\tdot1\\
3^5\tdot1^2& 6\tdot3^3\tdot1^2, 6^2\tdot3\tdot1^2\\
5^2\tdot1^7& 5^2\tdot2\tdot1^5, 5^2\tdot2^2\tdot1^3, 5^2\tdot2^3\tdot1, 10\tdot2\tdot1^5, 10\tdot2^2\tdot1^3, 10\tdot2^3\tdot1\\
6^2\tdot1^5& 12\tdot1^5, 12\tdot2\tdot1^3, 12\tdot2^2\tdot1\\
7^2\tdot1^3& 7^2\tdot1^3, 7^2\tdot2\tdot1, 14\tdot1^3, 14\tdot2\tdot1\\
17& 2^8\tdot1, 17\\
\hline
\end{array}
\verynormalcols 
\end{array}
\]
\caption{\label{T:cat12part2}
Cycle structures of $\beta$ and $\gamma$ such that 
$(\id, \beta, \gamma; (12)) \in \ptp(n)$ for $13\le n \le 17$.}
\end{table}

\begin{table}
\[
\begin{array}{ccccc}
\verynarrowcols\begin{array}{|r|}
\hline
\tspacer n=2  \\
  \gamma\bspacer\\
\hline\tspacer
  1^2 \\
\hline
\hline
\tspacer n=3  \\
  \gamma\bspacer\\
\hline\tspacer
  1^3, 2\tdot1 \\
\hline
\hline
\tspacer n=4  \\
  \gamma\bspacer\\
\hline\tspacer
  1^4, 2\tdot1^2, 2^2 \\
 \hline
 \hline
 \tspacer n=5  \\
  \gamma\bspacer\\
\hline\tspacer
  1^5, 2^2\tdot1, 4\tdot1 \\
 \hline
 \hline
 \tspacer n=6  \\
  \gamma\bspacer\\
\hline\tspacer
  1^6, 2^2\tdot1^2, 3\tdot1^3 \\
 \hline
 \hline
 \tspacer n=7  \\
  \gamma\bspacer\\
\hline\tspacer
  1^7, 2^2\tdot1^3, 2^3\tdot1, \\
  4\tdot1^3, 4\tdot2\tdot1, 5\tdot1^2, 7 \\
 \hline
 \hline
 \tspacer n=8  \\
  \gamma\bspacer\\
\hline\tspacer
  1^8, 2^2\tdot1^4, 2^3\tdot1^2, \\
  2^4, 4\tdot1^4, 4\tdot2\tdot1^2, \\
  4\tdot2^2,4^2, 7\tdot1 \\
 \hline
 \end{array}
 \verynormalcols

&&

\verynarrowcols\begin{array}{|r|}
 \hline
 \tspacer n=9  \\
  \gamma\bspacer\\
\hline\tspacer
  1^9, 2^3\tdot1^3, 2^4\tdot1, \\
  3^2\tdot 1^3, 3^3, 4^2\tdot1, 5\tdot1^4, \\
  6\tdot1^3, 6\tdot2\tdot1, 8\tdot1 \\
 \hline
 \hline
 \tspacer n=10  \\
  \gamma\bspacer\\
\hline\tspacer
  1^{10}, 2^3\tdot1^4, 2^4\tdot1^2, 3^3\tdot1,\\ 
 4^2\tdot1^2, 5\tdot1^5, 7\tdot1^3, 8\tdot1^2 \\
 \hline
 \hline
 \tspacer n=11  \\
  \gamma\bspacer\\
\hline\tspacer
 1^{11}, 2^3\tdot1^5, 2^4\tdot1^3, 2^5\tdot1, 3^3\tdot1^2, \\
 4^2\tdot1^3, 4^2\tdot2\tdot1, 5^2\tdot1, 7\tdot1^4, 10\tdot1\\
\hline
\hline
\tspacer n=12  \\
  \gamma\bspacer\\
\hline\tspacer
 1^{12}, 2^3\tdot1^6, 2^4\tdot1^4, 2^5\tdot1^2, 2^6, \\
 3^2\tdot1^6, 3^3\tdot1^3, 4^2\tdot1^4, 4^2\tdot2\tdot1^2, 4^2\tdot2^2, \\
 5^2\tdot1^2, 6\tdot1^6,6 \tdot2^2\tdot1^2, 6\tdot3\tdot1^3, \\
 8\tdot1^4,8\tdot2\tdot1^2, 8\tdot2^2, 9\tdot1^3 \\
\hline
\hline
\tspacer n=13  \\
  \gamma\bspacer\\
\hline\tspacer
 1^{13}, 2^4\tdot1^5, 2^5\tdot1^3, 2^6\tdot1, 3^3\tdot1^4, \\
 4^2\tdot1^5, 4^2\tdot2^2\tdot1, 4^3\tdot1, 5^2\tdot1^3, 7\tdot1^6, \\
 8\tdot1^5, 8\tdot2^2\tdot1, 8\tdot4\tdot1, 10\tdot1^3, 10\tdot2\tdot1, \\
 11\tdot1^2, 13 \\
\hline
\end{array}
\verynormalcols

&&

\verynarrowcols\begin{array}{|r|}
\hline
\tspacer n=14  \\
  \gamma\bspacer\\
\hline\tspacer
 1^{14}, 2^4\tdot1^6, 2^5\tdot1^4, 2^6\tdot1^2, 3^3\tdot1^5,\\
 4^2\tdot1^6, 4^2\tdot2^2\tdot1^2, 4^3\tdot1^2, 5^2\tdot1^4,\\ 
 7\tdot1^7, 7^2, 10\tdot1^4, 10\tdot2\tdot1^2, 13\tdot1 \\
\hline
\hline
\tspacer n=15  \\
  \gamma\bspacer\\
\hline\tspacer
1^{15},  2^4\tdot1^7,  2^5\tdot1^5,  2^6\tdot1^3,  2^7\tdot1, 3^3\tdot1^6,  3^4\tdot1^3, \\  
4^2\tdot1^7,  4^2\tdot2^2\tdot1^3,  4^2\tdot2^3\tdot1,  4^3\tdot1^3,  4^3\tdot2\tdot1,\\  
5^2\tdot1^5,  5^3,  6^2\tdot1^3,  6^2\tdot2\tdot1,  7^2\tdot1,\\  
8\tdot1^7,  8\tdot2^2\tdot1^3,  8\tdot2^3\tdot1,  8\tdot4\tdot1^3,  8\tdot4\tdot2\tdot1,\\  
9\tdot1^6,  9\tdot3\tdot1^3,  11\tdot1^4,  12\tdot1^3,  12\tdot2\tdot1,  14\tdot1\\
\hline
\hline
\tspacer n=16  \\
  \gamma\bspacer\\
\hline\tspacer
1^{16},  2^4\tdot1^8,  2^5\tdot1^6,  2^6\tdot1^4,  2^7\tdot1^2,  2^8, 3^3\tdot1^7,  \\  
4^2\tdot1^8,  4^2\tdot2^2\tdot1^4,  4^2\tdot2^3\tdot1^2,  4^2\tdot2^4,\\  
4^3\tdot1^4,  4^3\tdot2\tdot1^2,  4^3\tdot2^2,  4^4, 5^2\tdot1^6,  5^3\tdot1,\\  
6\tdot3\tdot2^2\tdot1^3,  7^2\tdot1^2, 8\tdot1^8,  8\tdot2^2\tdot1^4,  8\tdot2^3\tdot1^2,  8\tdot2^4,\\  
8\tdot4\tdot1^4,  8\tdot4\tdot2\tdot1^2,  8\tdot4\tdot2^2,  8\tdot4^2,  8^2,\\  
10\tdot1^6,  10\tdot2^2\tdot1^2,  11\tdot1^5,  13\tdot1^3,  14\tdot1^2\\
\hline
\hline
\tspacer n=17  \\
\gamma\bspacer\\
\hline\tspacer
1^{17},  2^5\tdot1^7,  2^6\tdot1^5,  2^7\tdot1^3,  2^8\tdot1, 3^3\tdot1^8,  \\  
4^3\tdot1^5,  4^3\tdot2^2\tdot1,  4^4\tdot1,  5^2\tdot1^7,  5^3\tdot1^2, 7^2\tdot1^3,  8^2\tdot1,\\  
10\tdot1^7,  10\tdot2^2\tdot1^3,  10\tdot2^3\tdot1,  10\tdot5\tdot1^2,  13\tdot1^4, 16\tdot1\\
\hline
\end{array}
\verynormalcols 
\end{array}
\]
\caption{\label{T:cat123}Cycle structures of $\gamma$ such that 
$(\id, \id, \gamma; (123)) \in \ptp(n)$ for $n \le 17$. 
}
\end{table}

We have proved several general necessary conditions for
$(\id,\id,\gamma;(123))$ to be in $\ptp(n)$, and provided 
complete characterisations of some simple cases. 
It is time to bring all our results from this section
and the previous one together.

\section{Small orders and asymptotics}\label{s:end}

In this final section we tie the earlier threads together. We apply
the theory we have developed to two opposite extremes, exhaustively
checking small orders before looking at some asymptotic trends.

We first describe how we used the preceding results to establish exactly
what $\ptp(n)$ is when $n\le17$. For each possible cycle structure of
$\beta$ and $\gamma$ we first considered whether any of our results
showed that $(\id,\beta,\gamma;(12))\notin\ptp(n)$ or
$(\id,\id,\gamma;(123))\notin\ptp(n)$. 
When applying \lref{l:lcm12} and \lref{l:lcm123} we checked for each
block that there were sufficient symbols available to fill it.
When applying \tref{t:strlcm12} and \tref{t:strlcm123} we chose
$\Lambda$ to be the set of divisors of the length of some cycle of
$\beta$ and $\gamma$, respectively. This guaranteed that we would
find a (not necessarily proper) subsquare, of order say $s$.
If $n/2<s<n$ this is an immediate contradiction. If $s\le n/2$
the subsquare has an induced autoparatopism, and we checked
with a recursive call that it was plausible. If $s=n/2$ we also
considered the complementary subsquare, as described in the 
example after \tref{t:strlcm12}.

If none of our results precluded a particular autoparatopism, then we
attempted to construct a Latin square with that autoparatopism.  We
used the explicit constructions given in the proofs of \lref{l:totsym}
and \tref{t:betaid12} in cases where these results applied.  Also, if
\cref{cy:fixsbsq12} or \cref{cy:fixsbsq123} implied the existence of a
subsquare, we built that subsquare first.  With the caveats just
mentioned, a simple backtracking algorithm was quickly able to
construct a Latin square with the desired autoparatopism in all the
required cases. The resulting Latin squares can be downloaded from
\cite{WWWW}.

Thus, by combining \lref{l:lcm12} and Theorems \ref{t:strlcm12},
\ref{t:ord12}, \ref{t:cycdiv12}, \ref{t:fixgameven12},
\ref{t:fixgamodd12} and \ref{t:evencyc12} we found a catalogue of all
possible cycle structures for $(\id,\beta,\gamma;(12))\in\ptp(n)$ for
$n\le17$.  The results are given in \Tref{T:cat12part1} and
\Tref{T:cat12part2}.

Similarly, by combining \lref{l:lcm123} and \tref{t:strlcm123}
with the results in \sref{s:123}
we found a catalogue of all possible cycle structures for
$(\id,\id,\gamma;(123))\in\ptp(n)$ for $n\le17$.
The results are given in \Tref{T:cat123}. 
By \lref{l:conjauto} and \tref{t:conj} it is possible to deduce
from Tables \ref{T:cat12part1}, \ref{T:cat12part2} and \ref{T:cat123} 
a list of all 
$(\alpha,\beta,\gamma;\delta)\in\ptp(n)$ for $n\le17$, where
$\delta\ne\id$. The $\delta=\id$ case was already solved 
in \cite{Stones}.

\bigskip

We end with some interesting comparisons with the following theorem on
autotopisms by McKay {\em et al.} \cite{MWZ15}. In it and the
subsequent results, the phrase ``almost all'' refers to the asymptotic
proportion as $n\rightarrow\infty$.

\begin{theo}
  For almost all $\alpha \in \sym_n$, there are no
  $\beta,\gamma\in\sym_n$ such that $(\alpha,\beta,\gamma)\in\atp(n)$.
\end{theo}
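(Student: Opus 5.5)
The plan is to combine the strongly lcm-closed subsquare theorem for autotopisms, \cite[Thm~3.7]{Stones} (the autotopism analogue of \tref{t:strlcm12}), with standard facts about the cycle lengths of a uniformly random permutation. \cite[Thm~3.7]{Stones} says: if $(\alpha,\beta,\gamma)\in\atp(L)$ and $\Lambda\in\idl$, then the rows $R_\Lambda=\{i:o_\alpha(i)\in\Lambda\}$, if non-empty, carry a subsquare of $L$. A proper subsquare has order at most $n/2$. So it suffices to show that almost every $\alpha\in\sym_n$ has some $\Lambda\in\idl$ with $n/2<|R_\Lambda|<n$. This condition involves only $\alpha$, so no information about $\beta$ or $\gamma$ is needed.

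The choice of $\Lambda$ is as follows. Fix a prime power $q=p^k$ and let $\Lambda_q=\{m\ge1:q\nmid m\}$. This set is closed under taking divisors, and $q\mid\lcm(a,b)$ holds if and only if $q\mid a$ or $q\mid b$. Hence $\lcm(a,b)\in\Lambda_q$ exactly when $a,b\in\Lambda_q$, so $\Lambda_q\in\idl$. The complement $[n]\setminus R_{\Lambda_q}$ is the union of the cycles of $\alpha$ whose length is divisible by $q$. Now suppose $\alpha$ has a cycle of length $m<n/2$ with $q\mid m$, and no other cycle of $\alpha$ has length divisible by $q$. Then $|R_{\Lambda_q}|=n-m$, which lies strictly between $n/2$ and $n$. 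The subsquare on $R_{\Lambda_q}$ therefore cannot exist, and $\alpha$ lies in no autotopism.

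It remains to prove the probabilistic statement: with probability $1-o(1)$, a random $\alpha$ has a cycle of length $m<n/2$ and a prime power $q\mid m$ with $q>(\log n)^2$, such that $q$ divides no other cycle length. I would proceed in three steps.

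First, I would use the facts that the expected number of cycles of length $m$ is $1/m$ (for $m\le n$), and that cycle counts in disjoint length ranges are asymptotically independent Poisson variables. From these, with high probability $\alpha$ has a cycle whose length $m$ lies in $[n^{1/3},n/3]$: the expected number of such cycles is about $\tfrac23\log n\to\infty$, and the count is concentrated.

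Second, I would show that such an $m$ typically has a prime power divisor $q>(\log n)^2$. This holds because integers $m$ with all prime-power divisors at most $(\log n)^2$ are very sparse among integers of size at least $n^{1/3}$, by smooth-number estimates. Weighting each $m$ by $1/m$, the expected number of cycles with length in $[n^{1/3},n/3]$ and smooth length is $o(1)$.

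Third, I would handle collisions. Conditioned on a cycle of length $m$, the remaining cycles form a random permutation of $n-m$ points. The expected number of these cycles with length divisible by a given $q$ is at most $\sum_{j\le n/q}1/(jq)\le(1+\log n)/q=o(1)$. A union bound over the boundedly many valid choices of $q$ that divide $m$ then gives failure probability $o(1)$. One point needs care: $q$ should be chosen as a function of $m$ alone, for example the largest prime-power divisor of $m$, so that no union bound over all $q$ is needed.

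I expect the main obstacle to be the rigorous control of dependence in this third step, that is, conditioning on the selected cycle and still bounding the probability of another cycle length divisible by $q$. I would handle it by a first-moment computation over ordered pairs of cycles. Specifically, I would bound
\[
\sum_{m}\ \sum_{m'\equiv0 \ (\mathrm{mod}\ q(m))}\frac{1}{m\,m'},
\]
summed over smooth-free $m\in[n^{1/3},n/3]$ and $m'\le n-m$, by $O\bigl(\log n\cdot\log n/(\log n)^2\bigr)$. The threshold for $q$ may need to be raised slightly, say to $(\log n)^3$, to make this bound $o(1)$. The other steps are routine.
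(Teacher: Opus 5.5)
Your proof has a genuine gap in the deterministic step. The autotopism subsquare theorem does not hold in the form you quote, and your claim that the obstruction ``involves only $\alpha$'' is false.

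For an autotopism $(\alpha,\beta,\gamma)$ the rows and columns are governed by different permutations. The lcm argument, as in the proof of \tref{t:strlcm12}, only shows two things. For each $i\in R_\Lambda$, the symbols of $S_\Lambda=\{k:o_\gamma(k)\in\Lambda\}$ occur in row $i$ exactly in the columns of $C_\Lambda=\{j:o_\beta(j)\in\Lambda\}$. Dually, for each column in $C_\Lambda$ they occur exactly in the rows of $R_\Lambda$. So a subsquare with $|R_\Lambda|=|C_\Lambda|=|S_\Lambda|$ is forced only when $R_\Lambda$ \emph{and} $C_\Lambda$ are both non-empty. In \tref{t:strlcm12} this is automatic only because there $R_\Lambda=C_\Lambda$. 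If $C_\Lambda=\emptyset$, then $S_\Lambda=\emptyset$ and nothing constrains $|R_\Lambda|$.

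Here is a concrete counterexample to your deduction. Take $n=6$, $\alpha=(5\,6)$, $\beta=\gamma=(1\,2)(3\,4)(5\,6)$, and
\[
L=\left[\begin{array}{cccccc}
1&2&3&4&5&6\\
2&1&4&3&6&5\\
3&4&5&6&1&2\\
5&6&1&2&3&4\\
4&5&6&1&2&3\\
6&3&2&5&4&1
\end{array}\right].
\]
One checks that $L(i,j)\gamma=L(i\alpha,j\beta)$ for all $i,j$, so $(\alpha,\beta,\gamma)\in\atp(6)$. Yet with $q=2$, $\alpha$ has exactly one cycle of length divisible by $q$, of length $2<n/2$, and $|R_{\Lambda_2}|=4$ lies strictly between $3$ and $6$. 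No subsquare is forced because $\beta$ has no odd cycle, so $C_{\Lambda_2}=\emptyset$.

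For your $\Lambda_q$ the missing case is easy to exclude. $C_{\Lambda_q}=\emptyset$ means every cycle of $\beta$ has length divisible by $q$, which forces $q\mid n$. So you must also require $q(m)\nmid n$ for the selected cycle. Since $n$ has at most $\log_2 n$ prime-power divisors, the expected number of cycles whose length $m$ lies in your range with $q(m)\mid n$ is at most
\[
\sum_{q\mid n,\ q>T}\ \sum_{m\le n,\ q\mid m}\frac1m\le \log_2 n\cdot\frac{1+\log n}{T}.
\]
This is $o(1)$ for the threshold $T=(\log n)^3$ that you already need for the pair count. With this extra condition your three probabilistic steps go through and give a valid proof.

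For comparison, the paper does not prove this statement; it quotes it from \cite{MWZ15}. It also follows at once from the two ingredients used in the proof of \tref{t:asy12}. By \cite[Thm~2]{MWZ15}, each component of an autotopism has order at most $n^2/4$, while by \cite{ET67} almost all permutations have order $n^{(1/2+o(1))\log n}$. Your repaired route avoids that order bound, at the price of a longer probabilistic analysis.
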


In the same vein we have:

\begin{theo}\label{t:asy12}
  For almost all $\gamma \in \sym_n$, there are no
  $\alpha,\beta\in\sym_n$ such that $(\alpha,\beta,\gamma;(12))\in\ptp(n)$.
\end{theo}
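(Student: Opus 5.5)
We need a necessary condition on $\gamma$ alone that fails for almost all $\gamma\in\sym_n$. The natural candidate is \tref{t:ord12}. By \tref{t:conj} we may replace $(\alpha,\beta,\gamma;(12))$ by $(\id,\alpha\beta,\gamma;(12))$. \tref{t:ord12} then says that if $\gamma$ has odd order $c$, then $\alpha\beta$ has order exactly $c$. Its proof gives more: in that case every cycle length $d$ of $\alpha\beta$ divides $c$.

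The stronger tool is \lref{l:lcm12}, applied one symbol at a time. Fix a symbol $k$ with $o_\gamma(k)=c$. The symbol $k$ appears in every row $i$, say at $(i,j,k)$, and then $\lcm(2a,c)=\lcm(2b,c)=\lcm(2a,2b)$ with $a=o_{\beta}(i)$, $b=o_\beta(j)$. Consequently, for every cycle length $a$ of $\beta$ and every cycle length $c$ of $\gamma$, there is a cycle length $b$ of $\beta$ with $\lcm(2a,c)=\lcm(2a,2b)$. Writing $c=2^{e}c'$ with $c'$ odd, this forces $c'\mid \lcm(a,b)$ and $2^e\mid 2\lcm(a,b)$. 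So each cycle length of $\gamma$ divides twice the lcm of two cycle lengths of $\beta$.

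The clean route I would take is the reverse condition, which needs only the fixed-point or short-orbit type statements. By \tref{t:fixgameven12}, if $\gamma$ has $f$ fixed points and $\beta$ has $r$ cycles of length $d$ with $r>0$, then $f\le (r-1)d+1$. This is not strong enough for random $\gamma$, since random $\gamma$ has few fixed points. Instead I would use \lref{l:lcm12} in the opposite direction. Take $c$ to be the largest cycle length of $\gamma$; for random $\gamma$ it exceeds $n/2$. Let $k$ lie in that cycle, and count the cells $(i,j)$ with $L(i,j)$ in this cycle. There are $nc>n^2/2$ of them. For each such cell, \lref{l:lcm12} requires $c\mid 2\lcm(a,b)$, and $\lcm(2a,c)=\lcm(2b,c)$ equals that lcm. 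The plan is to show that these divisibility constraints force $\gamma$'s cycle type into a set of vanishing measure.

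The key input is a random-permutation fact from \cite{MWZ15}-style arguments (Erd\H{o}s--Tur\'an type): for almost all $\gamma$ there is a prime power $p^e$, with $p$ odd, dividing exactly one cycle length $c_0$ of $\gamma$, and $c_0$ is large. With probability tending to one we may also take $p^e$ large, say greater than $\log n$. Apply \lref{l:lcm12} to symbols of that cycle: for every row $i$ there is a column $j$ such that the triple $(i,j,k)$ satisfies $p^e\mid \lcm(a,b)$. Thus every row $i$ not in a $\beta$-cycle whose length is divisible by $p^e$ must pair with a column $j$ that is in such a cycle. This applies to all $c_0$ symbols, and using the $\sigma$-symmetry swapping rows and columns, we get the following count. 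Let $R$ be the set of points in $\beta$-cycles divisible by $p^e$. The $c_0$ symbols fill $nc_0$ cells inside $(R\times[n])\cup([n]\times R)$, which has at most $2n|R|$ cells. Symbols not divisible by $p^e$ can occupy cells in $R\times R$ only if $\lcm(2a,c)=\lcm(2a,2b)$. That requires $\gamma$ to have many other cycle lengths divisible by $p^e$, contradicting uniqueness, so essentially only $\gamma$'s $p^e$-divisible cycle fills $R\times R$. This gives $|R|\le c_0$, and combined with the previous count, $|R|=c_0$ roughly. Moreover $R\times R$ is then a subsquare on the $c_0$ symbols, and the complementary rows and columns $[n]\setminus R$ must avoid those symbols. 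That contradicts the requirement that every row contain symbol $k$, unless $R=[n]$, which forces $c_0=n$. Finally, $\gamma$ being an $n$-cycle has probability $1/n\to0$.

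The main obstacle is making the probabilistic lemma precise and ensuring that the exclusion argument on $R\times R$ is genuinely forced. I expect to need the almost-sure existence of an odd prime power dividing exactly one cycle length of $\gamma$, which I would quote from the proof of the \cite{MWZ15} theorem.
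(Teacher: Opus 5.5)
\textbf{There is a genuine gap.} It lies in the deterministic half of your argument, specifically in the sentence claiming that symbols whose $\gamma$-cycle length is not divisible by $p^e$ can occupy cells of $R\times R$ only if $\gamma$ has other cycle lengths divisible by $p^e$. For $(i,j)\in R\times R$, both $a=o_\beta(i)$ and $b=o_\beta(j)$ are multiples of $p^e$. So $\lcm(2a,2b)$, $\lcm(2a,c)$ and $\lcm(2b,c)$ are all multiples of $p^e$ whatever $c$ is, and \lref{l:lcm12} puts no $p$-adic restriction at all on the symbol in such a cell.

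Consequently nothing yields $|R|\le c_0$, or that $R\times R$ is a subsquare on the symbols of the $c_0$-cycle. The implication you end with (a unique odd prime power dividing $c_0$ forces $c_0=n$) is in fact false. By \tref{t:bet1cyc12}, take $n=9$, $\beta$ a $9$-cycle and $\gamma$ of type $6\tdot2\tdot1$; then $(\id,\beta,\gamma;(12))\in\ptp(9)$, and it appears in \Tref{T:cat12part1}. Here $3$ divides only the cycle length $c_0=6>n/2$, $R=[9]$, and the symbols of the $2$-cycle and the fixed point necessarily lie in $R\times R$.

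The same theorem gives $\gamma$ of type $(2\cdot5^k)\tdot2^{(5^k-1)/2}\tdot1$ for $n=3\cdot5^k$. There $p^e=5^k$ and $c_0=2n/3$ are both linear in $n$, so no largeness assumption on $p^e$ or $c_0$ rescues the counting. Separately, your probabilistic lemma is only asserted, not proved. Also, the exploratory claim that the longest cycle of a random $\gamma$ exceeds $n/2$ holds only with probability tending to $\log 2$, not $1$.

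The idea you are missing is to pass to an autotopism and use the order of $\gamma$ rather than its fine cycle structure. The paper observes that $\sigma^2=(\alpha\beta,\beta\alpha,\gamma^2;\id)$ is an autotopism. By \cite[Thm 2]{MWZ15}, the order of $\gamma^2$ is then at most $n^2/4$, so $\gamma$ has order at most $n^2/2$. By Erd\H{o}s--Tur\'an, almost all $\gamma\in\sym_n$ have order $n^{(1/2+o(1))\log n}$, which is superpolynomial, and the theorem follows. This route never needs to locate symbols inside $L$.
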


\begin{proof}
If $\sigma=(\alpha,\beta,\gamma;(12))\in\ptp(n)$ then
$\sigma^2=(\alpha\beta,\beta\alpha,\gamma^2;\id)\in\ptp(n)$. In turn
this implies that $\gamma^2$ has order at most $n^2/4$, by \cite[Thm 2]{MWZ15}, 
so $\gamma$ has order at most $n^2/2$. However by \cite{ET67}, almost all
$\gamma\in\sym_n$ have order at least $n^{(1/2+o(1))\log n}$, from which the
result follows.
\end{proof}

\begin{coro}
  Almost all $\sigma\in\grp_n$ satisfy $\sigma\notin\ptp(n)$.
\end{coro}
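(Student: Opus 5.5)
The plan is to partition $\grp_n$ according to the third coordinate $\delta\in\sym_3$ and show that each of the six cosets contributes a proportion $o(1)$ of its members to $\ptp(n)$. Since every coset $\{(\alpha,\beta,\gamma;\delta):\alpha,\beta,\gamma\in\sym_n\}$ has exactly $(n!)^3$ elements, this yields the statement. Within each coset we only need to exhibit one coordinate permutation which, when it is ``typical'', rules out membership. The remaining coordinates are then free, so the proportion of bad triples is at most the proportion of bad values of that one coordinate.

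For $\delta=\id$ we apply the quoted theorem of McKay \emph{et al.} directly. Almost every $\alpha\in\sym_n$ admits no $\beta,\gamma$ with $(\alpha,\beta,\gamma)\in\atp(n)$, so the proportion of $(\alpha,\beta,\gamma;\id)$ lying in $\ptp(n)$ tends to $0$.

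For the transpositions $\delta\in\{(12),(13),(23)\}$ we argue as follows. When $\delta=(12)$, \tref{t:asy12} shows that almost every $\gamma$ admits no $\alpha,\beta$ with $(\alpha,\beta,\gamma;(12))\in\ptp(n)$. For $(13)$ and $(23)$ we use \lref{l:conjauto} together with conjugation by an element $(\id,\id,\id;\tau)$, which permutes the coordinates. Conjugating by the appropriate $\tau$ sends $(\alpha,\beta,\gamma;(13))$ to a paratopism of the form $(\alpha',\beta',\gamma';(12))$, where $\gamma'$ is the coordinate fixed by the transposition (here $\beta$). The map $(\alpha,\beta,\gamma)\mapsto(\alpha',\beta',\gamma')$ is a bijection of $\sym_n^3$, so the proportion carries over. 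Alternatively, one can rerun the proof of \tref{t:asy12} on the fixed coordinate. Its square is an autotopism whose component on the fixed coordinate is that permutation squared, so the bound of \cite[Thm 2]{MWZ15} together with \cite{ET67} applies verbatim.

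For the 3-cycles $\delta\in\{(123),(132)\}$, note that $\sigma^3=(\alpha\beta\gamma,\beta\gamma\alpha,\gamma\alpha\beta;\id)$ (with the appropriate ordering for $(132)$) is an autotopism whenever $\sigma\in\ptp(L)$. By \cite[Thm 2]{MWZ15}, $\alpha\beta\gamma$ then has order at most $n^2/4$. For fixed $\alpha,\beta$, the map $\gamma\mapsto\alpha\beta\gamma$ is a bijection of $\sym_n$. By \cite{ET67}, the proportion of $\gamma$ for which $\alpha\beta\gamma$ has order at most $n^2/4$ is therefore $o(1)$, uniformly in $\alpha,\beta$. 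Summing over $\alpha,\beta$ gives proportion $o(1)$ in this coset as well.

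The only point needing care is the bookkeeping in the transposition and 3-cycle cases. There we must check that the coordinate change is a measure-preserving bijection on $\sym_n^3$ (composition with fixed permutations, or relabeling of coordinates), so that ``almost all'' transfers correctly. This is the step I expect to be the main obstacle, though it is routine. Adding the six cosets, each contributing $o((n!)^3)$ elements, shows that $|\ptp(n)|/|\grp_n|\to0$.
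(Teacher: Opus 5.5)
Your proposal is correct and takes essentially the paper's route: you split by $\delta$, use the theorem of McKay et al.\ for $\delta=\id$, use \tref{t:asy12} plus conjugation for the transpositions, and pass to the autotopism $\sigma^3$ for the $3$-cycles, noting that $\gamma\mapsto\alpha\beta\gamma$ is a bijection so $\alpha\beta\gamma$ is uniformly distributed. The only cosmetic difference is in the $3$-cycle case, where you combine the order bound \cite[Thm 2]{MWZ15} with \cite{ET67} directly, while the paper applies the almost-all autotopism result of \cite{MWZ15} to the uniformly distributed first component $\alpha\beta\gamma$.
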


\begin{proof}
  Let $\sigma=(\alpha,\beta,\gamma;\delta)$ be chosen uniformly at random
  from $\grp_n$. In light of \tref{t:conj},
  \tref{t:asy12} implies
  the result if $\delta$ is a $2$-cycle, and \cite{MWZ15} showed
  the case when $\delta=\id$. So it suffices to assume that $\delta=(123)$.
  If $\sigma\in\ptp(n)$ then 
  $\sigma^3=(\alpha\beta\gamma,\beta\gamma\alpha,\gamma\alpha\beta;\id)\in\atp(n)$.
  However, the cycle structure of $\alpha\beta\gamma$ has the same distribution
  as for a random permutation. Hence, by \cite{MWZ15}, the probability that
  $\sigma^3$ is an autotopism approaches $0$ as $n\rightarrow\infty$.
\end{proof}

These results contrast starkly with our final two observations:

\begin{theo}
  For all $\alpha \in \sym_n$ there exist $\beta, \gamma \in \sym_n$
  such that $\sigma=(\alpha, \beta, \gamma;(12)) \in \ptp(n)$.
\end{theo}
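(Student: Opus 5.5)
The plan is to choose $\beta=\alpha^{-1}$ and $\gamma=\id$. Then $\sigma=(\alpha,\alpha^{-1},\id;(12))$, and $\alpha\alpha^{-1}=\id$. By \tref{t:conj}, $\sigma$ is conjugate in $\grp_n$ to $(\id,\id,\id;(12))$: the cycle $(12)$ of $\delta$ has product $\alpha\beta=\id$, and the fixed point $3$ carries $\gamma=\id$, exactly as for $(\id,\id,\id;(12))$. By \lref{l:totsym}, $(\id,\id,\id;(12))\in\ptp(n)$, witnessed by any totally symmetric Latin square (for instance $L(i,j)=-i-j\bmod n$). Then \lref{l:conjauto} gives $\sigma\in\ptp(n)$. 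Equivalently, the earlier corollary characterising $(\alpha,\beta,\id;(12))\in\ptp(n)$ by $\alpha=\beta^{-1}$ yields this immediately.

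For a self-contained witness, I would construct the square directly. Take a symmetric Latin square $S$ and define $L(i,j)=S(i,j\alpha)$. Then I will check that $\sigma$ maps each triple of $L$ to a triple of $L$.

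A triple $(i,j,k)$ of $L$ maps to $(j\beta,i\alpha,k)=(j\alpha^{-1},i\alpha,k)$. We need $L(j\alpha^{-1},i\alpha)=S(j\alpha^{-1},i\alpha^2)$. This does not obviously equal $k=S(i,j\alpha)$, so the convention needs care.

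Try instead $L(i,j)=S(i\alpha,j)$. Then $L(j\alpha^{-1},i\alpha)=S(j,i\alpha)=S(i\alpha,j)=L(i,j)=k$, using the symmetry of $S$. So $\sigma$ is an autoparatopism of $L$.

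There is no real obstacle here. The only care needed is matching the right-action convention for the image $(y\beta,x\alpha,z\gamma)$, which the direct check above confirms.
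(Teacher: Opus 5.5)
Your proof is correct, but it takes a different and simpler route than the paper.

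The paper chooses $\gamma=(1\,2\cdots n)$ and $\beta=\alpha^{-1}\gamma$. Then $\alpha\beta=\gamma$, so by \tref{t:conj} $\sigma$ is conjugate to $(\id,\gamma,\gamma;(12))$. That paratopism lies in $\ptp(n)$ by \cref{cj:1cyc12} with $d=n$ and $f=0$.

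You instead choose $\beta=\alpha^{-1}$ and $\gamma=\id$. This makes $\sigma$ conjugate to the pure transposition $(\id,\id,\id;(12))$, which \lref{l:totsym} covers. As you note, this is exactly the ``if'' direction of the corollary following \tref{t:ord12}.

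Your explicit witness is also right under the paper's convention $(x,y,z)\mapsto(y\beta,x\alpha,z\gamma)$. For $L(i,j)=S(i\alpha,j)$ with $S$ symmetric, you get $L(j\alpha^{-1},i\alpha)=S(j,i\alpha)=S(i\alpha,j)=L(i,j)$. Since $\sigma$ is injective on triples and $|O(L)|=n^2$, mapping $O(L)$ into itself already gives $L^\sigma=L$.

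What each route buys:
\begin{itemize}
\item Yours needs nothing beyond the existence of a symmetric Latin square. It shows the statement follows from the trivial choice $\beta=\alpha^{-1}$, $\gamma=\id$.
\item The paper's uses the more specialised \cref{cj:1cyc12}. In return, it shows that a suitable $\gamma$ can always be taken to be a full $n$-cycle rather than the identity.
\end{itemize}

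In a final write-up you should delete the abandoned first attempt $L(i,j)=S(i,j\alpha)$.
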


\begin{proof}
  For $\alpha \in \sym_n$ take $\gamma=(1\,2\cdots n)$ and
  $\beta=\alpha^{-1}\gamma$. Then 
  $\sigma'=(\id,\alpha\beta,\gamma;(12)) \in \ptp(n)$, by \cref{cj:1cyc12}. 
  Hence $\sigma\in\ptp(n)$, since $\sigma$ and $\sigma'$ are conjugate by
  \tref{t:conj}.
\end{proof}

\begin{theo}
For all $\alpha, \beta \in \sym_n$ there exist $\gamma \in \sym_n$ such 
that $\sigma=(\alpha, \beta, \gamma;(123)) \in \ptp(n)$.
\end{theo}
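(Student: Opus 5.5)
By \tref{t:conj}, the paratopism $\sigma=(\alpha,\beta,\gamma;(123))$ is conjugate to $(\id,\id,\alpha\beta\gamma;(123))$. This is because $\delta=(123)$ is a single $3$-cycle, and the conjugacy class is governed only by the cycle structure of $\alpha\beta\gamma$. By \lref{l:conjauto}, it therefore suffices to choose $\gamma$ so that $\alpha\beta\gamma$ has a cycle structure $\mu$ with $(\id,\id,\mu;(123))\in\ptp(n)$. Since $\gamma$ is arbitrary, $\alpha\beta\gamma$ can be made equal to any permutation $\pi$ we like, by taking $\gamma=(\alpha\beta)^{-1}\pi$. So the whole task is to exhibit, for each $n$, one permutation $\pi\in\sym_n$ with $(\id,\id,\pi;(123))\in\ptp(n)$.

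The simplest choice is $\pi=\id$. By \lref{l:totsym}, $(\id,\id,\id;(123))\in\ptp(n)$ for every positive integer $n$, witnessed by the totally symmetric square $L(i,j)=-i-j \bmod n$. Hence we set $\gamma=(\alpha\beta)^{-1}=\beta^{-1}\alpha^{-1}$. Then $\alpha\beta\gamma=\id$, and \tref{t:conj} shows that $\sigma$ is conjugate to $(\id,\id,\id;(123))$; by \lref{l:conjauto}, $\sigma\in\ptp(n)$.

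To be careful about the conjugacy criterion, we check that the relevant product is invariant. The cycle of $\delta$ is $(1\,2\,3)$ and the product is $\alpha_1\alpha_2\alpha_3=\alpha\beta\gamma=\id$. Starting the cycle at a different point gives a cyclic rotation, such as $\beta\gamma\alpha$, which is conjugate to $\alpha\beta\gamma$ and so also equals $\id$. The bijection $\eta$ in \tref{t:conj} thus matches the unique $3$-cycle with itself, and the products have equal cycle structure $1^n$. There is no genuine obstacle here. The only point needing attention is verifying that \tref{t:conj} applies irrespective of the starting point chosen in the cycle, which the conjugacy of cyclic rotations settles.
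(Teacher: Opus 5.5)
Your proposal is correct and follows essentially the same route as the paper. You choose $\gamma=(\alpha\beta)^{-1}$ so that $\alpha\beta\gamma=\id$, use \tref{t:conj} to see that $\sigma$ is conjugate to $(\id,\id,\id;(123))$, and conclude via \lref{l:totsym} together with \lref{l:conjauto}.
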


\begin{proof}
For $\alpha,\beta\in\sym_n$ take $\gamma=(\alpha\beta)^{-1}$. Then
 $\alpha\beta\gamma=\id$ and hence $\sigma=(\alpha, \beta,
 \gamma;(123))$ and $\sigma'=(\id, \id, \id;(123))$ are
 conjugate. Now apply \lref{l:totsym}.
\end{proof}

  \let\oldthebibliography=\thebibliography
  \let\endoldthebibliography=\endthebibliography
  \renewenvironment{thebibliography}[1]{%
    \begin{oldthebibliography}{#1}%
      \setlength{\parskip}{0.4ex plus 0.1ex minus 0.1ex}%
      \setlength{\itemsep}{0.4ex plus 0.1ex minus 0.1ex}%
  }%
  {%
    \end{oldthebibliography}%
  }

\end{document}